\newcommand{\cJ}{\mathcal{J}}
\newcommand{\cO}{\mathcal{O}}
\newcommand{\cS}{\mathcal{S}}
\newcommand{\cU}{\mathcal{U}}
\newcommand{\bF}{\mathbb{F}}
\newcommand{\bQ}{\mathbb{Q}}\newcommand{\bR}{\mathbb{R}}
\newcommand{\bZ}{\mathbb{Z}}
\newcommand*\circled[1]{\tikz[baseline=(char.base)]{
		\node[shape=circle,draw,inner sep=0.5pt] (char) {#1};}}
   \def\MR#1{}}
\newtheorem{theorem}{Theorem}[section]
\newtheorem{lemma}[theorem]{Lemma}
\newtheorem{proposition}[theorem]{Proposition} 
\newtheorem{corollary}[theorem]{Corollary}
\newtheorem*{corollary*}{Corollary}
\newtheorem{atheorem}{Theorem}
\newtheorem{acorollary}[atheorem]{Corollary}
\theoremstyle{definition}
\newtheorem{convention}[theorem]{Convention}
\newtheorem*{convention*}{Convention}
\theoremstyle{remark}
\newtheorem{example}[theorem]{Example} 
\newtheorem{remark}[theorem]{Remark}
\newtheorem*{remark*}{Remark}
\newtheorem{question}[theorem]{Question}
\newcommand{\cat}[1]{\mathsf{#1}}
\newcommand{\mr}[1]{{\rm #1}}
\newcommand{\ul}[1]{{\underline{#1}}}
\newcommand{\ra}{\rightarrow}
\newcommand{\lra}{\longrightarrow}
\newcommand{\xlra}[1]{\overset{#1}{\longrightarrow}}
\newcommand{\loc}{\mr{loc}}
\newcommand{\FM}{\mr{FM}}
\newcommand{\Diff}{\mr{Diff}}
\newcommand{\hAut}{\mr{hAut}}
\newcommand{\Map}{\mr{Map}}
\newcommand{\Emb}{\mr{Emb}}
\newcommand{\Bun}{\mr{Bun}}
\newcommand{\Man}{\cat{Man}}
\newcommand*\bigcdot{\mathpalette\bigcdot@{.7}}
\newcommand*\bigcdot@[2]{\mathbin{\vcenter{\hbox{\scalebox{#2}{$\m@th#1\bullet$}}}}}
\tikzset{
  symbol/.style={
    draw=none,
    every to/.append style={
      edge node={node [auto=false]{$#1$}}}
  }
}
\DeclareMathOperator{\holim}{holim}
\DeclareMathOperator{\hofib}{hofib}
\title{Embedding calculus for surfaces}
\author{Manuel Krannich}
\address{Department of Mathematics, Karlsruhe Institute of Technology, 76131 Karlsruhe, Germany}
\email{krannich@kit.edu}
\author{Alexander Kupers}
\address{Department of Computer and Mathematical Sciences, University of Toronto Scarborough, 1265 Military Trail, Toronto, ON M1C 1A4, Canada}
\email{a.kupers@utoronto.ca}
\begin{document}

\begin{abstract}
We prove convergence of Goodwillie--Weiss' embedding calculus for spaces of embeddings into a manifold of dimension at most two, so in particular for diffeomorphisms between surfaces. We also relate the Johnson filtration of the mapping class group of a surface to a certain filtration arising from embedding calculus.
\end{abstract}

\maketitle 

\tableofcontents

\section{Introduction}
For smooth manifolds $M$ and $N$ and an embedding $e_\partial \colon \partial M \hookrightarrow \partial N$, we write $\Emb_\partial(M,N)$ for the space of embeddings that agree with $e_\partial$ on $\partial M$, equipped with the smooth topology. Embedding calculus \`a la Goodwillie and Weiss provides a space $T_\infty \Emb_\partial(M,N)$ and a map
\begin{equation}\label{equ:emb-calc-approximation}
\Emb_\partial(M,N) \lra T_\infty \Emb_\partial(M,N),
\end{equation}
which approximates the space of embeddings through restrictions to subsets diffeomorphic to a finite collection of open discs and a collar. The space $T_\infty \Emb_\partial(M,N)$ arises as a homotopy limit of a tower of maps whose homotopy fibres have an explicit description in terms of configuration spaces of $M$ and $N$ \cite{WeissImmersion,WeissImmersionErratum}, so its homotopy type is sometimes easier to study than that of $\Emb_\partial(M,N)$. The main result in this context is due to Goodwillie, Klein, and Weiss \cite{GoodwillieWeiss,GoodwillieKlein} and says that if the difference of the dimension of $N$ and the relative handle dimension of the boundary inclusion $\partial M\subset M$ is at least three, then embedding calculus \emph{converges} in the sense that \eqref{equ:emb-calc-approximation} is a weak homotopy equivalence. If this assumption is not met, little is known about for which choices of $M$ and $N$ embedding calculus converges (but see \cref{rem:intro-remark} \ref{rem:disc-structure-space} and \ref{rem:convergence} below).

\subsection{Convergence in low dimensions}
In the first part of this work, we study \eqref{equ:emb-calc-approximation} when the target $N$ has dimension at most two. Our main result shows that embedding calculus always converges under this assumption, even though the assumption on the handle codimension is not satisfied.

\begin{atheorem}\label{athm:main-general} For compact manifolds $M$ and $N$ with $\dim(N)\le 2$, the map
	\[\Emb_\partial(M,N) \lra \ T_\infty \Emb_\partial(M,N)\]
	is a weak homotopy equivalence for any embedding $e_\partial\colon \partial M\hookrightarrow \partial N$.
\end{atheorem}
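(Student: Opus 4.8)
The plan is to induct over a handle decomposition of $M$, use isotopy extension to reduce the inductive step to the case of a single handle, and settle that case by hand. The key geometric input, which replaces the missing codimension-$\ge 3$ hypothesis of Goodwillie--Klein--Weiss, is that for a target of dimension at most two every configuration space occurring in the Taylor tower is aspherical.

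\emph{Reductions.} An embedding sends components of $M$ into components of $N$, so $\Emb_\partial(M,N)$ splits as a coproduct indexed by the corresponding combinatorial data and $T_\infty\Emb_\partial$ respects this splitting; since moreover the space is empty once $\dim M>\dim N$, we may assume $N$ connected with $\dim N\ge\dim M$. Fix a handle decomposition of $M$ relative to a collar of the part $\partial_0 M\subseteq\partial M$ carrying $e_\partial$, all handles of index at most $\dim M\le 2$, and let $M_j$ be the union of that collar with the first $j$ handles; we induct on the number of handles. The base case $j=0$ is the (contractible) space of collar extensions of $e_\partial$, with likewise contractible embedding calculus tower. For the step from $M_j$ to $M_{j+1}=M_j\cup H$, the isotopy extension theorem identifies the homotopy fibre of the restriction map $\Emb_\partial(M_{j+1},N)\to\Emb_\partial(M_j,N)$ over an embedding $e$ with
\[\Emb_{\partial'}\bigl(H,\;N\setminus e(\mathrm{int}\,M_j)\bigr),\]
where $H\cong D^{\dim M}$ is the handle, $\partial'\subseteq\partial H$ its attaching region, and the complement is again a manifold of dimension at most two (now possibly open, which we treat by the compact exhaustions that embedding calculus respects). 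By the compatibility of the embedding calculus tower with such restriction fibrations (Goodwillie--Klein; Boavida--Weiss), the identical homotopy fibre sequence holds after applying $T_\infty\Emb_\partial$, so comparing the two via the natural transformation shows that the comparison map for $M_{j+1}$ is an equivalence as soon as it is for $M_j$ and for $H$. The whole problem thus reduces to the case that $M$ is a single handle, i.e.\ a disc $D^i\times D^{2-i}$ with $i\le 2$, embedded rel its attaching region into a connected manifold of dimension at most two.

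\emph{The base cases.} Here the handle codimension is $0$ or $1$, so the Goodwillie--Klein connectivity estimates for the layers of the tower are vacuous, and one instead uses the Boavida--Weiss presentation of $T_\infty\Emb_\partial(-,N)$ as a space of derived maps between presheaves built from the configuration spaces of $N$. Since $\dim N\le 2$, every such configuration space is aspherical --- a surface or one-manifold braid group --- so the tower computing $T_\infty\Emb_\partial$ is up to homotopy a tower of groupoids whose homotopy limit can be read off explicitly. This is then matched against the actual homotopy type of the base-case embedding space, which for a disc in a surface is directly accessible (it deformation retracts onto a bundle of tangential data over the surface, and for an arc rel endpoints onto a fibre of the universal cover), the finitely many exceptional surfaces whose diffeomorphism groups fail to be aspherical --- the disc, annulus, Möbius band, $S^2$, $\bR\bP^2$, torus, Klein bottle --- being handled individually.

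\emph{Main obstacle.} The heart of the matter is exactly this final comparison: one must show that the inverse limit of the (aspherical) Taylor tower retains all isotopy information about embedded discs and arcs in a surface, despite the codimension being $0$ or $1$ --- the situation in which embedding calculus normally fails. Asphericity of surface configuration spaces is the new input that makes this feasible at all, and the real work is the bookkeeping of the resulting tower of groupoids, i.e.\ pinning down $\pi_0$ and $\pi_1$ of $T_\infty\Emb_\partial$ and identifying them with those of $\Emb_\partial$ --- which is also where the connection to the mapping class group and its Johnson filtration enters --- together with the ad hoc treatment of the exceptional low-complexity surfaces.
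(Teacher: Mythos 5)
Your inductive skeleton --- reduce via isotopy extension to the case where $M$ is a single handle, then handle the discs/arcs directly --- is indeed the shape of the paper's argument, and the reductions (splitting over components, killing the case $\dim M > \dim N$ via $\Bun_\partial$, convergence on collars) all appear there too. However, the proposal stops exactly where the real difficulty begins, and the mechanism you propose for the base case does not work.

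\textbf{The gap.} You reduce to proving convergence for a disc or arc embedded rel its attaching region in a surface, and then assert that since the relevant configuration spaces are aspherical, the Taylor tower is ``up to homotopy a tower of groupoids whose homotopy limit can be read off explicitly.'' This is both overstated and unsupported. First, configuration spaces of $S^2$ and $\bR P^2$ are not aspherical; you flag these as exceptions but the claim as written is still used. Second, and more fundamentally, asphericity of configuration spaces of $N$ does not make the stages $T_k\Emb_\partial(M,N)$ aspherical: the layers of the tower are spaces of sections whose fibres are total homotopy fibres of cubes of configuration spaces, and total homotopy fibres of cubes of $K(\pi,1)$'s have higher homotopy in general. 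Nothing in the proposal controls $\pi_0$ or $\pi_1$ of $T_\infty\Emb_\partial(I,N)$ rel endpoints --- which you yourself identify as ``the real work'' and then leave undone. Identifying this homotopy limit is precisely the theorem, and asphericity alone does not deliver it.

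\textbf{What actually closes the gap.} The paper's key input for the hard base case (an arc with both endpoints on the \emph{same} boundary component of $N$) is a covering-space argument, not asphericity. One first establishes a lifting lemma showing that the map $\Emb_\partial(I,N)_\alpha \to \Emb_\partial(I,\widetilde N)_{\tilde\alpha}$ along a covering $\widetilde N \to N$ descends to $T_\infty$; this uses that $\Map_\partial(-,-)$ is a homotopy $\cJ_1$-sheaf, so covering-space path lifting becomes visible to the presheaf machinery. Then one attaches a $1$-handle to $N$ to produce $P$ in which the two endpoints lie on distinct boundary components (for which convergence is easy, via gluing a disc and isotopy extension), and constructs a cover $\widetilde P \to P$ plus an embedding $e \colon \widetilde P \hookrightarrow N$ so that $T_\infty\Emb_\partial(I,N)_\alpha$ is exhibited as a \emph{retract} of the contractible space $T_\infty\Emb_\partial(I,P)_\alpha$. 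Gramain's theorem (contractibility of components of the arc space) and the bigon criterion enter to identify $\pi_0$. None of this is present in the proposal, and it is not a bookkeeping exercise one can defer.

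\textbf{A secondary but real issue.} Isotopy extension for $T_\infty$ (Knudsen--Kupers, and the paper's Property (f)) is \emph{not} unconditional: it requires knowing convergence already for triads of the form $P \sqcup T\times\bR^d \hookrightarrow N$ where $P$ is the sub-handle you are restricting to. Your induction therefore threatens circularity unless the order of cases is arranged so that this hypothesis is always available when invoked; the paper is careful about this (Lemmas on thickenings and adding discs/collars, and the specific ordering of its Steps 2.1--2.9). The proposal cites ``compatibility of the tower with restriction fibrations'' as if it were free, which glosses over the main bookkeeping that makes the induction actually run.
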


Perhaps the most interesting (hence eponymous) instance of \cref{athm:main-general} is when $M=N$ is a surface $\Sigma$ and $e_\partial=\mathrm{id}_{\partial \Sigma}$. In this case \cref{athm:main-general} specialises to the following:

\begin{acorollary}\label{acor:main-diff}For a compact surface $\Sigma$, possibly with boundary and non-orientable, the map
	\[\Diff_\partial(\Sigma) \lra \ T_\infty \Emb_\partial(\Sigma,\Sigma)\]
	is a weak homotopy equivalence.
\end{acorollary}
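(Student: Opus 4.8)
The plan is to deduce \cref{acor:main-diff} from \cref{athm:main-general} by showing that the inclusion $\Diff_\partial(\Sigma)\hookrightarrow\Emb_\partial(\Sigma,\Sigma)$ is a weak homotopy equivalence, so that the corollary follows by composing with the convergence map of \cref{athm:main-general} applied to $M=N=\Sigma$, $e_\partial=\mathrm{id}$. Since $\Sigma$ is compact, any embedding $\Sigma\hookrightarrow\Sigma$ that is the identity on the boundary is automatically a diffeomorphism: such an embedding has open image, and by invariance of domain together with properness (compactness of $\Sigma$) the image is also closed, hence all of $\Sigma$ since $\Sigma$ is connected component by component and each component is hit (a component mapping into a proper open subset of another component would force a second component to receive nothing, contradicting injectivity when the two components are diffeomorphic; one handles the general combinatorics of components by a counting/degree argument). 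Thus set-theoretically $\Diff_\partial(\Sigma)$ and $\Emb_\partial(\Sigma,\Sigma)$ have the same points.

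The remaining point is that the two topologies agree, i.e.\ that $\Diff_\partial(\Sigma)\hookrightarrow\Emb_\partial(\Sigma,\Sigma)$ is not merely a bijection but a homeomorphism (or at least a weak equivalence). Here I would invoke the standard fact that on the space of surjective embeddings of a compact manifold into itself, the subspace topology from $\Emb_\partial$ coincides with the $\Diff_\partial$ topology, because inversion $f\mapsto f^{-1}$ is continuous in the $C^\infty$ topology on this locus (the inverse function theorem gives smooth dependence of $f^{-1}$ on $f$, and one checks continuity of all derivatives, e.g.\ via the implicit function theorem applied fibrewise or by a local chart argument). Concretely, $\Diff_\partial(\Sigma)$ is an open subset of $\Emb_\partial(\Sigma,\Sigma)$ — openness because the condition of being a diffeomorphism is detected by the (open) condition that the derivative is everywhere an isomorphism together with surjectivity, which is open among proper maps — and we have just argued it is also all of $\Emb_\partial(\Sigma,\Sigma)$, so the inclusion is the identity on an equal space with equal topology.

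Putting these together: $\Emb_\partial(\Sigma,\Sigma)=\Diff_\partial(\Sigma)$ as topological spaces, and \cref{athm:main-general} gives that $\Emb_\partial(\Sigma,\Sigma)\to T_\infty\Emb_\partial(\Sigma,\Sigma)$ is a weak homotopy equivalence; substituting yields the corollary. I expect the main obstacle — though it is entirely routine — to be pinning down the identification of topologies and the component bookkeeping: one must be slightly careful that an injective immersion of a compact surface into itself, constant on the boundary, cannot permute components in a way that shrinks volume, which is where compactness and the constancy on $\partial\Sigma$ (forcing the relevant boundary components to match up) are used. No genuinely new ideas beyond \cref{athm:main-general} are needed; the corollary is a formal consequence once this elementary identification $\Emb_\partial(\Sigma,\Sigma)\simeq\Diff_\partial(\Sigma)$ is recorded.
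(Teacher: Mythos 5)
Your approach is the same as the paper's: \cref{acor:main-diff} is obtained by specialising \cref{athm:main-general} to $M=N=\Sigma$, $e_\partial=\mathrm{id}_{\partial\Sigma}$, together with the tautological identification $\Diff_\partial(\Sigma)=\Emb_\partial(\Sigma,\Sigma)$. Your worries about topologies and component bookkeeping are unnecessary, though: both spaces are by definition subspaces of the same space of smooth maps, and for any such embedding the image of each compact connected component is open (invariance of domain in codimension zero), closed (compactness), and connected, hence a full component of $\Sigma$; injectivity then forces the induced map on the finite set of components to be a bijection, so the embedding is a diffeomorphism.
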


\begin{remark}\label{rem:intro-remark}\ 
\begin{enumerate}[label=(\roman*),leftmargin=*]
\item We prove \cref{athm:main-general} as a special case of a more general result that also treats embeddings spaces of \emph{triads} (see \cref{thm:main-general-triads}).
\item \label{rem:disc-structure-space}\cref{athm:main-general} is special to dimension at most $2$: in \cite{KrannichKupersConvergence}, we show that this results fails for $N=D^3$ and for most high-dimensional compact manifolds $N$. In the language of that paper, \cref{athm:main-general} implies that the smooth $\cat{Disc}$-structure space $\cS_\partial^{\cat{Disc}}(N)$ is contractible if $\dim(N)\le 2$.
\item The proof of \cref{athm:main-general} does not rely on Goodwillie, Klein, and Weiss' convergence results.
\item \cref{athm:main-general} is stronger than \cref{acor:main-diff}, even if $\dim(M)=\dim(N)=2$. It implies that $T_\infty\Emb_\partial(\Sigma,\Sigma')=\varnothing$ if $\Sigma$ and $\Sigma'$ are connected compact surfaces that are not diffeomorphic.
\item Composition induces an $E_1$-structure on $T_\infty \Emb_\partial(M,M)$ with respect to which the map $\Emb_\partial(M,M)\ra T_\infty \Emb_\partial(M,M)$ is an $E_1$-map. For a compact manifold $M$, the $E_1$-space $\Emb_\partial(M,M)=\Diff_\partial(M)$ is grouplike, but it is not known whether the same holds for $T_\infty \Emb_\partial(M,M)$. \cref{athm:main-general} implies that this is the case if $\dim(M)\le 2$.
\item \label{rem:convergence}\cref{athm:main-general} provide a class of examples for which the map $\Emb_\partial(M,N)\ra T_\infty\Emb(M,N)$ is a weak equivalence in handle codimension less than three. A few examples of this form were known before; see \cite[Theorem C, Section 6.2.4]{KnudsenKupers}. In contrast, there are some cases for which it is known that embedding calculus does not converge, such as for $M=D^1$ and $N=D^3$ by an argument due to Goodwillie.
\end{enumerate}
\end{remark}

\subsection{Embedding calculus and the Johnson filtration}\label{sec:intro-johnson}The \emph{Johnson filtration} \[\pi_0\,\Diff_\partial(\Sigma)=\mathcal{J}(0)\supset\mathcal{J}(1)\supset\mathcal{J}(2)\supset\cdots \] of the mapping class group $\pi_0\,\Diff_\partial(\Sigma)$ of an orientable surface $\Sigma$ of genus $g$ with one boundary component is the filtration by the kernels of the action of  $\pi_0\,\Diff_\partial(\Sigma)$ on the quotients of the fundamental group $\pi_1(\Sigma,*)$ based at the point in the boundary, by the constituents of its lower central series. By work of Moriyama \cite{Moriyama}, this filtration can be recovered from the action of $\pi_0\,\Diff_\partial(\Sigma)$ on the compactly supported cohomology of the configuration spaces of the punctured surface $\Sigma\backslash \{*\}$. It is reasonable to expect a relationship between the Johnson filtration and embedding calculus, as the latter may be viewed as the study of embeddings via their induced maps between the homotopy types of configuration spaces of thickened points in source and target.

The second part of this work serves to establish one such a relationship: we introduce a filtration 
\begin{equation}\label{equ:filtration-embcalc}\pi_0\,\Diff_\partial(\Sigma)=T\mathcal{J}_{\nicefrac{\partial}{2}}^{H\mathbb{Z}}(0)\supset T\mathcal{J}_{\nicefrac{\partial}{2}}^{H\mathbb{Z}}(1)\supset T\mathcal{J}_{\nicefrac{\partial}{2}}^{H\mathbb{Z}}(2)\supset\cdots \end{equation}
arising from the cardinality filtration of embedding calculus in $H\mathbb{Z}$-modules applied to the space of self-embeddings fixed on an interval in the boundary (see \cref{sec:johnson} for precise definitions), and we use \cite{Moriyama} to show that this filtration contained in the Johnson filtration:
\[T\mathcal{J}_{\nicefrac{\partial}{2}}^{H\mathbb{Z}}(k)\subset\mathcal{J}(k)\quad\text{for }k\ge0.\]

\subsection*{Acknowledgements}
MK was partially supported by the European Research Council (ERC) under the European Union’s Horizon 2020 research and innovation programme (grant agreement No. 756444), and partially funded by the Deutsche Forschungsgemeinschaft (DFG, German Research Foundation) under Germany's Excellence Strategy EXC 2044 –390685587, Mathematics Münster: Dynamics–Geometry–Structure.AK acknowledges the support of the Natural Sciences and Engineering Research Council of Canada (NSERC) [funding reference number 512156 and 512250], as well as the Research Competitiveness Fund of the University of Toronto at Scarborough. AK is supported by an Alfred J.~Sloan Research Fellowship.

\section{Generalities on spaces of embeddings and embedding calculus}\label{sec:emb-calc}
We begin by fixing some conventions on spaces of embeddings, followed by recalling various known properties of embedding calculus and complementing them with some new properties such as a lemma for lifting embeddings along covering spaces in the context of embedding calculus.

\subsection{Spaces of embeddings and maps}\label{sect:triads} All our manifolds will be smooth and may be noncompact, disconnected, or nonorientable. A \emph{manifold triad} is a manifold $M$ together with a decomposition of its boundary $\partial M=\partial_0M\cup\partial_1M$ into two codimension zero submanifolds that intersect at a set $\partial(\partial_0M)=\partial(\partial_1M)$ of corners. Any of these sets may be empty or disconnected. If this decomposition is not specified, we implicitly take $\partial_0M=\partial M$ and $\partial_1M=\varnothing$. 

When studying embeddings between manifolds triads $M$ and $N$, we always fix a \emph{boundary condition}, i.e.~an embedding $e_{\partial_0} \colon \partial_0 M \hookrightarrow \partial_0 N$, and only consider embeddings $e\colon M\hookrightarrow N$ that restrict to $e_{\partial_0}$ on $\partial_0M$ and have near $\partial_0M$ the form $e_{\partial_0}\times \mr{id}_\mr{[0,1)}\colon \partial_0M\times [0,1)\hookrightarrow \partial_0N\times [0,1)$ with respect to collars of $\partial_0M$ and $\partial_0N$. We denote the space of such embeddings in the weak $\mathcal{C}^\infty$-topology by $\Emb_{\partial_0}(M,N)$. We replace the subscript $\partial_0$ by $\partial$ to indicate if $\partial_0M=\partial M$, and drop the subscript if we want to emphasise if $\partial_0M=\varnothing$ holds. As a final piece of notation, given manifold triads $M$ and $L$, we consider $M\sqcup L$ as manifold triad via $\partial_0(M\sqcup L)=\partial_0M\sqcup \partial_0L$. 

Similarly, we also consider the space of bundle maps $\Bun_{\partial_0}(TM,TN)$. By this we mean the space of fibrewise injective linear maps $TM\ra TN$ that restrict to the derivative $d(e_{\partial_0})$ on $T{\partial_0M}$, in the compact-open topology. Taking derivatives induces a map $\Emb_{\partial_0}(M,N)\ra \Bun_{\partial_0}(TM,TN)$ which we may postcompose with forgetful map $ \Bun_{\partial_0}(TM,TN) \to \Map_{\partial_0}(M,N)$ to the space of continuous maps extending $e_{\partial_0}$, equipped with the compact-open topology.

\subsection{Manifold calculus}\label{sec:manifold-calc}
Given manifold triads $M$ and $N$ and a boundary condition $e_{\partial_0}\colon \partial_0M\hookrightarrow \partial_0N$ as above, Goodwillie--Weiss' \emph{embedding calculus} \cite{WeissImmersion, GoodwillieWeiss} gives a space $T_\infty \Emb_{\partial_0}(M,N)$ (or rather, a homotopy type) together with a map
\begin{equation}\label{eqn:emb-calc-map} 
		\Emb_{\partial_0}(M,N) \lra T_\infty \Emb_{\partial_0}(M,N).
\end{equation}
Embedding calculus \emph{converges} if the map \eqref{eqn:emb-calc-map} is a weak homotopy equivalence (shortened to \emph{weak equivalence} throughout this work). This fits into the more general context of \emph{manifold calculus}, and we shall need this generalisation at several places.

\subsubsection{Manifold calculus in terms of presheaves}\label{sec:presheaf-model}
Among the various models for the map \eqref{eqn:emb-calc-map} and manifold calculus in general, that of Boavida de Brito and Weiss in terms of presheaves \cite{BoavidaWeissSheaves} is most convenient for our purposes. We refer to Section 8 of their work for a proof of the equivalence between this model and the classical model of \cite{WeissImmersion}.

To recall their model (in a slightly more general setting, see \cref{rem:triad-generalisation}), we fix a $(d-1)$-manifold $K$ possibly with boundary, thought of $\partial_0M$ for manifold triads $M$. We write $\cat{Disc}_{K}$ for the topologically enriched category whose objects are smooth $d$-dimensional manifold triads that are diffeomorphic (as triads) to $K\times [0,1)\sqcup T\times \bR^d$ for a finite set $T$ with $\partial_0(K\times [0,1)\sqcup T\times \bR^d)=K\times \{0\}$, and whose morphisms are given by spaces of embeddings of triads as described in \cref{sect:triads}. If $K$ is clear from the context, we abbreviate $\cat{Disc}_{K}$ by $\cat{Disc}_{\partial_0}$.

We write $\cat{PSh}(\cat{Disc}_{\partial_0})$ for the topologically enriched category of space-valued enriched presheaves on $\cat{Disc}_{\partial_0}$, and we consider it as a category with weak equivalences by declaring a morphism of presheaves to be a weak equivalence if it is a weak equivalence on all its values. Localising at these weak equivalences (for instance as described in \cite{DwyerKan}) gives rise to a topologically enriched category $\cat{PSh}(\cat{Disc}_{\partial_0})^{\loc}$ together with an enriched functor 
\begin{equation}\label{equ:localisation}
	\cat{PSh}(\cat{Disc}_{\partial_0})\lra \cat{PSh}(\cat{Disc}_{\partial_0})^{\loc}.
\end{equation}
Denoting by $\cat{Man}_{\partial_0}$ the topologically enriched category with objects all manifold triads $M$ with an identification $\partial_0M\cong K$ and morphism spaces the spaces of embeddings of triads, a presheaf $F\in \cat{PSh}(\cat{Disc}_{\partial_0})$ induces a new presheaf $T_\infty F\in \cat{PSh}(\cat{Man}_{\partial_0})$ by setting
	\[T_\infty F(M)\coloneqq \Map_{\cat{PSh}(\cat{Disc}_{\partial_0})^{\loc}}\big(\Emb_{\partial_0}(-,M),F \big).\]
If $F$ is the restriction of a presheaf $F\in \cat{PSh}(\cat{Man}_{\partial_0})$, then we have a composition of maps of presheaves 
\begin{equation}\label{equ:manifold-calculus-taylor}
	F(M)\xlra{\cong} \Map_{\cat{PSh}(\cat{Man}_{\partial_0})}\big(\Emb_{\partial_0}(-,M),F \big)\lra T_\infty F(M)
\end{equation}
on $\cat{Man}_{\partial_0}$ where the first map is given by the enriched Yoneda lemma and the second is induced by the restriction along $\cat{Disc}_{\partial_0}\subset \cat{Man}_{\partial_0}$ and the functor \eqref{equ:localisation}. Note that this is a weak equivalence whenever $M\in\cat{Disc}_{\partial_0}$, that is, manifold calculus \emph{converges} on manifolds diffeomorphic to the disjoint union of a collar on $\partial_0M$ and a finite number of open discs.

\begin{example}[Embedding calculus]\label{ex:emb-calc}For triads $M$ and $N$ and a boundary condition $e_{\partial_0}\colon \partial_0M\hookrightarrow \partial_0N$, we have a presheaf $\Emb_{\partial_0}(-,N)$ of embeddings of triads extending $e_{\partial_0}$. Choosing $K=\partial_0M$, the map \eqref{equ:manifold-calculus} gives rise to a model for the embedding calculus map \eqref{eqn:emb-calc-map},
\begin{equation}\label{equ:embedding-calculus-taylor}
\Emb_{\partial_0}(M,N)\lra \Map_{\cat{PSh}(\cat{Disc}_{\partial_0})^{\loc}}\big(\Emb_{\partial_0}(-,M),\Emb_{\partial_0}(-,N) \big)= T_\infty\Emb_{\partial_0}(M,N).
\end{equation}
\end{example}

\begin{remark}\label{rem:alternative-models}There are several alternative points of view on the maps \eqref{equ:manifold-calculus-taylor} and \eqref{equ:embedding-calculus-taylor}, for instance in terms of modules over variants of the little discs operad (see \cite[Section\,6]{BoavidaWeissSheaves} or \cite{Turchin}).
\end{remark}

\subsubsection{A smaller model}\label{sec:smaller-model} In some situations, it is convenient to replace $\cat{Disc}_{\partial_0}$ by a smaller equivalent category. There is a chain of enriched functors
\begin{equation}\label{equ:smaller-cats}
	\cat{Disc}_{\partial_0}^{\bigcdot} \lra \cat{Disc}_{\partial_0}^{\mr{sk}} \lra \cat{Disc}_{\partial_0}.
\end{equation}
The right arrow is the inclusion of the full subcategory $\cat{Disc}_{\partial_0}^{\mr{sk}}\subset \cat{Disc}_{\partial_0}$ on the objects $\partial_0M\times[0,1)\sqcup\underline{n}\times \bR^d$ for $\underline{n}=\{1,\ldots,n\}$ with $n\ge0$. The category $\cat{Disc}_{\partial_0}^{\bigcdot}$ has the same objects as $\cat{Disc}_{\partial_0}^{\mr{sk}}$ and space of morphisms pairs $(s,e)$ of a parameter $s\in(0,1]$ and an embedding of triads 
\[e\colon \partial_0M\times[0,1)\sqcup\underline{n}\times \bR^d\ra \partial_0M\times[0,1)\sqcup\underline{m}\times \bR^d\]
with $e|_{\partial_0M\times[0,1)}=\mr{id}_{\partial_0M}\times {s\cdot(-)}$, where $s\cdot(-)\colon [0,1)\ra [0,1)$ is multiplication by $s$. Composition is given by composing embeddings and multiplying parameters, and the functor to $\cat{Disc}_{\partial_0}^{\mr{sk}}$ forgets the parameters. Both functors in \eqref{equ:smaller-cats} are Dwyer--Kan equivalences, the first by a variant of the proof of the contractibility of the space of collars and the second by definition, so we may equivalently define $T_\infty F(-)$ using any of the three categories \eqref{equ:smaller-cats}.

\subsubsection{Two properties of manifold calculus}\label{sec:formal-properties-mfd} The following two properties of the functor
\begin{equation}\label{equ:manifold-calculus}\cat{PSh}(\cat{Disc}_{\partial_0})\ni F\longmapsto T_\infty F\in \cat{PSh}(\cat{Man}_{\partial_0})\end{equation}
will be of use:

\begin{enumerate}[label=(\alph*),leftmargin=*]
	\item \label{homotopy-limits} \emph{Homotopy limits:} The mapping spaces resulting from the localisation \eqref{equ:localisation} can be viewed equivalently as the derived mapping spaces formed with respect to the projective model structure on $\cat{PSh}(\cat{Disc}_{\partial_0})$ (see \cite[Section 3.1]{BoavidaWeissSheaves}). That is, the functor \eqref{equ:manifold-calculus} models the homotopy right Kan-extension along the inclusion $\cat{Disc}_{\partial_0}\subset \cat{Man}_{\partial_0}$ \cite[Section 4.2]{BoavidaWeissSheaves}. The functor \eqref{equ:manifold-calculus} thus preserves homotopy limits in the projective model structures, which are computed objectwise.
	\item\label{descent} \emph{$\cJ_\infty$-covers and descent:} If $F$ is the restriction of a presheaf $F \in \cat{PSh}(\cat{Man}_{\partial_0})$ then $T_\infty F$ can be seen alternatively as the \emph{homotopy $\cJ_\infty$-sheafification} of $F$: for $1 \leq k \leq \infty$ (we will only use the cases $k=1,\infty$), an open cover $\cU$ of a triad $M$ is called a \emph{Weiss $k$-cover} if every $U \in \cU$ contains an open collar on $\partial_0 M$ and every finite subset of cardinality $\leq k$ of $\mr{int}(M)$ is contained in some element of $\cU$. An enriched presheaf on $\cat{Man}_{\partial_0}$ is a \emph{homotopy $\cJ_k$-sheaf} if it satisfies descent for Weiss $k$-covers in sense of \cite[Definition 2.2]{BoavidaWeissSheaves}. Note that a homotopy $\cJ_1$-sheaf is a homotopy sheaf in the usual sense, and a homotopy $\cJ_k$-sheaf is also a homotopy $\cJ_{k'}$-sheaf for any $k' \ge k$. By \cite[Theorem 1.2]{BoavidaWeissSheaves}, the functor
\[\cat{PSh}(\cat{Man}_{\partial_0})\ni F\longmapsto T_\infty F\in \cat{PSh}(\cat{Man}_{\partial_0})\]
together with the natural transformation $\mr{id}_{\cat{PSh}(\cat{Man}_{\partial_0})}\Rightarrow T_\infty$ is a model for the homotopy $\cJ_\infty$-sheafification. In particular, if $F$ is already a $\cJ_\infty$-sheaf, then $F\ra T_\infty F$ is a weak equivalence, so any map $F\ra G$ in $\cat{PSh}(\cat{Man}_{\partial_0})$ with $G$ a homotopy $\cJ_k$-sheaf for some $1 \leq k\le \infty$ factors over $F\ra T_\infty F$ up to weak equivalence.
		
	It is often convenient to use a stronger version of descent, namely with respect to \emph{complete} Weiss $\infty$-covers $\cU$, which are Weiss $\infty$-covers that contain a Weiss $\infty $-cover of any finite intersection of elements in $\cU$. Regarding $\cU$ as a poset ordered by inclusion, the map induced by restriction
	\[T_\infty F(M) \lra \underset{U \in \cU}\holim\, T_\infty F(U)\]
	 is a weak equivalence by \cite[Lemma 6.7]{KnudsenKupers}.
\end{enumerate}

\begin{remark}\label{rem:weak-map}At several points in the remainder of this work, we will construct maps between spaces of the form $T_\infty \Emb_{\partial_0}(M,N)$ by using the descent property from \cref{sec:formal-properties-mfd} \ref{descent}. Strictly speaking, these  will only be \emph{weak maps} i.e.\,zig-zags of maps whose wrong-way maps are weak equivalences. This will be good enough for all purposes. More formally, a weak map $X\ra Y$ gives an actual morphism from $X$ and $Y$ in the localisation of the category of spaces at the weak equivalences, and all our statements involving weak maps can be viewed as taking place in this localisation. In particular, when we say that a square involving weak maps \emph{commutes up to canonical homotopy} then we mean that the square can be enhanced in a preferred way to a homotopy commutative square in this localisation.
\end{remark}

\subsection{Properties of embedding calculus}\label{sec:formal-properties}
We explain various features of embedding calculus which illustrate that $T_\infty\Emb_{\partial_0}(M,N)$ has formally similar properties to $\Emb_{\partial_0}(M,N)$ even in situations where embedding calculus need not converge.

\begin{enumerate}[label=(\alph*),leftmargin=*]
 	\item\label{postcomposition}\emph{Postcomposition with embeddings:} Given triads $M$, $N$, and $K$, with boundary conditions $e_{\partial_0M}\colon \partial_0M\hookrightarrow \partial_0N$ and $e_{\partial_0N} \colon\partial_0N\hookrightarrow \partial_0K$, there is a map
	 	\[
	T_\infty\Emb_{\partial_0}(M,N)\times \Emb_{\partial_0}(N,K)\lra T_\infty\Emb_{\partial_0}(M,K)
	\]
	that is associative in the evident sense and compatible with the composition maps for embeddings spaces, both up to higher coherent homotopy.
	
	 In the model of \cref{sec:presheaf-model}, these maps are given by applying the map 
	\begin{equation}\label{equ:postcomposition-with-emb}
	 \Emb_{\partial_0}(N,K)\lra \Map_{\cat{PSh}(\cat{Disc}_{\partial_0M})^{\loc}}\big(\Emb_{\partial_0}(-,N),\Emb_{\partial_0}(-,K) \big)
	\end{equation}
	induced by postcomposition in the second factor, followed by composition in $\cat{PSh}(\cat{Disc}_{\partial_0M})^{\loc}$. Note that the codomain of \eqref{equ:postcomposition-with-emb} does in general \emph{not} agree with $T_\infty\Emb_{\partial_0}(N,K)$.

	\item\label{naturality} \emph{Naturality and isotopy invariance:} In the situation of \ref{postcomposition}, if we assume $\dim(M)=\dim(N)$, then there are composition maps
	\begin{equation}\label{equ:composition-emb-calc}
	T_\infty\Emb_{\partial_0}(M,N)\times T_\infty\Emb_{\partial_0}(N,K)\lra T_\infty\Emb_{\partial_0}(M,K)
	\end{equation}
	that are associative in the evident sense and compatible with \eqref{equ:postcomposition-with-emb} and the composition for embeddings, up to higher coherent homotopy. Combining this with \ref{postcomposition}, we see that like spaces of embeddings, $T_\infty\Emb_{\partial_0}(-,-)$ is isotopy-invariant in source and target: if $M \subset M'$ is a sub-triad with $\partial_0M \subset \partial_0M'$ such that there is an embedding of triads $M' \hookrightarrow M$ which is inverse to the inclusion up to isotopy of triads, then the maps
	\[T_\infty \Emb_{\partial_0}(M',N) \ra T_\infty \Emb_{\partial_0}(M,N)\quad\text{and}\quad T_\infty \Emb_{\partial_0}(L,M) \ra T_\infty \Emb_{\partial_0}(L,M')\]
	induced by restriction and inclusion are weak equivalences. Here $L$ is any other triad with a boundary condition $e_{\partial_0}\colon \partial_0L\hookrightarrow\partial_0M$.
	
	In the model described in \cref{sec:presheaf-model}, the composition map \eqref{equ:composition-emb-calc} can implemented as follows: the codimension $0$ embedding $e_{\partial_0M}\colon \partial_0M\hookrightarrow\partial_0N$ induces enriched functor 
	\[(e_{\partial_0M})_*\colon \cat{Disc}^{\bigcdot}_{\partial_0M}\lra \cat{Disc}^{\bigcdot}_{\partial_0N}\quad\text{and}\quad (e_{\partial_0M})^*\colon \cat{PSh}(\cat{Disc}^{\bigcdot}_{\partial_0N})\ra \cat{PSh}(\cat{Disc}^{\bigcdot}_{\partial_0M})\]
	Writing $d\coloneqq \dim(M)=\dim(N)$, the functor $(e_{\partial_0M})_*$ sends on objects $\partial_0M\times[0,1)\sqcup\underline{n} \times \bR^d$ to $\partial_0N\times[0,1)\sqcup\underline{n} \times \bR^d$. On morphisms, $(e_{\partial_0M})_*$ keeps the parameter $s$ fixed and sends an embedding $e$ to the embedding given by $\mr{id}_{\partial_0N}\times (s\cdot (-))$ on $\partial_0M\times[0,1)$ and by $(e_{\partial_0M}\times [0,1)\sqcup \mr{id}_{\underline{n}\times\bR^d})\circ e|_{\underline{n}\times\bR^d}$ on ${\underline{n}\times\bR^d}$. The functor $(e_{\partial_0M})^*$ is given by precomposition with $(e_{\partial_0M})_*$. The restriction maps \[\Emb_{\partial_0N}(\partial_0N\times[0,1)\sqcup\underline{n} \times \bR^d,N)\lra \Emb_{\partial_0M}(\partial_0M\times[0,1)\sqcup\underline{n} \times \bR^d,N)\]
	are weak equivalences by the contractibility of spaces of collars, and similarly for $\Emb_{\partial_0}(-,K)$, so we have weak equivalences in $\cat{PSh}(\cat{Disc}^{\bigcdot}_{\partial_0M})$
	\[\hspace{0.5cm}
	(e_{\partial_0M})^*\Emb_{\partial_0N}(-,N) \xlra{\simeq} \Emb_{\partial_0M}(-,N) \quad\quad (e_{\partial_0M})^*\Emb_{\partial_0N}(-,K) \xlra{\simeq} \Emb_{\partial_0M}(-,K).\]
	Using the model $T_\infty \Emb_{\partial_0}(M,N)\simeq \Map_{\cat{PSh}(\cat{Disc}^{\bigcdot}_{\partial_0M})^{\loc}}\big(\Emb_{\partial_0}(-,M),\Emb_{\partial_0}(-,N) \big)$,
	 the composition \eqref{equ:composition-emb-calc} is given by applying $(e_{\partial_0M})^*$ to the second factor, composition in the category $\cat{PSh}(\cat{Disc}^{\bigcdot}_{\partial_0M})^{\loc}$, and using the weak equivalences of presheaves above.

	\item\label{convergence} \emph{Convergence on disjoint unions of discs:} Embedding calculus converges if the domain $M$ is diffeomorphic (as a triad) to $ \partial_0M\times [0,1)\sqcup T\times \bR^d$ for a finite set $T$, where 
	$\partial_0\big(\partial_0M\times [0,1)\sqcup T\times \bR^d \big)=\partial_0M\times\{0\}$. This follows from the corresponding fact for manifold calculus (see \cref{sec:presheaf-model}). By isotopy invariance, it remains true with $T\times \bR^d$ replaced by $T_1\times \bR^d\sqcup T_2\times D^d$ for finite sets $T_i$.
	
	\item\label{mapping-space} \emph{Comparison to bundle maps:}
	The derivative map $\Emb_{\partial_0}(M,N)\ra \Bun_{\partial_0}(TM,TN)$ fits into a natural commutative diagram (up canonical homotopy) of the form
	 \begin{equation}\label{equ:factorisation-to-maps}
	 \begin{tikzcd} \Emb_{\partial_0}(M,N) \rar\dar &\Bun_{\partial_0}(TM,TN)\rar&\Map_{\partial_0}(M,N) \\
	 	T_\infty \Emb_{\partial_0}(M,N). \arrow[ur,dashed] &  \end{tikzcd}
	\end{equation}
	which is compatible with composition maps from \eqref{equ:composition-emb-calc} up to higher coherent homotopy. This follows from the discussion in \cref{sec:formal-properties-mfd} \ref{descent} by observing that the target in the natural transformation $\Emb_{\partial_0}(-,N) \to \Bun_{\partial_0}(-,TN)$ is a homotopy $\cJ_1$-sheaf, so the map $ \Bun_{\partial_0}(-,TN)\ra  T_\infty\Bun_{\partial_0}(-,TN)$ is a weak equivalence of presheaves.
	
	\item\label{extension-by-identity} \emph{Extension by the identity:} Suppose that we have another triad $Q$ with an identification of $\partial_0 Q$ with a codimension zero submanifold of $\partial_0 M$. Then we can form, up to smoothing corners, the triad $M \cup Q=M\cup_{\partial_0Q}Q$ with $\partial_0(M \cup Q) = (\partial_0 M \setminus \mr{int}(\partial_0 Q)) \cup \partial_1 Q$. If $M$ and $N$ are of the same dimension and we are further given a boundary condition $e_{\partial_0} \colon \partial_0 M \hookrightarrow \partial_0 N$, we can form $N \cup Q$ in the same manner. Extending embeddings by the identity gives a map $\Emb_{\partial_0}(M,N) \ra \Emb_{\partial_0}(M \cup Q,N \cup Q)$ (strictly speaking this requires the addition of collars to the definitions to guarantee the glued map is smooth but we forego the addition of this contractible space of data), which can be shown to fit into a diagram
	\begin{equation}\label{equ:extension-diagram}
		\begin{tikzcd} \Emb_{\partial_0}(M,N) \rar \dar& \Emb_{\partial_0}(M \cup Q,N 	\cup Q) \dar \\
		T_\infty \Emb_{\partial_0}(M,N) \arrow[r,dashed] & T_\infty \Emb_{\partial_0}(M \cup Q,N \cup Q),\end{tikzcd}
	\end{equation}
	commutative up to preferred homotopy.
	The existence of the dashed map in \eqref{equ:extension-diagram} is proved by noting that $T_\infty \Emb_{\partial_0}(- \cup Q,N \cup Q)$ is a homotopy $\cJ_\infty$-sheaf on $\cat{Disc}_{\partial_0 M}$, see \cref{sec:formal-properties-mfd} \ref{descent}.
	\item\label{isotopy-extension} \emph{Isotopy extension:} Suppose that the triads $M$ and $N$ are both $d$-dimensional, and $e_{\partial_0 M} \colon \partial_0 M \hookrightarrow \partial_0 N$ is a boundary condition. Fix a compact $d$-dimensional sub-manifold triad $P\subset M$ (so in particular $\partial_0P=\partial_0 M\cap \partial P$) and consider the induced boundary condition $e_{\partial_0}\colon \partial_0 M \supset \partial_0P \hookrightarrow \partial_0 N$. Suppose that embedding calculus converges for triad embeddings of triads of the form $P \sqcup T\times \bR^d\hookrightarrow N$ for finite sets $T$ in the sense that the map
		\[\Emb_{\partial_0}(P \sqcup T\times \bR^d, N)\lra T_\infty\Emb_{\partial_0}(P \sqcup T\times \bR^d, N)\]
	is a weak equivalence. Then, fixing a triad embedding $e\colon P \hookrightarrow N$ disjoint from $\partial N\backslash e_{\partial_0 M}(\partial_0P)$, there is a map of fibration sequences
	\[\begin{tikzcd}[column sep=0.5cm]\Emb_{\partial_0}\big(M{\setminus}\mr{int}(P),N {\setminus} \mr{int}(e(P))\big) \rar \dar & \Emb_{\partial_0}\big(M,N\big) \dar \rar & \Emb_{\partial_0}\big(P,N\big) \arrow[d,"\simeq"] \\
		T_\infty \Emb_{\partial_0}\big(M {\setminus} \mr{int}(P),N {\setminus} \mr{int}(e(P))\big) \rar & T_\infty \Emb_{\partial_0}\big(M,N\big) \rar &T_\infty \Emb_{\partial_0}\big(P,N\big)
	\end{tikzcd}\]
	whose right square results from \eqref{equ:composition-emb-calc} and whose left square is an instance of the diagram \eqref{equ:extension-diagram}. The homotopy fibres are taken over the embedding $e$ and its image in $T_\infty \Emb_{\partial_0}(P,N)$, and
	$\partial_0(M \setminus \mr{int}(P))\coloneqq\partial_1P\cup \partial_0 M\backslash \mr{int}(\partial_0P)$
	with boundary condition induced by $e$ and $e_{\partial_0 M}$. For the upper row, this is a form of the usual parametrised isotopy extension theorem. For the lower row, this is a mild generalisation of a result of Knudsen and Kupers \cite[Theorem 6.1, Remarks 6.4 and 6.5]{KnudsenKupers}. Note that every triad embedding $P\hookrightarrow N$ is disjoint from $\partial N\backslash e_{\partial_0}(\partial_0P)$ up to isotopy of triad embeddings, so if we would like to draw conclusions about all homotopy fibres of the right horizontal maps, it suffices to restrict to embeddings of this form. 
\end{enumerate}

We record the following immediate corollary of Properties \ref{convergence} and \ref{isotopy-extension} which will allow us to restrict to triads with $\partial_0M\neq \varnothing$ when proving convergence results.
\begin{lemma}\label{lem:remove-discs}
	Let $M$ and $N$ be $d$-dimensional triads, $e_{\partial_0}\colon \partial_0 M\hookrightarrow \partial_0 N$ a boundary condition, and $D^d\subset\mr{int}(M)$ an embedded disc. The map
		\[\Emb_{\partial_0}(M,N)\lra T_\infty\Emb_{\partial_0}(M,N)\] 
	is a weak equivalence if and only if for all embeddings $e\colon D^d\hookrightarrow \mr{int}(N)$ the map
		\[\Emb_{\partial_0}\big(M\backslash \mr{int}(D^d),N\backslash\mr{int}(e(D^d))\big)\lra T_\infty\Emb_{\partial_0}\big(M\backslash \mr{int}(D^d),N\backslash\mr{int}(e(D^d))\big)\]
	is a weak equivalence, where $\partial_0(M\backslash \mr{int}(D^d)) = \partial_0 M \cup \partial D^d$ and $\partial_0(N\backslash \mr{int}(e(D^d))) = \partial_0 N \cup \partial e(D^d)$.
\end{lemma}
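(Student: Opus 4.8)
The plan is to deduce this from the isotopy extension property of embedding calculus, Property \ref{isotopy-extension}, applied with $P = D^d \subset \mathrm{int}(M)$, followed by a five-lemma argument on the resulting map of fibration sequences. First I would verify the hypotheses of Property \ref{isotopy-extension}. Since $D^d \subset \mathrm{int}(M)$ we have $\partial_0 D^d = \varnothing$, so the induced boundary condition is empty and the triads $D^d \sqcup T \times \bR^d$ occurring in that property are disjoint unions of discs and Euclidean spaces; embedding calculus converges for these by Property \ref{convergence} together with the observation there that the Euclidean factors may be replaced by discs. Moreover, any embedding $e \colon D^d \hookrightarrow \mathrm{int}(N)$ is automatically disjoint from $\partial N = \partial N \setminus e_{\partial_0}(\partial_0 D^d)$, so it meets the disjointness requirement in Property \ref{isotopy-extension}; and one checks that the boundary conditions on $M \setminus \mathrm{int}(D^d)$ and $N \setminus \mathrm{int}(e(D^d))$ produced by that property agree with those in the statement (here $\partial_1 P = \partial D^d$ and $\partial_0 P = \varnothing$).

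Granting this, for each such $e$ Property \ref{isotopy-extension} produces a map of fibration sequences with total spaces $\Emb_{\partial_0}(M,N)$ and $T_\infty \Emb_{\partial_0}(M,N)$, base spaces $\Emb_{\partial_0}(D^d,N)$ and $T_\infty \Emb_{\partial_0}(D^d,N)$, and fibres over $e$ (respectively over its image) the two sides of the map in the statement. The right-hand vertical map $\Emb_{\partial_0}(D^d,N) \to T_\infty \Emb_{\partial_0}(D^d,N)$ is a weak equivalence by Property \ref{convergence}, being the embedding calculus map for a disc; in particular it is a bijection on $\pi_0$, so every path component of the base of the lower fibration contains the image of some $e \colon D^d \hookrightarrow \mathrm{int}(N)$, while conversely every such $e$ defines a point in the base of the upper fibration.

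The lemma then follows, in both directions, from the long exact sequences of homotopy groups of the two fibrations and the five lemma, applied over each path component of the base of the upper fibration and, using the $\pi_0$-bijection just noted, over each path component of the lower one: with the right-hand vertical map a weak equivalence, the middle vertical map is a weak equivalence precisely when the left-hand vertical map --- the disc-removed map --- is a weak equivalence for every $e$. The remark at the end of Property \ref{isotopy-extension} is what lets one realise every homotopy fibre of the right-hand horizontal maps by an embedding of the required form, which matches the ``for all $e$'' in the statement. I expect the only point requiring care --- rather than any genuine difficulty --- to be this $\pi_0$-bookkeeping: one must make sure the fibres are compared over enough basepoints to detect a weak equivalence of total spaces, and this is exactly what convergence for discs (Property \ref{convergence}) supplies.
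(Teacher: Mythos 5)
Your proof is correct and follows essentially the same route as the paper's: apply isotopy extension to the restriction to $D^d$, note that the base map $\Emb(D^d,N)\to T_\infty\Emb(D^d,N)$ is a weak equivalence by convergence on discs, and conclude by comparing homotopy fibres over all components (the paper packages the $\pi_0$-bookkeeping and five-lemma step as the general fact about commutative squares with one vertical equivalence). Your verification of the hypotheses of Property (f) and the use of the remark there to realise all homotopy fibres match the paper's reasoning.
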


\begin{proof}This is an instance of the fact that for a commutative square 
	\[\begin{tikzcd} E \rar \dar & B \dar{\simeq} \\
		E' \rar & B'\end{tikzcd}\]
	whose right arrow is a weak equivalence, the map $E \to E'$ is a weak equivalence if and only if the map $\hofib(E \to B) \to \hofib(E' \to B')$ is a weak equivalence for all choices of basepoints. We apply this to the commutative square induced by restriction
	\[\begin{tikzcd} \Emb_{\partial_0}(M,N) \rar \dar & \Emb(D^d,N) \dar\\
		T_\infty \Emb_{\partial_0}(M,N) \rar & T_\infty \Emb(D^d,N) \end{tikzcd}\]
	whose right-hand map is a weak equivalence by the convergence on discs (Property \ref{convergence}). By isotopy extension (Property \ref{isotopy-extension}),  the map on homotopy fibres over an embedding $e\colon D^d\hookrightarrow \mr{int}(N)$ agrees with the second map in the statement, so the claim follows.
\end{proof}

We continue with a pair of remarks about these properties:

\begin{remark}\label{rem:triad-generalisation}Boavida de Brito and Weiss \cite[Section\,9]{BoavidaWeissSheaves} restrict their attention to the case $\partial_0 M=\partial M$, but this turns out to be no less general: given a manifold triad $M$, the manifold triad $M\backslash \partial_1M$ with $\partial_0(M\backslash \partial_1M)=\mr{int}(\partial_0M)=\partial(M\backslash \partial_1M)$ is isotopy equivalent to $M$, so there is a weak equivalence $T_\infty\Emb_{\partial_0}(M,N) \simeq  T_\infty\Emb_{\partial}(M\backslash\partial_1M,N\backslash\partial_1N)$
by item \ref{naturality} above.
\end{remark}

\begin{remark}\label{rem:dnb} As a consequence of property \ref{mapping-space} above, to show that the map of \cref{acor:main-diff} on path-components $\pi_0\,\Diff_\partial(\Sigma) \to \pi_0\,T_\infty \Emb_\partial(\Sigma,\Sigma)$ is injective, it suffices to prove that
	\begin{equation}\label{eqn:dnb}\pi_0\,\Diff_\partial(\Sigma) \lra \pi_0\,\hAut_\partial(\Sigma)\end{equation}
is injective, which is true for all compact surfaces and can be seen as follows.

First, one reduces to the case of connected surfaces. For this, it suffices to show that closed connected surfaces are homotopy equivalent if and only if they are diffeomorphic, which is a consequence of the fact that closed surfaces are classified by orientability and the Euler characteristic, and both of these are preserved by homotopy equivalences relative to the boundary. In the connected case, the claimed injectivity is proved for instance in \cite[Theorem 4.6]{Boldsen}, with the exception of $\Sigma=S^2$ and $\Sigma=\bR P^2$. These two cases can settled using the fibre sequence resulting from restricting to an embedded $2$-disc and the fact that the mapping class groups of a disc and a Möbius strip are trivial (see \cite[Theorem B]{Smale}, \cite[Theorem 3.4]{Epstein}).

In fact, the forgetful map \eqref{eqn:dnb} is often an isomorphism: for closed orientable surfaces of positive genus this is an instance of the Dehn--Nielsen--Baer theorem \cite[Theorem 8.1]{FarbMargalit}, but there is also an argument for most surfaces with boundary \cite[Theorem 1.1 (1)]{Boldsen}.
\end{remark}

The proof of \cref{athm:main-general} relies on some additional properties of embedding calculus which we establish in the ensuing subsections. These properties are not very surprising, but seem to have not appeared in the literature before.

\subsection{Thickened embeddings} The first property concerns the behaviour of embedding calculus upon replacing the domain $M$ by a thickening, that is, a vector bundle $V$ over $M$. 

Fix manifold triads $M$ and $N$ and a $k$-dimensional vector bundle $p \colon V \to M$. We consider $V$ as a triad via $\partial_0 V\coloneqq p^{-1}(\partial_0M)$. Fixing a boundary condition $e_{\partial_0}\colon \partial_0 V\hookrightarrow \partial_0N$, we obtain a boundary condition $e_{\partial_0}'\colon \partial_0M\hookrightarrow \partial_0N$ by restriction along the zero-section $M\subset V$. From \eqref{equ:factorisation-to-maps}, we obtain the solid arrows in the diagram 
\begin{equation}\label{equ:thicken-embeddings}\begin{tikzcd} \Emb_{\partial_0}(V,N) \rar \dar & \Emb_{\partial_0}(M,N) \dar \\
T_\infty\Emb_{\partial_0}(V,N) \dar \arrow[r,dashed]& T_\infty\Emb_{\partial_0}(M,N) \dar \\
	\mr{Bun}_{\partial_0}(TV,TN) \rar & \mr{Bun}_{\partial_0}(TM,TN).\end{tikzcd}\end{equation}

\begin{lemma}\label{lem:thickened-embeddings}There exists a dashed map in \eqref{equ:thicken-embeddings} such that the diagram commutes up to preferred homotopy and so that the two subsquares are homotopy cartesian.
\end{lemma}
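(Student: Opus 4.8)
The plan is to prove the lower square is homotopy cartesian first, and then deduce the upper square from it together with the known (Property \ref{mapping-space}) comparison to bundle maps. For the lower square, the key observation is that both vertical maps into $\mr{Bun}_{\partial_0}$ are obtained by applying $T_\infty(-)$ to the natural transformation $\Emb_{\partial_0}(-, ?) \Rightarrow \mr{Bun}_{\partial_0}(T(-), T?)$, which lands in a homotopy $\cJ_1$-sheaf, hence in a presheaf on which $T_\infty$ acts as the identity. Thus I want to realise the whole lower square as the image under $T_\infty$ of a square of presheaves on $\cat{Disc}_{\partial_0}$ (after the identifications of \cref{sec:smaller-model} and the restriction functors $p^*$ induced by the bundle projection $V \to M$) and argue that this square of presheaves is \emph{objectwise} homotopy cartesian — since $T_\infty$ preserves homotopy limits by Property \ref{homotopy-limits} in \cref{sec:formal-properties-mfd}, objectwise homotopy cartesian is preserved. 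Concretely, for a disc-object $C = \partial_0 M \times [0,1) \sqcup T \times \bR^d$ one compares $\Emb_{\partial_0}(C, N)$ with $\mr{Bun}_{\partial_0}(TC, TN)$ and with the corresponding spaces after thickening the domain: the point is that for a disjoint union of Euclidean spaces and a collar, both embedding spaces and bundle-map spaces are computed by scanning/configuration-space formulas, and thickening the domain by a vector bundle (which over such a contractible domain is trivial) only changes the ``vertical'' data, i.e.\ the derivative, in a way that is the same on both sides; so the square is homotopy cartesian before applying $T_\infty$.

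In more detail, I would argue that over a disc-object the map $\Emb_{\partial_0}(V|_C, N) \to \Emb_{\partial_0}(C,N)$ has homotopy fibre over an embedding $e \colon C \hookrightarrow N$ the space of extensions of $e$ to an embedding of the thickening, which by the tubular neighbourhood theorem and convergence on discs (Property \ref{convergence}) is identified with a space of bundle monomorphisms $V|_C \to e^*TN$ extending the identification of the zero-section part coming from $de$ — that is, precisely the homotopy fibre of $\mr{Bun}_{\partial_0}(TV|_C, TN) \to \mr{Bun}_{\partial_0}(TC, TN)$ over $de$. This uses that $C$ is a disjoint union of a collar and Euclidean spaces so that the relevant spaces of embeddings are equivalent to spaces of immersions, and hence to spaces of bundle maps by Smale--Hirsch; the boundary condition on the collar is handled by the contractibility of collars as in \cref{sec:smaller-model}. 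Having the lower square homotopy cartesian, the upper square follows: by Property \ref{mapping-space} the outer rectangle (top vertices $\Emb_{\partial_0}(V,N)$, $\Emb_{\partial_0}(M,N)$, bottom vertices the two $\mr{Bun}_{\partial_0}$'s) need not be homotopy cartesian in general, but the total composite vertical maps $\Emb_{\partial_0}(V,N) \to \mr{Bun}_{\partial_0}(TV,TN)$ and $\Emb_{\partial_0}(M,N)\to\mr{Bun}_{\partial_0}(TM,TN)$ factor through $T_\infty$ compatibly, so the upper square is the pullback square sitting over the (homotopy cartesian) lower one; one defines the dashed arrow as the induced map into the homotopy pullback $T_\infty \Emb_{\partial_0}(M,N) \times^h_{\mr{Bun}_{\partial_0}(TM,TN)} \mr{Bun}_{\partial_0}(TV,TN) \simeq T_\infty\Emb_{\partial_0}(V,N)$ and checks commutativity of the upper square with the solid arrows of \eqref{equ:thicken-embeddings} up to preferred homotopy, which is formal from the construction.

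I expect the main obstacle to be making the identification of homotopy fibres over disc-objects fully precise and natural in the disc-object $C$ — i.e.\ producing the square of presheaves on $\cat{Disc}_{\partial_0}^{\bigcdot}$ (or $\cat{Disc}_{\partial_0}^{\mr{sk}}$) rather than just an objectwise statement. The subtlety is that $\Emb_{\partial_0}(V|_C, N)$ is not literally a presheaf on $\cat{Disc}_{\partial_0M}$ but on $\cat{Disc}_{\partial_0V}$, so one first restricts along the zero-section functor $\cat{Disc}_{\partial_0 M} \to \cat{Disc}_{\partial_0 V}$ (on disc-objects this sends $T\times\bR^d$ to $T\times \bR^{d}$ with a trivial $\bR^k$ added, compatibly with the collars), and then one must check that this restricted presheaf, together with its natural transformation to $\mr{Bun}_{\partial_0}(TV, TN)$, genuinely fits into a commuting square of presheaves whose value at every object is homotopy cartesian. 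Once that bookkeeping is in place, preservation of homotopy limits by $T_\infty$ and the $\cJ_1$-sheaf property of bundle maps do the rest.
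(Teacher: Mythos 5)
Your strategy is in the right spirit and shares the key ingredients with the paper's proof (the $\cJ_1$-sheaf property of $\Bun_{\partial_0}$, preservation of homotopy limits by $T_\infty$, and convergence on disc-shaped objects, which reduces the essential input to the fact that both derivative maps $\Emb_{\partial_0}(p^{-1}(U),N)\to\Bun_{\partial_0}(Tp^{-1}(U),TN)$ and $\Emb_{\partial_0}(U,N)\to\Bun_{\partial_0}(TU,TN)$ are weak equivalences on such $U$). However, there is a genuine gap in the step you yourself flag as "bookkeeping," and it is not merely bookkeeping. You want to realise the lower square of \eqref{equ:thicken-embeddings} as $T_\infty$ of a square of presheaves on $\cat{Disc}_{\partial_0 M}$, but the upper-left vertex $T_\infty\Emb_{\partial_0}(V,N)$ is by definition a mapping space in $\cat{PSh}(\cat{Disc}_{\partial_0 V})^{\loc}$, where the disc-objects are $(d+k)$-dimensional. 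Even after writing down a thickening functor $\cat{Disc}_{\partial_0 M}\to\cat{Disc}_{\partial_0 V}$ and restricting $\Emb_{\partial_0V}(-,N)$ along it, the identification of $T_\infty$ (computed over $\cat{Disc}_{\partial_0 M}$) of that restriction with $T_\infty\Emb_{\partial_0}(V,N)$ (computed over $\cat{Disc}_{\partial_0 V}$) is a nontrivial assertion that your proposal leaves unproved; it does not follow from homotopy-limit preservation or the $\cJ_1$-sheaf property of $\Bun$ alone.

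The paper sidesteps this issue entirely by working with the poset $\cO$ of open subsets of $M$ containing a collar on $\partial_0M$, so that $p^{-1}(U)$ makes literal sense as a presheaf on $\cO$, and by \emph{defining} an auxiliary presheaf $F$ on $\cO$ as the homotopy pullback of $T_\infty\Emb_{\partial_0}(-,N)$ and $\Bun_{\partial_0}(Tp^{-1}(-),TN)$ over $T_\infty\Bun_{\partial_0}(T-,TN)$. It then proves $F(M)\simeq T_\infty\Emb_{\partial_0}(V,N)$ by a descent argument: $F$ satisfies descent for the complete Weiss $\infty$-cover $\cU\subset\cO$ of collar-plus-finitely-many-discs, the map $\Emb_{\partial_0}(p^{-1}(-),N)\to F(-)$ is a weak equivalence on $\cU$, and the preimages $p^{-1}(\cU)$ form a complete Weiss $\infty$-cover of $V$, so descent for $T_\infty\Emb_{\partial_0}(-,N)$ at $V$ finishes the identification. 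This is the step your proposal is missing. Relatedly, your claim that "the outer rectangle \ldots need not be homotopy cartesian in general" is incorrect and undermines your own conclusion: the outer rectangle (with top row the embedding spaces and bottom row the $\Bun$'s) \emph{is} objectwise homotopy cartesian — this is exactly the tubular-neighbourhood content you invoke when identifying homotopy fibres — and you need it, together with the lower square being homotopy cartesian, to deduce that the upper square is homotopy cartesian by pasting. Finally, the assertion that compatibility of the dashed arrow with $\Emb_{\partial_0}(V,N)\to T_\infty\Emb_{\partial_0}(V,N)$ is "formal from the construction" also hides work: one must check that the equivalence $T_\infty\Emb_{\partial_0}(V,N)\simeq F(M)$ you produce is compatible with the canonical map from $\Emb_{\partial_0}(V,N)$, which in the paper's proof is built into the descent argument via the augmentation $\Emb_{\partial_0}(p^{-1}(-),N)\to F(-)$.
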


\begin{proof}Let $\cO$ be the poset of open subsets $U \subset M$ containing a collar on $\partial_0 M$. Taking derivatives  as well as restricting embeddings and bundle maps induces a commutative diagram 
	\[\begin{tikzcd}[row sep=0.4cm] \Emb_{\partial_0}(p^{-1}(-),N) \arrow{rr} \arrow{dd} & &  \Emb_{\partial_0}(-,N) \dar\\[-5pt]
	& & T_\infty \Emb_{\partial_0}(-,N) \dar \\[-5pt]
	\Bun_{\partial_0}(Tp^{-1}(-),TN) \rar &  \Bun_{\partial_0}(T-,TN) \rar{\simeq} & T_\infty \Bun_{\partial_0}(T-,TN)\end{tikzcd}\]
of space-valued presheaves on $\cO$, where the bottom equivalence results from the discussion in \cref{sec:formal-properties} \ref{mapping-space}. Since homotopy pullbacks of presheaves are computed objectwise, this is a homotopy-cartesian square of presheaves. We define a new presheaf $F(-)$ on $\cO$ as the homotopy pullback
	\begin{equation}\label{eqn:def-of-f}\begin{tikzcd}F(-) \rar \dar & T_\infty \Emb_{\partial_0}(-,N) \dar \\
	\Bun_{\partial_0}(Tp^{-1}(-),TN) \rar & T_\infty \Bun_{\partial_0}(T-,TN).\end{tikzcd}\end{equation}
The result will follow by evaluation at $M \in \cO$ once we provide an identification \[F(M) \simeq T_\infty \Emb_{\partial_0}(p^{-1}(M),N)=T_\infty\Emb_{\partial_0}(V,N)\] compatible with the maps to $\Bun_{\partial_0}(TV,TN)$ and from $\Emb_{\partial_0}(V,N)$. It follows from \cref{sec:formal-properties-mfd} \ref{descent} and \cref{sec:formal-properties} \ref{convergence}, that it suffices to verify that 
\begin{enumerate}[label=(\alph*)]
	\item $F$ satisfies descent for the complete $J_\infty$-cover $\cU \subset \cO$ given by those open subsets $U \subset M$ equal to a collar on $\partial_0 M$ and a finite collection of open discs, and
	\item the map $\Emb_{\partial_0}(p^{-1}(-),N) \to F(-)$ is a weak equivalence when evaluated on $U \in \cU$.
\end{enumerate}
For (a), we observe that all entries but $F(-)$ in the homotopy pullback diagram \eqref{eqn:def-of-f} defining $F(-)$ satisfy descent with respect to $\cJ_\infty$-covers, so $F(-)$ does as well. For (b), we observe that on $U \in \cU$, the right vertical map of \eqref{eqn:def-of-f} is a weak equivalence so it suffices to verify that $\Emb_{\partial_0}(p^{-1}(U),N) \to \Bun_{\partial_0}(Tp^{-1}(U),TN)$ is a weak equivalence. This is indeed the case because $p^{-1}(U)$ is a disjoint union of a collar on $\partial_0 V$ and a finite collection of open discs.
\end{proof}

We derive from \cref{lem:thickened-embeddings} two lemmas that will allow us to interpolate between convergence questions for $\Emb_{\partial_0}(M,N)$ and for $\Emb_{\partial_0}(V,N)$.

\begin{lemma}\label{lem:thick-arc} Let $M$ and $N$ be manifold triads, $p \colon V \to M$ be a vector bundle considered as a triad by $\partial_0 V = p^{-1}(\partial_0 M)$, and $e_{\partial_0} \colon\partial_0 V \to \partial_0 N$ be a boundary condition. Then the map
	\[\Emb_{\partial_0}(V,N) \lra T_\infty \Emb_{\partial_0}(V,N)\]	
	is a weak equivalence if the map $\Emb_{\partial_0}(M,N) \to T_\infty \Emb_{\partial_0}(M,N)$ is a weak equivalence with boundary condition obtained by restricting $e_{\partial_0}$ to $\partial_0 M \subset \partial_0 V$.\end{lemma}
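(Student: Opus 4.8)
The plan is to obtain this as a formal consequence of \cref{lem:thickened-embeddings}. That lemma produces a commutative diagram \eqref{equ:thicken-embeddings} in which the upper square
\[\begin{tikzcd} \Emb_{\partial_0}(V,N) \rar \dar & \Emb_{\partial_0}(M,N) \dar \\
T_\infty\Emb_{\partial_0}(V,N) \rar & T_\infty\Emb_{\partial_0}(M,N) \end{tikzcd}\]
is homotopy cartesian, and here the right-hand vertical arrow is precisely the embedding calculus comparison map for $M$ equipped with the boundary condition $e_{\partial_0}'=e_{\partial_0}|_{\partial_0 M}$ appearing in the statement.

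First I would recall the standard fact that in a homotopy cartesian square, for every choice of basepoint in the bottom-left corner the induced map from the homotopy fibre of the left vertical map to the homotopy fibre of the right vertical map (over the corresponding basepoint downstairs) is a weak equivalence. By hypothesis the right-hand vertical map is a weak equivalence, so all of its homotopy fibres are contractible; hence every homotopy fibre of the left-hand vertical map $\Emb_{\partial_0}(V,N)\to T_\infty\Emb_{\partial_0}(V,N)$ is contractible as well. Since a map all of whose homotopy fibres over all basepoints are contractible is a weak equivalence, this gives the lemma.

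There is no real obstacle here: the substantive input, namely the existence of the dashed map and the homotopy-cartesianness of the upper square, has already been established in \cref{lem:thickened-embeddings}, so what remains is an elementary argument about homotopy pullback squares of the kind used in the proof of \cref{lem:remove-discs}. The only point requiring any care at all is to run the homotopy-fibre comparison over every basepoint of the target rather than over a single one, so that one genuinely concludes a weak equivalence and not merely an isomorphism on homotopy groups based at one point.
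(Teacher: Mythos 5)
Your proposal is correct and is exactly the paper's argument: the authors also deduce the lemma in one line from the upper homotopy cartesian square of \cref{lem:thickened-embeddings}, and your fleshing-out of the homotopy-fibre comparison (including the care about running it over every basepoint) is the standard reasoning implicitly invoked there.
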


\begin{proof}This follows from the upper homotopy cartesian square in \eqref{equ:thicken-embeddings} provided by \cref{lem:thickened-embeddings}.
\end{proof}

\begin{lemma}\label{lem:thick-arc-converse} Let $M$ be a $d$-dimensional manifold triad, $N$ be a $(d+k)$-dimensional manifold triad, and $e_{\partial_0}\colon \partial_0 M \hookrightarrow \partial_0 N$ be a boundary condition. Then the map
	\[\Emb_{\partial_0}(M,N) \lra T_\infty \Emb_{\partial_0}(M,N)\]
	is a weak equivalence if the map $\Emb_{\partial_0}(V,N) \to T_\infty \Emb_{\partial_0}(V,N)$ is a weak equivalence for all $k$-dimensional vector bundles $V \to M$ and boundary conditions $\partial_0 V \hookrightarrow \partial N$ extending $e_{\partial_0}$.
\end{lemma}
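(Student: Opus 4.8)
The plan is to analyse the convergence map $f\colon \Emb_{\partial_0}(M,N)\to T_\infty\Emb_{\partial_0}(M,N)$ fibre by fibre, using the two homotopy cartesian subsquares of the diagram \eqref{equ:thicken-embeddings} provided by \cref{lem:thickened-embeddings} to pass between $M$ and its thickenings $V\to M$. I would establish two claims: \textbf{(i)} every homotopy fibre of $f$ over a point in the image of $f$ is weakly contractible, and \textbf{(ii)} $f$ is surjective on path components. Granting these, every homotopy fibre of $f$ is weakly contractible (given $\xi\in T_\infty\Emb_{\partial_0}(M,N)$, use \textbf{(ii)} to find a point of $\Emb_{\partial_0}(M,N)$ in the same path component as $\xi$, and then apply \textbf{(i)}), so $f$ is a weak equivalence.

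For \textbf{(i)}, fix $e\in \Emb_{\partial_0}(M,N)$ and let $p\colon V\coloneqq \nu_e\to M$ be its normal bundle, regarded as a triad via $\partial_0 V=p^{-1}(\partial_0M)$ and equipped with the boundary condition $\partial_0V\hookrightarrow \partial_0 N$ induced by a tubular neighbourhood of $e_{\partial_0}(\partial_0M)$ in $\partial_0 N$, so that it extends $e_{\partial_0}$. By the tubular neighbourhood theorem in the world of triads, there is an embedding $\bar e\in \Emb_{\partial_0}(V,N)$ restricting to $e$ along the zero section. Applying \cref{lem:thickened-embeddings} to this $V$, the upper subsquare of \eqref{equ:thicken-embeddings} is homotopy cartesian, so passing to the homotopy fibres of its two vertical maps yields a weak equivalence
\[\hofib_{\bar e}\big(\Emb_{\partial_0}(V,N)\to T_\infty\Emb_{\partial_0}(V,N)\big)\xlra{\simeq}\hofib_{e}(f).\]
By hypothesis the left-hand vertical map is a weak equivalence, so its homotopy fibre is weakly contractible; hence so is $\hofib_e(f)$. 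Since $e$ was arbitrary, this proves \textbf{(i)}.

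For \textbf{(ii)}, let $\xi\in T_\infty\Emb_{\partial_0}(M,N)$ and let $b\in \Bun_{\partial_0}(TM,TN)$, with underlying map $g\colon M\to N$, be the image of $\xi$ under the map $T_\infty\Emb_{\partial_0}(M,N)\to\Bun_{\partial_0}(TM,TN)$ appearing in \eqref{equ:thicken-embeddings}. Set $p\colon V\coloneqq g^*TN/b(TM)\to M$, a rank-$k$ vector bundle; choosing a fibrewise metric on $g^*TN$ splits the short exact sequence $0\to TM\to g^*TN\to V\to0$ and identifies $g^*TN\cong TM\oplus V$. As $TV\cong p^*(TM\oplus V)\cong (g\circ p)^*TN$ over the total space of $p$, this produces a fibrewise injective bundle map $\hat b\colon TV\to TN$ whose restriction along the zero section $\iota\colon M\hookrightarrow V$ satisfies $\hat b\circ d\iota=b$; arranging all choices compatibly with the collar near $\partial_0M$ makes $\hat b$ an element of $\Bun_{\partial_0}(TV,TN)$ for a boundary condition on $V$ built as in \textbf{(i)}. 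Applying \cref{lem:thickened-embeddings} to this $V$, the lower subsquare of \eqref{equ:thicken-embeddings} is homotopy cartesian, so the point $\xi$, the point $\hat b$, and the constant path at $b$ together determine a point $\tilde\xi\in T_\infty\Emb_{\partial_0}(V,N)$ mapping to $\xi$. By hypothesis $\Emb_{\partial_0}(V,N)\to T_\infty\Emb_{\partial_0}(V,N)$ is a weak equivalence, so $\tilde\xi$ lifts up to homotopy to some $v\in \Emb_{\partial_0}(V,N)$, and by the commutativity of \eqref{equ:thicken-embeddings} the class $f(v|_M)$ lies in the path component of $\xi$. This proves \textbf{(ii)} and completes the argument.

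The homotopy-theoretic content is carried entirely by \cref{lem:thickened-embeddings}, so I expect the main obstacle to be differential-topological bookkeeping: organising the normal bundles $\nu_e$ and $g^*TN/b(TM)$ into triads with boundary conditions that genuinely extend $e_{\partial_0}$ and respect the prescribed collar behaviour, and verifying that the relative tubular neighbourhood theorem applies in this triad setting. One could instead try to decompose $\Emb_{\partial_0}(M,N)$ and $T_\infty\Emb_{\partial_0}(M,N)$ compatibly with $f$ as coproducts, over isomorphism classes of rank-$k$ bundles $V$, of homotopy quotients of $\Emb_{\partial_0}(V,N)$ and $T_\infty\Emb_{\partial_0}(V,N)$ by the gauge group of $V$; but this requires setting up those actions and still rests on the same analysis of $\Bun_{\partial_0}(TV,TN)\to\Bun_{\partial_0}(TM,TN)$, so the fibrewise argument seems more economical.
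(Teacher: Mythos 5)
Your proof is correct and follows essentially the same route as the paper: both arguments hinge on the two homotopy cartesian squares from \cref{lem:thickened-embeddings}, construct the relevant rank-$k$ bundle $V$ as a metric complement of the image of $TM$ in the pullback of $TN$, and combine contractibility of homotopy fibres with $\pi_0$-surjectivity. The only cosmetic difference is that in step \textbf{(i)} you pass through an actual embedding and the tubular neighbourhood theorem for triads, whereas the paper works uniformly from the $\Bun_{\partial_0}$-image $\beta'$ of a component $\beta$ of $T_\infty\Emb_{\partial_0}(M,N)$, thereby sidestepping that theorem entirely.
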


\begin{proof}
We write $T_\infty \Emb_{\partial_0}(M,N)_\beta$ for the path component of an element $\beta \in T_\infty \Emb_\partial(M,N)$ and $\Emb_{\partial_0}(M,N)_\beta$ for the union of path components mapping to the component of $\beta$. It suffices to prove that $\Emb_{\partial_0}(M,N)_\beta \ra T_\infty \Emb_{\partial_0}(M,N)_\beta$ is a weak equivalence for all $\beta$.
	
	Writing $\beta'\in \Bun_{\partial_0}(TM,TN)$ for the image of $\beta$ under $T_\infty \Emb_{\partial_0}(M,N)\ra \Bun_{\partial_0}(TM,TN)$ from  \cref{sec:formal-properties} \ref{mapping-space}, we choose a metric on $TN$, let $V$ be the vector bundle over $M$ whose fibre over $m \in M$ is the orthogonal complement to $\beta'(T_m M)$ in $T_{\beta'(m)}N$, and extend the boundary condition $e_{\partial_0}\colon \partial_0M\hookrightarrow \partial_0N$  to $\partial_0 V$ by exponentiation. Writing $\Emb_{\partial_0}(V,N)_{\beta}$ and $T_\infty \Emb_{\partial_0}(V,N)_{\beta}$ for the unions of the path components mapping to $\beta$ in \eqref{equ:thicken-embeddings}, \cref{lem:thickened-embeddings} yields a homotopy pullback
	\[\begin{tikzcd} \Emb_{\partial_0}(V,N)_{\beta} \rar \dar{\simeq} & \Emb_{\partial_0}(M,N)_\beta \dar \\
		T_\infty \Emb_{\partial_0}(V,N)_{\beta} \rar & T_\infty \Emb_{\partial_0}(M,N)_{\beta}\end{tikzcd}\]
	whose left vertical map a weak equivalence by assumption. By construction, $\beta'$ lifts to a bundle map in $\Bun_{\partial_0}(TV,TN)$ under the bottom horizontal map in \eqref{equ:thicken-embeddings}, so it follows from \cref{lem:thickened-embeddings} that $T_\infty \Emb_{\partial_0}(V,N)_{\beta}$ is nonempty. As $T_\infty \Emb_{\partial_0}(M,N)_{\beta}$ is path-connected, this implies that the left vertical map in the homotopy pullback is a weak equivalence.
\end{proof}

\subsection{Lifting along covering maps}The second property is concerned with the problem of lifting embeddings of triads $M\hookrightarrow N$ along covering maps $\pi \colon \widetilde{N}\ra N$. To state the result, we consider the cover $\widetilde{N}$ as a triad by setting 
	$\partial_0 \widetilde{N} \coloneqq \pi^{-1}(\partial_0 N)$ and $\partial_1 \widetilde{N} \coloneqq \pi^{-1}(\partial_1 N)$,
and fix a boundary condition $e_{\partial_0}\colon \partial_0 M \hookrightarrow \partial_0 N$ as well as a lift $\tilde{e}_{\partial_0}\colon \partial_0 M \hookrightarrow \partial_0 \widetilde{N}$. We pick a homotopy class $[\alpha] \in \pi_0 \Map_{\partial_0}(M,N)$ such that there exists a lift $[\tilde{\alpha}] \in \pi_0 \Map_{\partial_0}(M,\widetilde{N})$. We shall assume that $\partial_0 M \to M$ is $0$-connected, so that this lift is unique. We write
	\[\Emb_{\partial_0}(M,N)_\alpha\subset \Emb_{\partial_0}(M,N) \quad \text{and} \quad T_\infty \Emb_{\partial_0}(M,N)_\alpha\subset T_\infty \Emb_{\partial_0}(M,N)\]
for the unions of the path components that map to $[\alpha]\in \pi_0\, \Map_{\partial_0}(M,N)$ via the maps in \eqref{equ:factorisation-to-maps}. We similarly define subspaces $\Emb_{\partial_0}(M,\widetilde{N})_{\tilde{\alpha}}\subset \Emb_{\partial_0}(M,\widetilde{N})$ and $T_\infty \Emb_\partial(M,\widetilde{N})_{\tilde{\alpha}}\subset T_\infty \Emb_\partial(M,\widetilde{N})$.

\begin{lemma}\label{lem:lift} In this situation, there exists a dashed map making the diagram
	\[\begin{tikzcd} \Emb_{\partial_0}(M,N)_{\alpha} \rar \dar &  \Emb_{\partial_0}(M,\widetilde{N})_{\tilde{\alpha}} \dar \\
		T_\infty \Emb_{\partial_0}(M,N)_{\alpha} \rar[dashed] & T_\infty \Emb_{\partial_0}(M,\widetilde{N})_{\tilde{\alpha}} \end{tikzcd}\]
	commute up to homotopy.  Here the top map is given by sending an embedding $\beta \in\Emb_\partial(M,N)_\alpha$ to its unique lift $\widetilde{\beta}\in\Emb_\partial(M,\widetilde{N})_{\tilde{\alpha}}$ extending $\tilde{e}_\partial$.
\end{lemma}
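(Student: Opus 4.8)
The plan is to factor the lift map through an auxiliary presheaf interpolating between $\Emb_{\partial_0}(-,N)$ and $\Emb_{\partial_0}(-,\widetilde{N})$, and to obtain the dashed map by inverting a weak equivalence produced by $T_\infty$.

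On $\cat{Disc}_{\partial_0 M}$ I would consider the presheaf
\[
	\mathcal{P}\coloneqq \Emb_{\partial_0}(-,N)\times_{\Map_{\partial_0}(-,N)}\Map_{\partial_0}(-,\widetilde{N}),
\]
where the second map is postcomposition with $\pi$. Since $\pi$ is a covering map, postcomposition with it is an objectwise Hurewicz fibration (the objects of $\cat{Disc}_{\partial_0 M}$ are locally compact), so $\mathcal{P}$ is an objectwise homotopy pullback. The first input is the elementary observation that a continuous lift along a covering map of a smooth embedding is again a smooth embedding: injectivity and the immersion property are immediate from $\pi\circ(\text{lift})$ being an embedding and $\pi$ being a local diffeomorphism, and continuity of the inverse on the image follows from first countability by a sequence argument. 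This identifies $\mathcal{P}$ with the subpresheaf of $\Emb_{\partial_0}(-,\widetilde{N})$ consisting of those embeddings into $\widetilde{N}$ whose composite with $\pi$ is again an embedding; in particular one obtains a subpresheaf inclusion $\iota\colon\mathcal{P}\hookrightarrow\Emb_{\partial_0}(-,\widetilde{N})$ as well as a projection $p\colon\mathcal{P}\to\Emb_{\partial_0}(-,N)$.

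Next I would apply $T_\infty$. By \cref{sec:formal-properties-mfd} \ref{homotopy-limits} it preserves homotopy limits, and $\Map_{\partial_0}(-,N)$ and $\Map_{\partial_0}(-,\widetilde{N})$ are homotopy $\cJ_1$-sheaves — mapping into a fixed space takes homotopy colimits of spaces to homotopy limits, and every triad is the homotopy colimit rel $\partial_0$ of the \v{C}ech nerve of any of its open covers — hence they are their own $T_\infty$ by \cref{sec:formal-properties-mfd} \ref{descent}. So $T_\infty\mathcal{P}$ is the homotopy pullback of $T_\infty\Emb_{\partial_0}(-,N)\to\Map_{\partial_0}(-,N)\leftarrow\Map_{\partial_0}(-,\widetilde{N})$. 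Evaluating at $M$ and restricting, compatibly with the maps to $\Map_{\partial_0}(M,-)$ from \eqref{equ:factorisation-to-maps}, to the parts lying over $[\alpha]$ and $[\tilde\alpha]$, the bottom map becomes $\Map_{\partial_0}(M,\widetilde{N})_{\tilde\alpha}\to\Map_{\partial_0}(M,N)_\alpha$, which is a weak equivalence: it is the restriction of the covering-induced fibration $\Map_{\partial_0}(M,\widetilde{N})\to\Map_{\partial_0}(M,N)$, whose fibre over a map in the component of $\alpha$ is the set of lifts extending $\tilde e_{\partial_0}$, a single point by the assumed $0$-connectivity of $\partial_0 M\to M$. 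Consequently the opposite map in the (restricted) homotopy pullback square, namely
\[
	(T_\infty p)(M)\colon (T_\infty\mathcal{P})(M)_{\tilde\alpha}\ \overset{\simeq}{\lra}\ T_\infty\Emb_{\partial_0}(M,N)_\alpha,
\]
is a weak equivalence as well. I would then define the dashed map as the composite of a homotopy inverse of this equivalence with $(T_\infty\iota)(M)\colon (T_\infty\mathcal{P})(M)_{\tilde\alpha}\to T_\infty\Emb_{\partial_0}(M,\widetilde{N})_{\tilde\alpha}$, which lands in the component over $[\tilde\alpha]$ because $\iota$ is compatible with the maps to $\Map_{\partial_0}(-,\widetilde{N})$.

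For commutativity, one observes that the lift map of the statement factors as the inclusion $\iota(M)$ precomposed with the inverse of the homeomorphism $p(M)\colon\mathcal{P}(M)_{\tilde\alpha}\xrightarrow{\ \cong\ }\Emb_{\partial_0}(M,N)_\alpha$: indeed $\mathcal{P}(M)_{\tilde\alpha}$ is exactly the space of embeddings $M\hookrightarrow\widetilde{N}$ in the component of $\tilde\alpha$ whose composite with $\pi$ is an embedding, which by covering-space theory (and the $0$-connectivity of $\partial_0 M\to M$) is precisely the space of lifts, extending $\tilde e_{\partial_0}$, of embeddings in the component of $\alpha$. The square of the statement then commutes up to homotopy by two applications of the naturality of the embedding calculus transformation $\mathrm{id}\Rightarrow T_\infty$ — with respect to $p$ and with respect to $\iota$ — together with this factorisation. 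The main point requiring care is the bookkeeping with path components: since $\pi_0$ does not commute with homotopy pullbacks in general, one must restrict the homotopy pullback square defining $(T_\infty\mathcal{P})(M)$ to the relevant components of its $\Map$-corners before drawing conclusions, and verify this restriction is compatible with the maps to $\Map_{\partial_0}(M,-)$ appearing in \eqref{equ:factorisation-to-maps}; the remaining verifications are routine.
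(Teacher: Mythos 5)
Your proposal is correct and takes essentially the same approach as the paper. Your presheaf $\mathcal{P}$ is the paper's $\Emb^\pi_{\partial_0}(-,\widetilde{N})$ (embeddings into $\widetilde{N}$ whose composite with $\pi$ is an embedding) — you define it as the strict pullback and then identify it with this subpresheaf, while the paper defines the subpresheaf directly and then exhibits it as the pullback; after that the two arguments (apply $T_\infty$, use preservation of homotopy limits and that the $\Map$-presheaves are $\cJ_1$-sheaves, restrict to the $\alpha$- and $\tilde\alpha$-components, and factor the lift map through $\mathcal{P}$ via $\iota$ and a homotopy inverse of $T_\infty p$) coincide.
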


\begin{proof}Let $\Emb^\pi_{\partial_0}(-,\widetilde{N}) \subset \Emb_{\partial_0}(-,\widetilde{N})$ be the presheaf on $\cat{Disc}_{\partial_0}$ of those embeddings that remain an embedding after composition with $\pi$. This fits in a pullback diagram
	\[\begin{tikzcd} \Emb^\pi_{\partial_0}(-,\widetilde{N}) \rar{\pi \circ -} \dar &[10pt] \Emb_{\partial_0}(-,N) \dar \\
	\Map_{\partial_0}(-,\widetilde{N}) \rar{\pi \circ -} & \Map_{\partial_0}(-,N) \end{tikzcd}\]
	of presheaves on $\cat{Disc}_{\partial_0M}$ whose vertical maps are given by inclusion. This is homotopy cartesian in the projective model structure on $\cat{PSh}(\cat{Disc}_{\partial_0})$, since $(\pi \circ -) \colon \Map_{\partial_0}(-,\widetilde{N}) \to \Map_{\partial_0}(-,N)$ is a objectwise fibration by the lifting property of covering maps. Evaluating at $M$ and using that $T_\infty(-)$ preserves homotopy limits by \cref{sec:formal-properties-mfd} \ref{homotopy-limits}, we arrive at a commutative cube
		\[\begin{tikzcd} \Emb^\pi_{\partial_0}(M,\widetilde{N}) \arrow{rr} \arrow{dd} \arrow{rd} &[-35pt] &[-35pt] \Emb_{\partial_0}(M,N)  \arrow{dd} \arrow{rd} &[-35pt] \\
		& T_\infty  \Emb^\pi_{\partial_0}(M,\widetilde{N}) \arrow{rr}  & & T_\infty \Emb_{\partial_0}(M,N)  \arrow{dd}  \\
		\Map_{\partial_0}(M,\widetilde{N}) \arrow[two heads]{rr} \arrow{rd}{\simeq} & & \Map_{\partial_0}(M,N) \arrow{rd}{\simeq}  & \\
		& T_\infty \Map_{\partial_0}(M,\widetilde{N}) \arrow[from=uu,crossing over]  \arrow[two heads]{rr} & & T_\infty\Map_{\partial_0}(M,N) \end{tikzcd}
	\]
	with front and back faces homotopy cartesian, and bottom diagonal maps weak equivalences as $\Map_{\partial_0}(-,\widetilde{N})$ and $\Map_{\partial_0}(-,N)$ are homotopy $\cJ_1$-sheaves (see \cref{sec:formal-properties-mfd} \ref{descent}). By the uniqueness of lifts (this uses  that $\partial_0 M \to M$ is $0$-connected), the bottom horizontal maps become weak equivalences when we restrict domain and target to the path components of $[\tilde{\alpha}]$ and $[\alpha]$ respectively. Doing so and using the homotopy pullback property, the top of the cube provides a commutative square		\[
		\begin{tikzcd} \Emb^\pi_{\partial_0}(M,\widetilde{N})_{\tilde{\alpha}} \rar{\simeq} \dar &[10pt] \Emb_{\partial_0}(M,N)_{\alpha} \dar \\
		T_\infty \Emb^\pi_{\partial_0}(M,\widetilde{N})_{\tilde{\alpha}} \rar{\simeq} &[10pt] T_\infty \Emb_{\partial_0}(M,N)_{\alpha}\end{tikzcd}
		\]
		 with horizontal weak equivalences.
The top map is even a homeomorphism, by the uniqueness of lifts. Using the inclusion of presheaves $\Emb^\pi_{\partial_0}(-,\widetilde{N}) \subset \Emb_{\partial_0}(-,\widetilde{N})$, we obtain a commutative diagram
		\[\begin{tikzcd} \Emb_{\partial_0}(M,N)_{\alpha} \dar & \lar[swap]{\cong} \Emb^\pi_{\partial_0}(M,\widetilde{N})_{\tilde{\alpha}} \rar \dar & \Emb_{\partial_0}(M,\widetilde{N})_{\tilde{\alpha}} \dar \\
		T_\infty \Emb_{\partial_0}(M,N)_{\alpha} & \lar[swap]{\simeq} T_\infty \Emb^\pi_{\partial_0}(M,\widetilde{N})_{\tilde{\alpha}} \rar & T_\infty \Emb_{\partial_0}(M,\widetilde{N})_{\tilde{\alpha}}.\end{tikzcd}\]
		whose top composition is given by sending an embedding to its unique lift extending $\tilde{e}_\partial$, so we obtain a map $T_\infty \Emb_{\partial_0}(M,N)_{\alpha}\ra T_\infty \Emb_{\partial_0}(M,\widetilde{N})_{\tilde{\alpha}}$ as desired.
\end{proof}

\begin{remark}If $\alpha$ has no lift, then there is no component of $\Emb_{\partial_0}(M,\widetilde{N})$ mapping to $[\alpha]$ under composition with $\pi$. In this case, the above argument shows that there is also no component of $T_\infty \Emb_{\partial_0}(M,\widetilde{N})$ mapping to $[\alpha]$ under the map of \cref{sec:formal-properties} \ref{mapping-space} and composition with $\pi$.\end{remark}

\subsection{Adding a collar to the source}
The third property concerns the behaviour of embedding calculus when adding a disjoint collar to the domain.

We fix triads $M$ and $N$ and a boundary condition $e_{\partial_0} \colon \partial_0 M \hookrightarrow \partial_0 N$. Given a compact $(\dim(M)-1)$-manifold $K$, we replace $M$ by the triad $M \sqcup K \times [0,1)$ with $\partial_0(M \sqcup K \times [0,1)) = \partial_0 M \sqcup K \times \{0\}$ and fix an extension $e'_{\partial_0} \colon \partial_0(M \sqcup K \times [0,1)) \hookrightarrow \partial N$ of $e_{\partial_0}$ as boundary condition. By contractibility of the space of collars, the restriction map 
	\[\Emb_{\partial_0 M \sqcup K \times \{0\}}(M \sqcup K \times [0,1),N) \lra \Emb_{\partial_0}(M,N)\]
is a weak equivalence. Embedding calculus has this property as well:

\begin{lemma}\label{lem:forget-collar} In this situation, both horizontal maps in the diagram induced by restriction
	\[\begin{tikzcd} \Emb_{\partial_0 M \sqcup K \times \{0\}}(M \sqcup K \times [0,1),N) \rar{\simeq} \dar &  \Emb_{\partial_0M}(M,N) \dar \\
		T_\infty \Emb_{\partial_0 M \sqcup K \times \{0\}}(M \sqcup K \times [0,1),N) \rar{\simeq} & T_\infty \Emb_{\partial_0M}(M,N),\end{tikzcd}\]
	 are weak equivalences.
\end{lemma}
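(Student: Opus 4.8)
The upper horizontal map is the weak equivalence recorded just before the statement, which comes from the contractibility of the space of collars, and all four maps in the square are restriction maps, so the square commutes strictly. It therefore remains to show that the lower horizontal map is a weak equivalence, and the plan is to deduce this by descent. Abbreviate $M' \coloneqq M \sqcup K \times [0,1)$, so $\partial_0 M' = \partial_0 M \sqcup K \times \{0\}$.

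First I would take $\cV$ to be the cover of $M$ consisting of all open subsets that are diffeomorphic, as triads, to a collar on $\partial_0 M$ disjoint union a finite collection of open discs; this is a complete Weiss $\infty$-cover (the standard kind in the Boavida--Weiss framework). I claim that $\cU \coloneqq \{\, V \sqcup K \times [0,1) \mid V \in \cV \,\}$ is a complete Weiss $\infty$-cover of $M'$. Indeed, each member contains a collar on $\partial_0 M'$, namely a collar on $\partial_0 M$ coming from $V$ together with $K \times [0,\epsilon) \subset K \times [0,1)$; every finite subset of $\mr{int}(M') = \mr{int}(M) \sqcup K \times (0,1)$ lies in some member, since its part in $\mr{int}(M)$ does (as $\cV$ is a Weiss $\infty$-cover of $M$) and the set $K \times (0,1)$ lies in every member; and completeness is inherited from that of $\cV$ because a finite intersection of members of $\cU$ is $\big(\bigcap_i V_i\big) \sqcup K \times [0,1)$.

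The strong form of descent in \cref{sec:formal-properties-mfd}~\ref{descent} (via \cite[Lemma 6.7]{KnudsenKupers}) then gives weak equivalences, compatible with the restriction map under consideration,
\[T_\infty \Emb_{\partial_0 M}(M,N) \xlra{\simeq} \underset{V \in \cV}\holim\, T_\infty \Emb_{\partial_0 M}(V,N),\]
\[T_\infty \Emb_{\partial_0 M'}(M',N) \xlra{\simeq} \underset{V \in \cV}\holim\, T_\infty \Emb_{\partial_0 M'}(V \sqcup K \times [0,1),N),\]
where the poset $\cU$ is identified with $\cV$ via $V \leftrightarrow V \sqcup K \times [0,1)$. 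So it suffices to show that for each $V \in \cV$ the restriction map $T_\infty \Emb_{\partial_0 M'}(V \sqcup K \times [0,1),N) \to T_\infty \Emb_{\partial_0 M}(V,N)$ is a weak equivalence. Now both $V$ and $V \sqcup K \times [0,1)$ are, as triads, a collar (on $\partial_0 M$, respectively on $\partial_0 M' = \partial_0 M \sqcup K \times \{0\}$) disjoint union a finite collection of open discs --- for $V \sqcup K \times [0,1)$ because a collar on $\partial_0 M$ together with $K \times [0,1)$ is precisely a collar on $\partial_0 M'$. Hence convergence on such objects (\cref{sec:formal-properties}~\ref{convergence}) shows that $\Emb \to T_\infty \Emb$ is a weak equivalence for both, and the claim reduces to the restriction map $\Emb_{\partial_0 M'}(V \sqcup K \times [0,1),N) \to \Emb_{\partial_0 M}(V,N)$ being a weak equivalence. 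This is once more the contractibility of the space of collars: it is the fact recorded before the statement, applied with $V$ in place of $M$.

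The argument uses no new ideas beyond the properties already recorded in \cref{sec:formal-properties-mfd} and \cref{sec:formal-properties}; the only point that needs care is the bookkeeping with the two disc categories $\cat{Disc}_{\partial_0 M}$ and $\cat{Disc}_{\partial_0 M'}$ and their boundary conditions, together with the verification that $\cU$ is a complete Weiss $\infty$-cover of $M'$.
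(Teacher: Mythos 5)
Your proof is correct and follows essentially the same route as the paper: both arguments use descent for the same complete Weiss $\infty$-cover (your $\cU$ is the paper's $\cU$, indexed by your $\cV = \cU'$), apply convergence on collar-plus-discs objects to replace $T_\infty\Emb$ by $\Emb$ on each cover element, and then invoke contractibility of the space of collars. The only cosmetic difference is that you build the cover of $M'$ from a cover of $M$, whereas the paper starts with a cover of $M'$ and intersects with $M$.
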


\begin{proof} Let $\cU$ be the open cover of $M \sqcup K \times [0,1)$ given by subsets of the form $U = V \sqcup K \times [0,1)$ where $V \subset M$ is the union of a open subset diffeomorphic to a collar on $\partial_0 M$ and a finite disjoint union of open discs. This is a complete Weiss $\infty$-cover of $M \sqcup K \times [0,1)$, and $\cU' = \{U \cap M \mid U \in \cU\}$ is a complete Weiss $\infty$-cover of $M$. Restriction thus induces a commutative diagram
\[\begin{tikzcd} \Emb_{\partial_0 M \sqcup K \times \{0\}}(M \sqcup K \times [0,1),N) \rar \dar & \holim_{U \in \cU} \Emb_{\partial_0 M \sqcup K \times \{0\}}(U,N) \dar{\simeq} \\
	T_\infty \Emb_{\partial_0 M \sqcup K \times \{0\}}(M \sqcup K \times [0,1),N) \rar{\simeq} & \holim_{U \in \cU} T_\infty \Emb_{\partial_0 M \sqcup K \times \{0\}}(U,N)\end{tikzcd}\]	
	whose bottom horizontal map is a weak equivalences by \cref{sec:formal-properties-mfd} \ref{descent} and whose right vertical map is a weak equivalence by \cref{sec:formal-properties} \ref{convergence}. Similarly we have a square
	\[\begin{tikzcd} \Emb_{\partial_0M}(M ,N) \rar \dar & \holim_{U \in \cU} \Emb_{\partial_0M}(U \cap M,N) \dar{\simeq} \\
		T_\infty \Emb_{\partial_0M}(M,N) \rar{\simeq} & \holim_{U \in \cU} T_\infty \Emb_{\partial_0M}(U \cap M,N),\end{tikzcd}\]	
	which receives a map from the former square by restriction, so it suffices to show that the maps
	\[\Emb_{\partial_0 M \sqcup K \times \{0\}}(U,N) \lra \Emb_{\partial_0M}(U \cap M,N)\]
	are weak equivalence. This follows from the contractibility of spaces of collars.
\end{proof}

Combined with \cref{lem:remove-discs} this yields the following lemma, which is often useful to justify the hypothesis needed to apply isotopy extension for embedding calculus (see \cref{sec:formal-properties} \ref{isotopy-extension}).

\begin{lemma}\label{lem:thick-arc-discs}Let $M$ and $N$ be $d$-dimensional triads, and $e_{\partial_0}\colon \partial_0 M\hookrightarrow \partial_0 N$ a boundary condition. Then the map 
	\[\Emb_{\partial_0}(M \sqcup (T \times \bR^d),N) \lra T_\infty \Emb_{\partial_0}(M\sqcup (T \times \bR^d),N)\]
	is a weak equivalence for any finite set $T$, if the maps $\Emb_{\partial_0}(M,N') \to T_\infty \Emb_{\partial_0}(M,N')$ are weak equivalences for all $d$-dimensional triads $N'$ and all boundary conditions $e'_{\partial_0}\colon \partial_0 M \hookrightarrow \partial_0 N'$.
\end{lemma}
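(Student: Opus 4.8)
The plan is to induct on the cardinality of $T$, using the geometric observation that deleting a closed disc from the interior of one of the $\bR^d$-summands of $M\sqcup(T\times\bR^d)$ turns that summand into a collar $S^{d-1}\times[0,1)$, which can then be discarded via \cref{lem:forget-collar}. So the two ingredients are \cref{lem:remove-discs} and \cref{lem:forget-collar}, used in tandem.

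Concretely, I would fix $M$ and treat as a standing hypothesis the statement $(\star)$ that $\Emb_{\partial_0}(M,N')\to T_\infty\Emb_{\partial_0}(M,N')$ is a weak equivalence for every $d$-dimensional triad $N'$ and every boundary condition $\partial_0 M\hookrightarrow\partial_0 N'$. The claim to prove by induction on $n=|T|$ is then: for all $d$-dimensional triads $N$ and boundary conditions $\partial_0 M\hookrightarrow\partial_0 N$, the map $\Emb_{\partial_0}(M\sqcup(T\times\bR^d),N)\to T_\infty\Emb_{\partial_0}(M\sqcup(T\times\bR^d),N)$ is a weak equivalence (note $\partial_0(M\sqcup(T\times\bR^d))=\partial_0 M$, so the boundary condition is unchanged). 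The base case $n=0$ is exactly $(\star)$ applied with $N'=N$.

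For the inductive step, assume the statement for $n$, let $|T'|=n+1$, fix $t_0\in T'$, put $T=T'\setminus\{t_0\}$, and let $D^d\subset\bR^d_{t_0}\subset\mr{int}(M\sqcup(T'\times\bR^d))$ be the closed unit disc in the $t_0$-th summand. By \cref{lem:remove-discs}, the comparison map for $\Emb_{\partial_0}(M\sqcup(T'\times\bR^d),N)$ is a weak equivalence provided that, for every embedding $e\colon D^d\hookrightarrow\mr{int}(N)$, the comparison map for $\Emb_{\partial_0}\big((M\sqcup(T'\times\bR^d))\setminus\mr{int}(D^d),\,N\setminus\mr{int}(e(D^d))\big)$ is a weak equivalence. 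Now $\bR^d_{t_0}\setminus\mr{int}(D^d)\cong S^{d-1}\times[0,1)$ as a triad, with $\partial_0$-boundary $\partial D^d = S^{d-1}\times\{0\}$, so $(M\sqcup(T'\times\bR^d))\setminus\mr{int}(D^d)$ is diffeomorphic as a triad to $(M\sqcup(T\times\bR^d))\sqcup(S^{d-1}\times[0,1))$, while $N':=N\setminus\mr{int}(e(D^d))$ is again a $d$-dimensional triad (with $\partial_0 N'=\partial_0 N\cup\partial e(D^d)$). Applying \cref{lem:forget-collar} with source manifold $M\sqcup(T\times\bR^d)$, collar on $K=S^{d-1}$, and target $N'$, this comparison map is a weak equivalence if and only if the comparison map for $\Emb_{\partial_0}(M\sqcup(T\times\bR^d),N')$ is; and the latter holds by the inductive hypothesis applied to the triad $N'$ with boundary condition $e_{\partial_0}\colon\partial_0 M\hookrightarrow\partial_0 N\subset\partial_0 N'$. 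This closes the induction, and the lemma follows by taking $T'$ to be the given finite set.

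I do not expect a substantive obstacle; the argument is essentially bookkeeping that chains \cref{lem:remove-discs} and \cref{lem:forget-collar}. The only point deserving care is matching up the triad structures and boundary conditions under the diffeomorphism $\bR^d_{t_0}\setminus\mr{int}(D^d)\cong S^{d-1}\times[0,1)$ — in particular verifying that the new $S^{d-1}$ boundary component is placed into the $\partial_0$-part consistently with the conventions of both lemmas, so that they compose in the way indicated.
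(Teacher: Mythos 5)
Your proof is correct and follows essentially the same route as the paper: induct on $|T|$, use \cref{lem:remove-discs} to delete a closed disc from one $\bR^d$-summand, recognise $\bR^d\setminus\mathrm{int}(D^d)\cong S^{d-1}\times[0,1)$ as a collar, and discard it via \cref{lem:forget-collar} to reduce to the inductive hypothesis. The paper phrases this as "by induction it suffices to prove the case $|T|=1$" and then runs exactly your inductive step in that case; the content is identical, and your bookkeeping of triad structures and boundary conditions matches the conventions used in the two cited lemmas.
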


\begin{proof}By induction over $|T|$ it suffices to prove the case $|T|=1$. In that case, it suffices by \cref{lem:remove-discs} to prove that for all embeddings $e \colon D^d \hookrightarrow \mr{int}(N)$ the map
	\[\Emb_\partial\big(M \sqcup (\bR^d{\setminus} \mr{int}(D^d)),N{\setminus}\mr{int}(e(D^d))\big) \lra T_\infty \Emb_\partial\big(M\sqcup (\bR^d{\setminus} \mr{int}(D^d)),N{\setminus}\mr{int}(e(D^d)\big)\]
is a weak equivalences. By  \cref{lem:forget-collar} we may then forget the collars $(\bR^d{\setminus} \mr{int}(D^d))$ on $\partial D^d$ from the source, so the result follows.
\end{proof}

\subsection{Taking disjoint unions}The fourth and final general property of embedding calculus we shall discuss concerns taking disjoint unions in source and target. Its full strength is not needed to prove the main results of this paper---only \cref{cor:add-components} is---but we believe it to be of independent interest. 

Let $M$, $M'$, $N$, and $N'$ be triads with $\dim(M) = \dim(M')$ and $\dim(N) = \dim(N')$. Given boundary conditions $e_{\partial_0} \colon \partial_0 M \hookrightarrow \partial_0 N$ and $e'_{\partial_0} \colon \partial_0 M' \hookrightarrow \partial_0 N'$, we consider the boundary condition $e_{\partial_0} \sqcup e'_{\partial_0} \colon \partial_0(M \sqcup M') \hookrightarrow \partial_0(N \sqcup N')$. Disjoint union of embeddings induces 
\[\Emb_{\partial_0}(M,N) \times \Emb_{\partial_0}(M',N') \lra \Emb_{\partial_0}(M \sqcup M',N \sqcup N')\]
which is a weak equivalence (in fact, a homeomorphism) if both inclusions $\partial_0 M \hookrightarrow M$ and $\partial_0 M' \hookrightarrow M'$ are $0$-connected. Embedding calculus has this property as well:

\begin{lemma}\label{lem:disjoint-unions} In this situation, there is a dashed weak equivalence that makes
	\[\begin{tikzcd} \Emb_{\partial_0}(M,N) \times \Emb_{\partial_0}(M',N') \rar{\simeq} \dar &  \Emb_{\partial_0}(M \sqcup M',N \sqcup N') \dar \\
		T_\infty \Emb_{\partial_0}(M,N) \times T_\infty \Emb_{\partial_0}(M',N') \rar[dashed]{\simeq} &  T_\infty \Emb_{\partial_0}(M \sqcup M',N \sqcup N') \end{tikzcd}\]
		commute up to preferred homotopy.
\end{lemma}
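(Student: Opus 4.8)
The plan is to reduce both sides to homotopy limits over one common cover and then compare them through the induced square of mapping spaces, where the hypothesis on $\partial_0 M$ and $\partial_0 M'$ makes everything collapse.

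First I would pick the complete Weiss $\infty$-cover $\cU$ of $M \sqcup M'$ whose members are the open sets $U = V \sqcup V'$, with $V \subseteq M$ a union of an open collar on $\partial_0 M$ and finitely many open discs and $V' \subseteq M'$ of the same form for $\partial_0 M'$ --- the standard shape of cover used in the proofs of \cref{lem:thickened-embeddings,lem:forget-collar}. As a poset $\cU$ is the product $\cU_M \times \cU_{M'}$ of the corresponding covers of $M$ and of $M'$. Descent along complete Weiss $\infty$-covers (\cref{sec:formal-properties-mfd} \ref{descent}) together with convergence on disjoint unions of collars and discs (\cref{sec:formal-properties} \ref{convergence}) gives weak equivalences
\[ T_\infty\Emb_{\partial_0}(M \sqcup M', N \sqcup N') \simeq \holim_{(V,V') \in \cU_M \times \cU_{M'}} \Emb_{\partial_0}(V \sqcup V', N \sqcup N') \]
and, using the same two facts for $M$ and for $M'$ separately together with the compatibility of homotopy limits with products,
\[ T_\infty\Emb_{\partial_0}(M,N) \times T_\infty\Emb_{\partial_0}(M',N') \simeq \holim_{(V,V') \in \cU_M \times \cU_{M'}} \big(\Emb_{\partial_0}(V,N) \times \Emb_{\partial_0}(V',N')\big). \]
Disjoint union of embeddings defines a map of $(\cU_M \times \cU_{M'})$-diagrams from the second to the first, and applying $\holim$ and transporting along the two identifications produces the dashed map; it makes the square of the lemma commute up to preferred homotopy because disjoint union of embeddings commutes strictly with restriction.

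To see that the dashed map is a weak equivalence I would, for each $(V,V')$, consider the square
\[
\begin{tikzcd}
\Emb_{\partial_0}(V,N)\times\Emb_{\partial_0}(V',N') \rar \dar & \Emb_{\partial_0}(V\sqcup V',N\sqcup N') \dar \\
\Map_{\partial_0}(V,N)\times\Map_{\partial_0}(V',N') \rar & \Map_{\partial_0}(V\sqcup V',N\sqcup N')
\end{tikzcd}
\]
whose vertical maps send an embedding to its underlying map. It is a pullback of spaces, and because both horizontal maps are the inclusions of the unions of path components consisting of those (under-)maps sending $V$ into $N$ and $V'$ into $N'$, it is a homotopy pullback. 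Applying $\holim_{(V,V')}$ --- which preserves homotopy pullbacks (\cref{sec:formal-properties-mfd} \ref{homotopy-limits}) --- the top row becomes the dashed map by the first step, while, since $\Map_{\partial_0}(-,-)$ is a homotopy sheaf, the bottom row becomes the disjoint-union map $\Map_{\partial_0}(M,N) \times \Map_{\partial_0}(M',N') \to \Map_{\partial_0}(M \sqcup M', N \sqcup N')$. The hypothesis that $\partial_0 M \hookrightarrow M$ and $\partial_0 M' \hookrightarrow M'$ are $0$-connected forces every map $M \to N \sqcup N'$ (respectively $M' \to N \sqcup N'$) extending the given boundary condition to land in $N$ (respectively $N'$), so this disjoint-union map is a homeomorphism; hence the top row, which is the dashed map, is a weak equivalence. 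Running the vertical comparison maps through the same square also records the compatibility of the dashed map with the comparison maps to mapping spaces, if desired.

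The step I expect to be the main obstacle is the one handled by the homotopy pullback: the probes $V \sqcup V'$ genuinely embed into both components of $N \sqcup N'$, so $\Emb_{\partial_0}(V \sqcup V', N \sqcup N')$ is strictly larger than $\Emb_{\partial_0}(V,N) \times \Emb_{\partial_0}(V',N')$ --- it carries extra path components in which some disc of $V$ lands in $N'$ --- and the content of the lemma is that these extra components die in the homotopy limit. Pinning this down by comparison with mapping spaces, where $0$-connectedness makes it transparent, is the crux; along the way I would want to check carefully that the relevant inclusions of mapping spaces really are inclusions of unions of path components (so that the levelwise squares are homotopy pullbacks) and that $\cU$ is genuinely a complete Weiss $\infty$-cover whose pieces split as claimed.
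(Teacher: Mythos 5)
Your proof is correct and follows essentially the same route as the paper's: pick the product cover $\cU_{M}\times\cU_{M'}$, use descent plus convergence on disc-and-collar pieces to rewrite both sides as homotopy limits over it, and pass to a (levelwise) homotopy pullback against mapping spaces where the $0$-connectedness hypothesis makes the bottom map a homeomorphism. The paper merely packages the same argument by naming the levelwise subspaces as presheaves $\Emb^{\sqcup}_{\partial_0}(-,N\sqcup N')$ and $\Map^{\sqcup}_{\partial_0}(-,N\sqcup N')$ before taking homotopy limits, but the content — including the observation you flag as worth checking, that the inclusions are inclusions of unions of path components and hence levelwise fibrations — matches.
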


\begin{proof}As in the proof of \cref{lem:forget-collar}, the property of embedding calculus we shall use is descent for complete Weiss $\infty$-covers (see \cref{sec:formal-properties-mfd} \ref{descent}). 
	
We take $\cU_M$ to be the open cover of $M$ given by open subsets $U \subset M$ that are diffeomorphic to a collar on $\partial_0 M$ and a finite disjoint union of open discs, and similarly for $\cU_{M'}$. We take $\cU_{M \sqcup M'}$ to be the open cover of $M \sqcup M'$ given by unions of an element of $\cU_M$ and an element of $\cU_{M'}$. The covers $\cU_M$, $\cU_{M'}$, and $\cU_{M\sqcup M'}$ are all complete Weiss $\infty$-covers. 

We consider $\cU_{M \sqcup M'}$ as a poset ordered by inclusion and let $\Emb^\sqcup_{\partial_0}(-,N \sqcup N')$ be the presheaf on $\cU_{M \sqcup M'}$ that sends $U \sqcup U'$ with $U \in \cU_M$ and $U' \in \cU_{M'}$ to the subspace $\Emb^\sqcup_{\partial_0}(U \sqcup U',N \sqcup N') \subset \Emb_{\partial_0}(U \sqcup U',N \sqcup N')$ which map $U$ into $N$ and $U'$ into $N'$. Defining $\Map^\sqcup_{\partial_0}(-,N \sqcup N')$ similarly, we have a homotopy pullback diagram of presheaves on $\cU_{M \sqcup M'}$
	\begin{equation}\label{eqn:disjoint-union-pb}\begin{tikzcd}\Emb^\sqcup_{\partial_0}(-,N \sqcup N') \rar \dar & \Emb_{\partial_0}(-,N \sqcup N') \dar\\
	\Map^\sqcup_{\partial_0}(-,N \sqcup N') \rar & \Map_{\partial_0}(-,N \sqcup N'),\end{tikzcd}\end{equation}
	and this remains a homotopy pullback when taking homotopy limits over $\cU_{M \sqcup M'}$.

To identify the term
\[\underset{U \sqcup U' \in \cU_{M \sqcup M'}}{\holim} \Emb^\sqcup_{\partial_0}(U \sqcup U',N \sqcup N')\]
we note that there are isomorphisms $\cU_{M \sqcup M'} \cong \cU_M \times \cU_{M'}$ of categories, and $\Emb^\sqcup_{\partial_0}(-,N \sqcup N')\cong \Emb_{\partial_0}(-,N)\times \Emb_{\partial_0}(-, N')$ of presheaves, so the Fubini theorem for homotopy limits implies that this homotopy limit is given by
\[\underset{U \in \cU_M}{\holim}\, \Emb_{\partial_0}(U,N) \times \underset{U' \in \cU_M}{\holim}\, \Emb_{\partial_0}(U',N').\]
Combining descent with the fact that embedding calculus converges on $U \in \cU_M$ and $U' \in \cU_{M'}$ by \cref{sec:formal-properties} \ref{convergence}, we conclude that
\[\underset{U \sqcup U' \in \cU_{M \sqcup M'}}{\holim} \Emb^\sqcup_{\partial_0}(U \sqcup U',N \sqcup N') \simeq T_\infty \Emb_{\partial_0}(M,N) \times T_\infty \Emb_{\partial_0}(M',N').\]
The same analysis holds for $\Map^\sqcup_{\partial_0}(-,M \sqcup M')$ and since this is a homotopy $\cJ_1$-sheaf (see \cref{sec:formal-properties-mfd} \ref{descent}), we conclude that
\[\underset{U \sqcup U' \in \cU_{M \sqcup M'}}{\holim} \Map^\sqcup_{\partial_0}(U \sqcup U',N \sqcup N') \simeq \Map_{\partial_0}(M,N) \times \Map_{\partial_0}(M',N').\]
By the same argument (using descent, convergence on $U \sqcup U' \in \cU_{M \sqcup M'}$, and that $\Map_{\partial_0}(-,N \sqcup N')$ is a homotopy $\cJ_1$-sheaf), we have weak equivalences
\begin{align*}
\holim_{U \sqcup U' \in \cU_{M \sqcup M'}}\Emb_{\partial_0}(U \sqcup U',N \sqcup N')&\simeq T_\infty\Emb_{\partial_0}(M\sqcup M',N\sqcup N') \quad \text{and}\\
\holim_{U \sqcup U' \in \cU_{M \sqcup M'}}\Map_{\partial_0}(U \sqcup U',N \sqcup N')&\simeq \Map_{\partial_0}(M\sqcup M',N\sqcup N'),\end{align*}
so altogether we obtain a homotopy pullback diagram of the form
\[\begin{tikzcd} T_\infty \Emb_{\partial_0}(M,N) \times T_\infty \Emb_{\partial_0}(M',N') \rar \dar & T_\infty \Emb_{\partial_0}(M \sqcup M',N \sqcup N') \dar \\
\Map_{\partial_0}(M,N) \times \Map_{\partial_0}(M',N') \rar & \Map_{\partial_0}(M \sqcup M',N \sqcup N'). \end{tikzcd}\]
The condition that $\partial_0 M \hookrightarrow M$ and $\partial_0 M' \hookrightarrow M'$ are  $0$-connected implies that the bottom map is a weak equivalence, so the top map is a weak equivalence as well. The proof is finished by tracing through the weak equivalences to see that this makes the square in the statement homotopy commute.
\end{proof}

Taking $M' = \varnothing$, which is the only case used in this paper, \cref{lem:disjoint-unions} says:

\begin{corollary}\label{cor:add-components} In this situation, in the diagram induced by the inclusion $N\hookrightarrow N\sqcup N'$
	\[\begin{tikzcd} \Emb_{\partial_0}(M,N) \rar{\simeq} \dar &  \Emb_{\partial_0}(M,N \sqcup N') \dar \\
		T_\infty \Emb_{\partial_0}(M,N) \rar{\simeq} & T_\infty \Emb_{\partial_0}(M,N \sqcup N'), \end{tikzcd}\]
	both horizontal maps are weak equivalences.
\end{corollary}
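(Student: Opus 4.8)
The plan is to read the statement off from \cref{lem:disjoint-unions} by taking $M' = \varnothing$ with the empty boundary condition $\varnothing \hookrightarrow \partial_0 N'$. Then $M \sqcup M' = M$ as triads, the induced boundary condition on $\partial_0(M \sqcup M') = \partial_0 M$ is $e_{\partial_0}$ followed by the inclusion $\partial_0 N \subseteq \partial_0(N \sqcup N')$, and the requirement of \cref{lem:disjoint-unions} that $\partial_0 M' \hookrightarrow M'$ be $0$-connected is vacuous. So \cref{lem:disjoint-unions} produces a square
\[\begin{tikzcd} \Emb_{\partial_0}(M,N) \times \Emb_{\partial_0}(\varnothing,N') \rar{\simeq} \dar &  \Emb_{\partial_0}(M,N \sqcup N') \dar \\
 T_\infty \Emb_{\partial_0}(M,N) \times T_\infty \Emb_{\partial_0}(\varnothing,N') \rar[dashed]{\simeq} &  T_\infty \Emb_{\partial_0}(M, N \sqcup N'), \end{tikzcd}\]
which commutes up to preferred homotopy. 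It then remains to contract the two left-hand factors and to identify the horizontal maps with the ones in the statement.

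For the first task, $\Emb_{\partial_0}(\varnothing, N')$ is the one-point space containing the empty embedding, while $T_\infty \Emb_{\partial_0}(\varnothing, N')$ is contractible: the empty triad is (trivially) the disjoint union of a collar on $\partial_0 \varnothing = \varnothing$ and an empty collection of open discs, so embedding calculus converges on it (Property \ref{convergence}), giving $T_\infty \Emb_{\partial_0}(\varnothing, N') \simeq \Emb_{\partial_0}(\varnothing, N') \simeq \ast$. Using the canonical identifications $X \times \ast \cong X$, the top map becomes the map $\Emb_{\partial_0}(M, N) \to \Emb_{\partial_0}(M, N \sqcup N')$ induced by $N \hookrightarrow N \sqcup N'$ — disjoint union of an embedding with the empty embedding is exactly postcomposition with this inclusion — and the bottom dashed map becomes the corresponding map on $T_\infty\Emb_{\partial_0}(M,-)$, which is the one the statement refers to (e.g.\ via Property \ref{postcomposition}); matching the dashed map of \cref{lem:disjoint-unions} with this one is a matter of bookkeeping, using the compatibility already asserted in \cref{lem:disjoint-unions}. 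This produces the square in the statement, whose horizontal maps are weak equivalences by \cref{lem:disjoint-unions}.

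I do not expect a genuine obstacle, as the whole argument is a specialisation of \cref{lem:disjoint-unions}. The one hypothesis that is not automatic here is that $\partial_0 M \hookrightarrow M$ be $0$-connected, which is inherited from the standing situation; the only step warranting a (trivial) check is the contractibility of $T_\infty \Emb_{\partial_0}(\varnothing, N')$.
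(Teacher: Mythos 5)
Your proposal is exactly the paper's own derivation: the paper states the corollary as the specialisation $M' = \varnothing$ of \cref{lem:disjoint-unions}, and your argument — including the check that $T_\infty\Emb_{\partial_0}(\varnothing,N')$ is contractible via Property \ref{convergence} and the identification of the horizontal maps — is the correct bookkeeping the paper leaves implicit.
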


\begin{remark}\cref{cor:add-components} admits an alternative proof along the lines of \cref{lem:lift}: one observes there is a homotopy pullback diagram of presheaves on $\cat{Disc}_{\partial_0 M}$ given by 
	\[\begin{tikzcd} \Emb_{\partial_0}(-,N) \rar \dar & \Emb_{\partial_0}(-,N \sqcup N') \dar \\
		\Map_{\partial_0}(-,N) \rar & \Map_{\partial_0}(-,N \sqcup N').\end{tikzcd}\]
Taking $T_\infty$ and evaluating at $M$ yields a homotopy pullback diagram of spaces and if $\partial_0 M \to M$ is $0$-connected, the map $\Map_{\partial_0}(M,N) \to \Map_{\partial_0}(M,N \sqcup N')$ is a weak equivalence and hence so is the map $T_\infty \Emb_{\partial_0}(M,N) \to T_\infty \Emb_{\partial_0}(M, N \sqcup N')$.
\end{remark}

\section{Convergence in low dimensions}
In this section we make use the properties of embedding calculus discussed in the previous section to prove the following convergence result. \cref{athm:main-general} is included as the special case $\partial_0M=\partial M$.
\begin{theorem}
\label{thm:main-general-triads} 
	For compact manifolds triads $M$ and $N$ with $\dim(N)\le 2$, the map
		\[\Emb_{\partial_0}(M,N) \lra \ T_\infty \Emb_{\partial_0}(M,N)\]
	is a weak equivalence for any boundary condition $e_{\partial_0}\colon \partial_0 M\hookrightarrow \partial_0 N$.
\end{theorem}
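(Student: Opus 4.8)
The plan is to reduce the general case to a short list of elementary building blocks by systematically stripping away structure. The first reduction is dimensional: the cases $\dim(N) = 0$ and $\dim(N) = 1$ are essentially combinatorial. When $\dim(N) = 0$ the space $\Emb_{\partial_0}(M,N)$ is either empty or a point, and $T_\infty$ of it should be the same by direct inspection of the presheaf category. When $\dim(N) = 1$, a compact $1$-manifold is a disjoint union of intervals and circles; using \cref{cor:add-components} to reduce the target to a single component, and \cref{lem:remove-discs} together with the isotopy extension property to reduce $M$, one is left with a very small number of configurations (arcs and circles mapping to an arc or a circle) which can be checked by hand. So from now on assume $\dim(N) = 2$.

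The second reduction is to control the relationship between $\dim(M)$ and $\dim(N)$. If $\dim(M) < 2$, then \cref{lem:thick-arc-converse} lets me replace $M$ by a $(2-\dim M)$-dimensional vector bundle over $M$, i.e.\ a genuine surface (after possibly adding collars, handled by \cref{lem:forget-collar}), so it suffices to treat $\dim(M) = 2$. If $\dim(M) > 2$ then $\Emb_{\partial_0}(M,N) = \varnothing$ for dimension reasons, and one must separately argue $T_\infty \Emb_{\partial_0}(M,N) = \varnothing$; this should follow from the comparison to bundle maps (\cref{sec:formal-properties} \ref{mapping-space}), since $\Bun_{\partial_0}(TM, TN)$ is already empty when $\dim M > \dim N$. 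Thus the heart of the matter is $\dim(M) = \dim(N) = 2$, i.e.\ embeddings and diffeomorphisms of surfaces.

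The third and main reduction is an induction on the complexity of the surface $M$. Using \cref{lem:remove-discs} I may assume $\partial_0 M \neq \varnothing$; using \cref{lem:thick-arc-discs} I can handle disjoint discs in the source, which is exactly the hypothesis needed to invoke the isotopy extension property (\cref{sec:formal-properties} \ref{isotopy-extension}). The strategy is then: cut $M$ along a properly embedded arc with endpoints on $\partial_0 M$ into a simpler triad $M'$, and likewise decompose the embedding space via the isotopy extension fibration sequence comparing $\Emb_{\partial_0}(M,N)$ to $\Emb_{\partial_0}(M',N')$ for various $N'$ obtained by cutting $N$. Each such cut reduces either the genus or the number of handles/boundary components, so the induction terminates at discs, collars, and Möbius strips, where convergence holds by \cref{sec:formal-properties} \ref{convergence} and direct computation (the diffeomorphism groups of the disc and Möbius band being understood). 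The covering-space lemma (\cref{lem:lift}) is the tool for handling the non-orientable case and for lifting to understand components of the mapping space. The main obstacle I anticipate is organizing this induction cleanly: one must choose the cutting arcs so that both the source and every possible target component get simplified simultaneously, keep track of all the boundary conditions and the parametrized family of targets $N'$ appearing as one varies the embedding $e\colon P \hookrightarrow N$ of the cut-off piece, and ensure that the base case surfaces (disc, annulus, Möbius band, pair of pants) are genuinely settled — including the subtle point that $T_\infty$ correctly detects when two non-diffeomorphic surfaces are being compared, so that empty embedding spaces map to empty $T_\infty$'s. This last point, that $T_\infty \Emb_{\partial_0}(M,N) = \varnothing$ whenever $\Emb_{\partial_0}(M,N) = \varnothing$ for surfaces, is likely where most of the real work lies.
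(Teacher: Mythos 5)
Your plan has the right toolkit and the right high-level structure (reduce the dimension of the target; use \cref{lem:thick-arc-converse} to match $\dim M$ with $\dim N$; then cut along submanifolds using the isotopy extension fibration sequences), but it misses the single hardest step, and misidentifies where the covering-space lemma earns its keep.

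The genuine difficulty is convergence for an arc (or equivalently, via \cref{lem:thick-arc-converse}, a strip $I\times[0,1]$) whose boundary condition lands on a \emph{single} boundary component of $N$. Note this is the generic situation: as soon as $N$ has one boundary component, every arc into $N$ is of this form, so one cannot sidestep it. There is no disc to glue on to turn this into convergence on collars, and nothing about convergence on open discs or about \cref{lem:thick-arc-discs} addresses it. This is exactly where \cref{lem:lift} enters, not for non-orientability as you suggest. The argument (following Gramain) is: attach a $1$-handle to $N$ so that the two endpoints of the boundary condition lie on different boundary components of the resulting surface $P$; convergence for arcs into $P$ then follows from the easy disc-gluing case; then pass to the cover $\widetilde{P}\to P$ corresponding to $\pi_1(N)\subset \pi_1(P)\cong\pi_1(N)*\bZ$, use \cref{lem:lift} to lift through the $T_\infty$-world, and exhibit $T_\infty\Emb_\partial(I,N)_\alpha$ as a retract of the contractible space $T_\infty\Emb_\partial(I,P)_\alpha$. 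Without this, your cutting induction has no base case: cutting $M$ along a strip requires convergence for $\Emb_{\partial_0}(\text{strip},N)$, and ``direct computation'' on discs, collars, and Möbius bands is not enough, because you need strips into \emph{arbitrary} targets $N$, not strips into discs.

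Two smaller corrections. First, the non-orientable pieces (Möbius band as source, $\bR P^2$-summands in $M$) are handled by the ordinary cutting machinery once the arc/circle/strip cases are in place; no covering spaces are needed there. Second, the closed disc $D^2$ with $\partial_0 D^2 = \partial D^2$ is not covered by the convergence property of \cref{sec:formal-properties} \ref{convergence}, which is only about open discs and collars; the paper handles it via a Cerf-lemma-style argument, applying isotopy extension to a strip $J\subset D^2$ (which again uses the arc case) and a retraction trick. And the point you flag as ``where most of the real work lies'' — that $T_\infty\Emb_{\partial_0}(M,N)=\varnothing$ when $\Emb_{\partial_0}(M,N)=\varnothing$ — is in fact handled fairly quickly using the factorisation through $\Map_{\partial_0}(M,N)$ together with the observation that boundary inclusions of compact connected surfaces other than $D^2$ are $\pi_1$-injective; the real work is the covering-space argument for arcs described above.
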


\begin{convention}\label{triad-convention}Throughout this section, we adopt following conventions on triads:
	\begin{enumerate}[leftmargin=*, label=(\roman*)]
		\item We write $I=[0,1]$ and call the $1$-dimensional triads $I$ and $I\times[0,1]$ with $\partial_0I=\{0,1\}$ and $\partial_0(I\times[0,1])=\{0,1\}\times [0,1]$ the \emph{arc} and the \emph{strip}. We will use the convention and notation from \cref{sect:triads}, so embeddings $I\times [0,1]$ into a triad $N$ will always be assumed to extend a boundary condition $e_{\partial_0}\colon \{0,1\}\times [0,1]\hookrightarrow \partial_0N$ which will either be specified or is clear from the context. We consider $I$ as a submanifold of $I\times [0,1]$ via the inclusion $\{1/2\}\times [0,1]\subset I\times [0,1]$, so a boundary condition $e_{\partial_0}$ as above in particular induces a boundary condition $e_{\partial}\colon \{0,1\}\hookrightarrow N$ for embedding of the form $I\hookrightarrow N$ by restriction.
		\item We consider the \emph{cylinder} $S^1 \times [0,1]$ as a manifold triad with $\partial_0(S^1 \times [0,1]) = \varnothing$. We consider the \emph{circle} $S^1$ as the submanifold of $S^1 \times [0,1]$ via the inclusion $S^1 \times \{1/2\} \hookrightarrow S^1 \times [0,1]$.
		\item We consider the \emph{Möbius strip} $\mr{Mo} = ([0,1] \times [0,1])/{\sim}$, with $\sim$ the equivalence relation generated by $(0,y) \sim (1,1-y)$, as a manifold triad with $\partial_0(\mr{Mo}) = \varnothing$. We consider $S^1$ as the submanifold of $\mr{Mo}$ via the inclusion $S^1 \times \{1/2\} \hookrightarrow \mr{Mo}$. 
		\item We write $\Sigma_{g,n}$ for an orientable compact surface of genus $g$ with $n$ boundary components, considered as a manifold triad with $\partial_0\Sigma_{g,n}=\partial \Sigma_{g,n}$.
	\end{enumerate}
\end{convention}

\subsection*{The steps}The proof of \cref{thm:main-general-triads} is divided into the following steps. 
\begin{enumerate}[label=(\arabic*)]
\item \label{enum:silly} $\dim(M)>\dim(N)$ or $\dim(M)=0$,
\item  \label{enum:surfaces}$\dim(M)\le \dim(N)=2$, with substeps:
\begin{enumerate}[label=(\arabic{enumi}.\arabic*)]
\item \label{enum:step-interval}$M$ an arc or a strip,
\item \label{enum:step-circle}$M$ a circle, a cylinder, or a Möbius band,
\item \label{enum:step-line-over-1dim}$M$ a line bundle over a  $1$-dimensional triad $M'$ with $\partial_0M'=\partial M$,
\item \label{enum:step-1dim}$M$ a general $1$-dimensional triad,
\item \label{enum:step-disc}$M=D^2$ with $\partial_0M=\partial M$,
\item \label{enum:step-genus0}$M$ an orientable genus $0$ surface with $\partial_0M=\partial M$,
\item\label{enum:step-connected-full-bdy} $M$ a connected $2$-dimensional triad with $\partial_0M=\partial M$,
\item \label{enum:step-connected}$M$ a connected $2$-dimensional triad with $\partial_0M\neq \partial M$,
\item \label{enum:step-general}$M$ a general $2$-dimensional triad,
\end{enumerate}
\item\label{enum:step-both-1dim}$\dim(M)=\dim(N)=1$.
\end{enumerate}

\noindent Too avoid being repetitive, we say that \emph{convergence holds for a pair of triads $(M,N)$} if the map
	\[\Emb_{\partial_0}(M,N)\lra T_\infty\Emb_{\partial_0}(M,N)\]
is a weak equivalence for all boundary conditions $e_{\partial_0}\colon \partial_0M\hookrightarrow \partial_0N$.

\subsection*{Step \ref{enum:silly}}\textit{Convergence holds for $(M,N)$ if $\dim(M)>\dim(N)$ or $\dim(M)=0$.}

\medskip
\noindent Convergence for $\dim(M)=0$ holds as a result of \cref{sec:formal-properties} \ref{convergence}. For $M\neq\varnothing$ and $\dim(M)>\dim(N)$, we consider the composition
$\Emb_{\partial_0}(M,N)\ra T_\infty\Emb_{\partial_0}(M,N)\ra\Bun_{\partial_0}(TM,TN)$ from \cref{sec:formal-properties} \ref{mapping-space}. If $\dim(M)>\dim(N)$ then the final space in this composition is empty, so the same holds for the first and the second space. This implies convergence.

\subsection*{Step \ref{enum:step-interval}} \textit{Convergence holds for $(M,N)$ if $M$ is an arc or a strip, and $\dim(N)=2$}

\medskip

\noindent We divide this step into two substeps: the case where the boundary condition $e_{\partial_0}\colon\partial_0M\hookrightarrow \partial_0N$ hits two distinct boundary components of $N$, and the case where the boundary condition hits a single boundary component. The arguments are inspired by Gramain's work \cite{Gramain} and Hatcher's exposition thereof in \cite{HatcherMW}. 

\subsubsection*{Substep: The boundary condition hits two distinct boundary components of $N$}By \cref{lem:thick-arc-converse} and isotopy invariance (see \cref{sec:formal-properties} \ref{naturality}), it suffices to consider the case $M=I\times[0,1]$ of a strip. To do so, we glue a disc $D$ to the boundary component of $N$ hit by $\{1\}$, and consider $L = (I \times [0,1] \cup D)$. Smoothing corners and an application of isotopy extension justified by the convergence on discs (see \cref{sec:formal-properties} \ref{convergence} and \ref{isotopy-extension}) yields a map of fibre sequence
	\[\begin{tikzcd} \Emb_{\partial_0}(I \times [0,1],N) \dar  \rar & \Emb_{I \times \{0\}}(L,N \cup D) \rar  \dar & \Emb(D,N \cup D) \dar{\simeq} \\
		T_\infty \Emb_{\partial_0}(I \times [0,1],N) \rar & T_\infty \Emb_{I \times \{0\}}(L,N \cup D) \rar & T_\infty\Emb(D,\,N \cup D)\end{tikzcd}\]
	with fibres taken over the standard inclusion. Since $L$ is isotopy equivalent to $I \times [0,1)$ relative to $I\times \{0\}$, the middle vertical map is a weak equivalence by isotopy invariance and the convergence on collars (see \cref{sec:formal-properties} \ref{naturality} and \ref{convergence}), so the left vertical map is a weak equivalence as well.

\subsubsection*{Substep: The boundary condition hits a single boundary components of $N$}
The case of arcs and strips connecting the same boundary component is harder and its proof is the heart of the overall argument. It relies on \cref{lem:lift} on lifting embeddings, which we spell out again in the special case we shall use. 

This involves a covering map $\widetilde{N}\ra N$, a boundary condition $e_\partial\colon \{0,1\} \hookrightarrow \partial N$, a path $\alpha\in\Map_\partial(I,N)$, and a lift $\tilde{\alpha} \colon I \to \smash{\widetilde{N}}$ of $\alpha$ whose endpoints induce a boundary condition $e_\partial\colon \{0,1\} \hookrightarrow \partial \widetilde{N}$. Recall that $\Emb_\partial(I,N)_\alpha\subset \Emb_\partial(I,N)$ and $T_\infty \Emb_\partial(I,N)_\alpha\subset T_\infty \Emb_\partial(I,N)$
denote the collections of path components that map to $[\alpha]\in \pi_0 \Map_\partial(I,N)$ via the maps in \eqref{equ:factorisation-to-maps}. \cref{lem:lift} for the triad $M=I$ with $\partial_0I=\{0,1\}$ then gives:

\begin{lemma}\label{lem:lift-arcs} In this situation, there exists a dashed map making the diagram
	\[\begin{tikzcd} \Emb_\partial(I,N)_\alpha \rar \dar & \Emb_\partial(I,\widetilde{N})_{\tilde{\alpha}} \dar \\
		T_\infty \Emb_\partial(I,N)_\alpha \arrow[r,dashed] & T_\infty \Emb_\partial(I,\widetilde{N})_{\tilde{\alpha}}\end{tikzcd}\]
	commute up to homotopy. Here the top map is given by sending an arc $\gamma\in\Emb_\partial(I,N)_\alpha$ to the unique lift $\widetilde{\gamma}\in\Emb_\partial(I,\widetilde{N})_{\tilde{\alpha}}$ starting at $\tilde{\alpha}(0)\in\widetilde{N}$.
\end{lemma}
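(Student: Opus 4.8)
The plan is to obtain \cref{lem:lift-arcs} as a direct instance of \cref{lem:lift}, applied to the triad $M=I$ with $\partial_0 I=\{0,1\}$, the covering map $\pi\colon\widetilde N\to N$, the boundary condition $e_\partial\colon\{0,1\}\hookrightarrow\partial N$ together with the lift $\tilde e_\partial\colon\{0,1\}\hookrightarrow\partial\widetilde N$ recording the endpoints of $\tilde\alpha$, and the homotopy classes $[\alpha]$ and $[\tilde\alpha]$ furnished in the setup. The only hypothesis of \cref{lem:lift} not already supplied by this data is that $\partial_0 M\to M$ be $0$-connected; in our case this is just the statement that $\{0,1\}\hookrightarrow[0,1]$ is surjective on path components, which is clear, and it is exactly what makes $[\tilde\alpha]$ (and the lifts appearing below) unique. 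So \cref{lem:lift} applies and yields the dashed map together with the commuting homotopy.

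It then remains to unwind, in this one-dimensional situation, the description of the top horizontal map. In \cref{lem:lift} it sends an embedding $\gamma$ to the unique lift $\widetilde\gamma$ extending $\tilde e_\partial$; for a path this lift is determined by its value at the point $0\in I$, where $\tilde e_\partial$ prescribes $\tilde\alpha(0)$. Conversely, if $\widetilde\gamma$ is the lift of $\gamma\in\Emb_\partial(I,N)_\alpha$ with $\widetilde\gamma(0)=\tilde\alpha(0)$, then since $\gamma$ lies in the same path component of $\Map_\partial(I,N)$ as $\alpha$ it is homotopic to $\alpha$ rel $\{0,1\}$, and lifting this homotopy via the covering homotopy property forces $\widetilde\gamma(1)=\tilde\alpha(1)$; hence $\widetilde\gamma$ does extend $\tilde e_\partial$, and it maps to $[\tilde\alpha]$ in $\pi_0\Map_\partial(I,\widetilde N)$, so it lies in $\Emb_\partial(I,\widetilde N)_{\tilde\alpha}$. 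This identifies the top map with the one in the statement, namely $\gamma\mapsto$ (its unique lift starting at $\tilde\alpha(0)$).

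There is no real obstacle here: the mathematical content is entirely absorbed into \cref{lem:lift}, and the only points needing a sentence of comment are the verification of the $0$-connectivity hypothesis for $\{0,1\}\hookrightarrow[0,1]$ and the translation of ``unique lift extending $\tilde e_\partial$'' into ``unique lift starting at $\tilde\alpha(0)$'', both of which are immediate for arcs.
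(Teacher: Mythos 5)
Your proposal is exactly the paper's argument: the paper states \cref{lem:lift-arcs} as nothing more than the specialisation of \cref{lem:lift} to the triad $M=I$ with $\partial_0 I=\{0,1\}$, and your two small verifications (that $\{0,1\}\hookrightarrow I$ is $0$-connected, and that ``unique lift extending $\tilde e_\partial$'' coincides with ``unique lift starting at $\tilde\alpha(0)$'') are the right, if implicit, points to note.
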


Using this lemma, we now prove convergence for $(M,N)$ if $M$ is an arc or a strip, $\dim(N)=2$, and the boundary condition $e_{\partial_0}\colon \partial_0M\hookrightarrow \partial_0N$ hits a single boundary components of $N$.

\medskip

\noindent By \cref{lem:thick-arc} and isotopy invariance (see \cref{sec:formal-properties} \ref{naturality}) it suffices to prove the claim for the arc, and by \cref{cor:add-components}, we may assume that the target $N$ is connected. We attach a 1-handle $I \times [0,1]$ to $N$ to the boundary component hit by $\{0,1\}$, such that $I \times \{0\}$ and $I \times \{1\}$ are separated on that boundary component by $\{0,1\}$ and are embedded with opposite orientation, resulting in a new surface $P$ with an additional boundary component; see Figure \ref{fig:gamma}. The composition $\{0,1\}\hookrightarrow N\subset P$ now hits two distinct boundary components, so the right vertical map in the homotopy-commutative diagram induced by the inclusion $N\subset P$ (see  \cref{sec:formal-properties} \ref{postcomposition})
	\begin{equation}\label{equ:extension-to-gamma}\begin{tikzcd} \Emb_\partial(I,N) \rar \dar & \Emb_\partial(I,P) \dar{\simeq} \\
			T_\infty \Emb_\partial(I,N) \rar & T_\infty \Emb_\partial(I,P) \end{tikzcd}\end{equation}
	is a weak equivalence by the previous substep. 
	
	\begin{figure}
		\begin{tikzpicture}[scale=0.8]
			\draw [dotted] (0,0) circle (2cm);
			\draw [domain=160:20] plot ({2*cos(\x)}, {2*sin(\x)});
			\draw [domain=-160:-20] plot ({2*cos(\x)}, {2*sin(\x)});
			\draw ({2*cos(20)},{2*sin(20)}) to[out=0,in=0,looseness=2] (0,3);
			\draw ({2*cos(160)},{2*sin(160)}) to[out=180,in=180,looseness=2] (0,3);
			\draw ({2*cos(-20)},{2*sin(-20)}) to[out=0,in=0,looseness=2] (0,4);
			\draw ({2*cos(-160)},{2*sin(-160)}) to[out=180,in=180,looseness=2] (0,4);
			\draw [ultra thick,Mahogany] (0,-2) -- (0,2);
			\node [right] at (0,0) {$\alpha$};
			\draw [very thick,Mahogany,dashed] (0,3) -- (0,4);
			\node [right] at (0,3.5) {$\beta$};
			
			\draw[yshift=-.3cm] (-1.5,-.5) to[out=0,in=-90] (-1.4,0.8) to[out=90,in=180] (-1,1.2) to[out=0,in=90] (-.6,0.8) to[out=-90,in=180] (-.5,-.5);	
			\draw[yshift=-.3cm] (-1.2,0.8) to[out=-90,in=-90] (-0.8,0.8);
			\draw[yshift=-.3cm] (-1.15,0.75) to[out=90,in=90] (-0.85,0.75);
			
			\draw[xshift=2cm,yshift=.1cm] (-1.5,-.5) to[out=0,in=-90] (-1.4,0.8) to[out=90,in=180] (-1,1.2) to[out=0,in=90] (-.6,0.8) to[out=-90,in=180] (-.5,-.5);	
			\draw[xshift=2cm,yshift=.1cm] (-1.2,0.8) to[out=-90,in=-90] (-0.8,0.8);
			\draw[xshift=2cm,yshift=.1cm] (-1.15,0.75) to[out=90,in=90] (-0.85,0.75);
			
			\node at (1,-1) {$N$};
			\node at (-2,3) {$P$};
		\end{tikzpicture}
		\caption{The surface $P$. The original surface $N$ is the region within the dotted circle.}
		\label{fig:gamma}
	\end{figure}
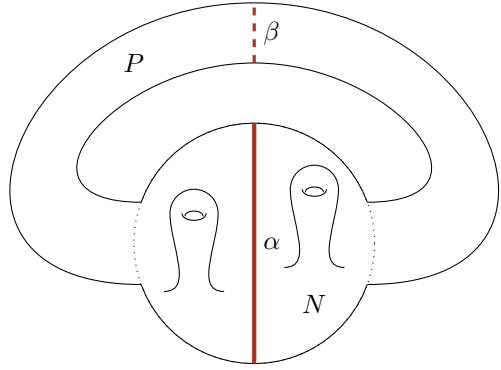
	
	We next investigate the set of path-components. To do so, we will use that the dashed map in 
		\[\begin{tikzcd}
		\pi_0\, \Emb_\partial(I,N)\arrow[dr,dashed]\arrow[rrd, bend left=8]\arrow[ddr, bend right=20]&&\\[-10pt]
		&\big(\pi_0\, \Map_\partial(I,N)\big)\times_{\pi_0\, \Map_\partial(I,P)}\big(\pi_0\, \Emb_\partial(I,P)\big) \rar \dar & \pi_0\, \Emb_\partial(I,P) \dar \\
		&\pi_0\, \Map_\partial(I,N) \rar & \pi_0\, \Map_\partial(I,P),\end{tikzcd}\]
	is surjective: if an embedding $I \hookrightarrow P$ is homotopic to a map $I \to N$, then it is isotopic to an embedding $I \hookrightarrow N$ within the homotopy class of $I\to N$. To see this, use the bigon criterion \cite[Sections 1.2.4, 1.2.7]{FarbMargalit} to isotope $I \hookrightarrow P$ so that its geometric intersection number with the cocore $\beta$ of the 1-handle is equal to the algebraic intersection number, which is $0$ since it is homotopic to a map $I \to N$. With this in mind, a diagram chase in the factorisation
	\[\begin{tikzcd} \pi_0\, \Emb_\partial(I,N) \arrow{r} \arrow[dd, bend right=90, swap, "\circled{1}"] \arrow{d} & \pi_0\, \Emb_\partial(I,P) \arrow[d,"\cong"] \\
		\pi_0\, T_\infty \Emb_\partial(I,N) \arrow{r} \arrow[d,"\circled{2}",swap] &\pi_0\, T_\infty \Emb_\partial(I,P)\arrow{d} \\	
		\pi_0\, \Map_\partial(I,N) \arrow{r} &  \pi_0\, \Map_\partial(I,P)\end{tikzcd}\]
	shows that the maps \circled{1} and \circled{2} have the same image.
	
	Let us now fix a class $[\alpha] \in \pi_0\, \Map_\partial(I,N)$ in this image. As the map \circled{1} is injective because two embedded arcs are isotopic relative to the endpoints if and only if they are homotopic relative to the endpoints (see \cite{Feustel}), there is a unique path component $\Emb_\partial(I,N)_\alpha$ of $\Emb_\partial(I,N)$ mapping to $[\alpha]$.  Denoting by $T_\infty \Emb_\partial(I,N)_\alpha\subset T_\infty \Emb_\partial(I,N)$ the union of all path components that map to $[\alpha]$, it suffices to show that the map
	$\Emb_\partial(I,N)_\alpha \ra T_\infty \Emb_\partial(I,N)_\alpha$
	is a weak equivalence for all choices of $[\alpha]$. Since $\Emb_\partial(I,N)_\alpha$ is contractible by \cite[Th\'eor\`eme 5]{Gramain}, the task is to prove that $T_\infty \Emb_\partial(I,N)_\alpha$ is (weakly) contractible as well. 
	
	To do so, we will construct a homotopy-commutative diagram
	\begin{equation}\label{eqn:retraction-diagram}\begin{tikzcd} \Emb_\partial(I,N)_\alpha \rar \dar & \Emb_\partial(I,P)_{{\alpha}} \rar{(e \circ -) \circ \text{lift}} \arrow[d,"\simeq"] &[20pt]  \Emb_\partial(I,N)_\alpha \dar \\
			T_\infty \Emb_\partial(I,N)_\alpha \rar & T_\infty \Emb_\partial(I,P)_{{\alpha}} \rar{(e \circ -) \circ \text{lift}} &  T_\infty \Emb_\partial(I,N)_\alpha\end{tikzcd}\end{equation}
	whose horizontal compositions are homotopic to the identity. This will finish the proof, since it exhibits $T_\infty \Emb_\partial(I,N)_\alpha$ as a retract of the contractible space $T_\infty \Emb_\partial(I,P)_{{\alpha}} \simeq \Emb_\partial(I,P)_{{\alpha}}$.
	
	The left square in \eqref{eqn:retraction-diagram} is obtained by restricting the path-components of the homotopy commutative square \eqref{equ:extension-to-gamma}. The right square arises as the composition of two squares 
	\[\begin{tikzcd}\Emb_\partial(I,P)_\alpha \rar{\text{lift}} \dar{\simeq} & \Emb_\partial(I,\widetilde{P})_{\tilde{\alpha}} \rar{e \circ -} \dar & \Emb_\partial(I,N)_{\alpha} \dar \\
		T_\infty \Emb_\partial(I,P)_\alpha \rar{\text{lift}} & T_\infty \Emb_\partial(I,\widetilde{P})_{\tilde{\alpha}} \rar{e \circ -} & T_\infty\Emb_\partial(I,N)_{\alpha}. \end{tikzcd}\]
	which we explain now. The surface $\widetilde{P}$ is an appropriate covering space of $P$: the construction of $P$ gives a decomposition $\pi_1(P) \cong \pi_1(N) \ast \bZ$ and $\widetilde{P}$ is the cover corresponding to the subgroup $\pi_1(N)$. Explicitly, the cover $\widetilde{P}$ can be constructed by cutting $P$ along $\beta$ to obtain a surface $R$ (see Figure \ref{fig:tildegamma}) and gluing two copies of the universal cover $\widetilde{R}$ of this surface to the two dashed intervals in the boundary resulting from $\beta$. Note that $R$ contains a preferred lift $\tilde{\alpha}$ of $\alpha$ and hence so does $\widetilde{P}$. We denote the endpoints of $\alpha$ and $\widetilde{\alpha}$ in the various surfaces generically by $\{0,1\}$. The cover $\widetilde{P}$ has the property that the map $N \to P$ lifts uniquely to $\widetilde{P}$ so that $\{0,1\}$ is fixed. Moreover, using that the interior of $\widetilde{R}$ is diffeomorphic to $\bR^2$, there is an embedding $e \colon \widetilde{P} \hookrightarrow N$ fixing $\{0,1\}$ such that the composition $N \to \widetilde{P} \to N$ is isotopic to the identity relative to $\{0,1\}$. Viewing $\widetilde{P}$ as being glued together by three parts---$N$, the two half-strips resulting from the cut $1$-handle, and the two copies of $\tilde{R}$ attached to these two half-strips---this embedding $e \colon \widetilde{P} \hookrightarrow N$ is given by the identity on $N\subset \widetilde{P}$ apart from a neighbourhood of the two arcs in the boundary to which the half-strips are attached, and by pushing the half-strips and the copies of $\tilde{R}$ attached to them into this neighbourhood.
	
	\begin{figure}
		\centering
		\begin{tikzpicture}
			\draw [dotted] (0,0) circle (2cm);
			\draw [domain=160:20] plot ({2*cos(\x)}, {2*sin(\x)});
			\draw [domain=-160:-20] plot ({2*cos(\x)}, {2*sin(\x)});
			\draw ({2*cos(20)},{2*sin(20)}) -- ({2*cos(20)+1},{2*sin(20)});
			\draw ({2*cos(160)},{2*sin(160)}) -- ({2*cos(160)-1},{2*sin(160)});
			\draw ({2*cos(-20)},{2*sin(-20)}) -- ({2*cos(-20)+1},{2*sin(-20)});
			\draw ({2*cos(-160)},{2*sin(-160)}) -- ({2*cos(-160)-1},{2*sin(-160)});
			
			\draw [ultra thick,Mahogany] (0,-2) -- (0,2);
			\node [right] at (0,0) {$\tilde{\alpha}$};
			\draw [very thick,Mahogany,dashed] ({2*cos(20)+1},{2*sin(20)}) -- ({2*cos(-20)+1},{2*sin(-20)});
			\draw [very thick,Mahogany,dashed] ({2*cos(160)-1},{2*sin(160)}) -- ({2*cos(-160)-1},{2*sin(-160)});
			
			\draw[yshift=-.3cm] (-1.5,-.5) to[out=0,in=-90] (-1.4,0.8) to[out=90,in=180] (-1,1.2) to[out=0,in=90] (-.6,0.8) to[out=-90,in=180] (-.5,-.5);	
			\draw[yshift=-.3cm] (-1.2,0.8) to[out=-90,in=-90] (-0.8,0.8);
			\draw[yshift=-.3cm] (-1.15,0.75) to[out=90,in=90] (-0.85,0.75);
			
			\draw[xshift=2cm,yshift=.1cm] (-1.5,-.5) to[out=0,in=-90] (-1.4,0.8) to[out=90,in=180] (-1,1.2) to[out=0,in=90] (-.6,0.8) to[out=-90,in=180] (-.5,-.5);	
			\draw[xshift=2cm,yshift=.1cm] (-1.2,0.8) to[out=-90,in=-90] (-0.8,0.8);
			\draw[xshift=2cm,yshift=.1cm] (-1.15,0.75) to[out=90,in=90] (-0.85,0.75);

			\node at (1,-1) {$R$};
		\end{tikzpicture}
		\caption{The surface $R$.}
		\label{fig:tildegamma}
	\end{figure}
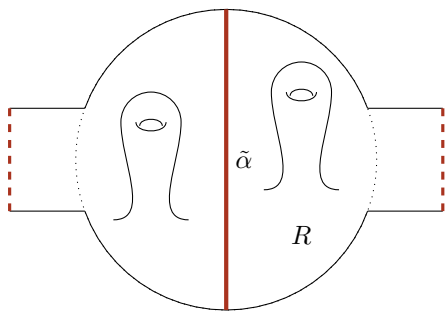
	
	The right square is induced by post-composition with $e$, so homotopy commutes in view of \cref{sec:formal-properties} \ref{postcomposition}. The homotopy commutative left square is obtained by invoking the lifting lemma \cref{lem:lift-arcs} for the covering map $\widetilde{P} \to P$. The top composition in \eqref{eqn:retraction-diagram} is homotopic to the identity by construction, but it remains to justify this for the bottom composition. Justifying this requires the details of the proof of \cref{lem:lift}, in particular the presheaf $\Emb^\pi_\partial(-,\widetilde{P})$ defined there. Viewing $N$ as a submanifold of $\widetilde{P}$ as explained above, the projection $\pi\colon \widetilde{P}\ra N$ is isotopic to the identity when restricted to $N$, so we have a dashed inclusion map of presheaves on $\cat{Disc}_{\partial I}$ that makes the triangle in the following diagram commute up homotopy
	\[\begin{tikzcd} \Emb_{\partial}(-,N) \rar[hook] \arrow[dashed,"\subset",swap]{rd}& \Emb_{\partial}(-,P) &&\\
		& \Emb^\pi_{\partial}(-,\widetilde{P}) \uar[swap]{\pi \circ -} \rar[hook] &\Emb_{\partial}(-,\widetilde{P})\rar{e \circ (-)}&\Emb_{\partial}(-,N)
	\end{tikzcd}.\]
	Moreover, since $N \subset \widetilde{P} \to N$ is isotopic to the identity, the composition $\Emb_{\partial}(-,N)\ra\Emb_{\partial}(-,N)$ along the bottom is homotopic to the identity. Applying $T_\infty$, evaluating at $I$, and restricting to path-components, we obtain a homotopy commutative diagram
		\[\begin{tikzcd} T_\infty\Emb_{\partial}(I,N)_\alpha \rar\arrow{rd}& T_\infty\Emb_{\partial}(I,P)_{\tilde{\alpha}} &&\\
		& T_\infty\Emb^\pi_{\partial}(I,\widetilde{P})_{\tilde{\alpha}} \uar[swap]{\simeq} \rar &T_\infty\Emb_{\partial}(I,\widetilde{P})_{\tilde{\alpha}}\rar{e \circ (-)}&T_\infty\Emb_{\partial}(I,N)_\alpha
	\end{tikzcd}\]
	whose composition along the bottom $T_\infty\Emb_{\partial}(-,N)_\alpha\ra T_\infty\Emb_{\partial}(-,N)_\alpha$ is homotopic to the identity. The composition along the top involving a wrong-way weak equivalence agrees by construction with the bottom composition of \eqref{eqn:retraction-diagram}, so it is homotopic to the identity as claimed (recall \cref{rem:weak-map}).

\subsection*{Step \ref{enum:step-circle}}\textit{Convergence for $(M,N)$ if $M$ is a circle, cylinder, or Möbius strip, and $\dim(N)=2$.}

\medskip

\noindent By \cref{lem:thick-arc-converse}, it suffices to prove the claim for the cylinder and the Möbius strip. We will do so for the Möbius strip $M=\mr{Mo}$; the argument for the cylinder is analogous. We pick a disc $D^2 \subset \mr{int}(\mr{Mo})$. By \cref{lem:remove-discs} it suffices to prove that 
\[\Emb_{\partial_0}(\mr{Mo} \backslash \mr{int}(D^2),N \backslash \mr{int}(e(D^2))) \lra T_\infty \Emb_{\partial_0}(\mr{Mo} \backslash \mr{int}(D^2),N \backslash \mr{int}(e(D^2)))\]
is a weak equivalence for all embeddings $e \colon D^2 \hookrightarrow \mr{int}(\Sigma)$. To this end, we pick a subtriad $I \times [0,1] \subset \mr{Mo}\backslash \mr{int}(D^2)$ as in \cref{fig:moebius} and attempt to show that the vertical restriction maps in the diagram
\[\begin{tikzcd} \Emb_{\partial_0}\big(\mr{Mo} \backslash \mr{int}(D^2),N \backslash \mr{int}(e(D^2))\big) \rar \dar & T_\infty \Emb_{\partial_0}\big((\mr{Mo} \backslash \mr{int}(D^2),N \backslash \mr{int}(e(D^2))\big) \dar \\
	\Emb_{\partial_0}\big(I \times [0,1],N \backslash \mr{int}(e(D^2))\big) \rar & T_\infty \Emb_{\partial_0}\big(I \times [0,1],N \backslash \mr{int}(e(D^2))\big)\end{tikzcd}\]
are weak equivalences. Isotopy extension exhibits the homotopy fibre of the left vertical map up to smoothing corner and isotopy equivalence as $\Emb_\partial(I \times [0,1) \sqcup I \times [0,1),N \backslash \mr{int}(e(D^2)))$ which is contractible by the contractibility of spaces of collars. To see that the right vertical map is an equivalence, one combines this observation with descent with respect to a Weiss $\infty$-cover of open discs and collars on $\partial D^2$ similarly to the proof of \cref{lem:forget-collar}. As the bottom horizontal map is a weak equivalence by Step \ref{enum:step-interval}, the top horizontal map is a weak equivalence as well.

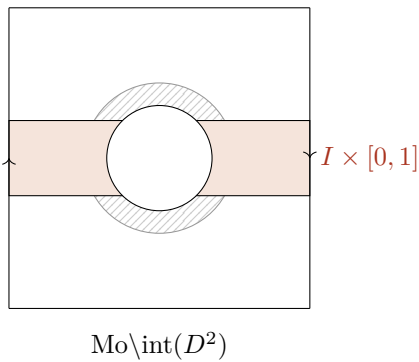
\begin{figure}
	\begin{tikzpicture}
		[
		decoration={
			markings,
			mark=at position 0.5 with {\arrow{>}}}
		] 
		\draw[black!40!white,pattern=north east lines,pattern color=black!20!white] (0,0) circle (1cm);
		\draw[fill=Mahogany!10!white] (-2,-.5) -- (2,-.5) -- (2,.5) -- (-2,.5) -- cycle;
		\draw (-2,2) -- (2,2);
		\draw[postaction=decorate] (2,2) -- (2,-2);
		\draw (2,-2) -- (-2,-2);
		\draw[postaction=decorate] (-2,-2) -- (-2,2);
		\draw [fill=white](0,0) circle (.7cm);
		\node at (0,-2.5) {$\mr{Mo}\backslash \mr{int}(D^2)$};
		\node [Mahogany] at (2.8,0) {$I \times [0,1]$};
	\end{tikzpicture}
	\caption{The complement of an open disc in Möbius strip. The orange copy of $I \times [0,1]$ differs up to isotopy equivalence from $\mr{Mo}\backslash \mr{int}(D^2)$ only in hatched region which is diffeomorphic to $I \times [0,1) \sqcup I \times [0,1)$.}
	\label{fig:moebius}
\end{figure}

\subsection*{Step \ref{enum:step-line-over-1dim}}\textit{Convergence for $(M,N)$ if
	\[M = \left(T_1 \times I \times [0,1]\right) \sqcup \left(T_2 \times S^1 \times [0,1]\right) \sqcup \left(T_3 \times \mr{Mo}\right)\]
for (possibly empty) finite sets $T_i$ and $\dim(N)=2$.}

\medskip

\noindent The proof is by induction over $t = |T_1|+|T_2|+|T_3|$. The initial case $t=1$ is provided by Steps \ref{enum:step-interval} and \ref{enum:step-circle}. For the induction step, we pick a component of $M$, say of the form $I \times [0,1]$; the other cases are analogous. We consider $M'\coloneqq M\backslash  I \times [0,1]$. An application of isotopy extension (see \cref{sec:formal-properties} \ref{isotopy-extension}) to $P = I \times [1/4,3/4] \subset I \times [0,1]$, justified by \cref{lem:thick-arc-discs} and Step \ref{enum:step-interval}, gives fibre sequences
\[\begin{tikzcd} \Emb_{\partial_0}(M',N \backslash \mr{int}(e(P))) \dar \rar & \Emb_{\partial_0}(M,N) \rar \dar & \Emb_{\partial_0}(P,N) \dar{\simeq} \\
T_\infty \Emb_{\partial_0}(M',N \backslash \mr{int}(e(P))) \rar & T_\infty \Emb_{\partial_0}(M,N) \rar & T_\infty \Emb_{\partial_0}(P,N).\end{tikzcd}\]
Here we used \cref{lem:forget-collar} and isotopy invariance to replace $M' \sqcup (I \times [0,1] \backslash \mr{int}(P))$ in the domain with $M'$. The left vertical map is a weak equivalence by the induction hypothesis, so the middle vertical map is a weak equivalence one too.

\subsection*{Step \ref{enum:step-1dim}}\textit{Convergence for $(M,N)$ if $\dim(M)=1$ and $\dim(N)=2$.}

\medskip

\noindent Step \ref{enum:step-line-over-1dim} together with \cref{lem:thick-arc-converse} gives result for those triads of the form $M' = (T_1 \times I) \sqcup (T_2 \times S^1)$ for finite sets $T_i$ and $\partial_0 M' = T_1 \times \{0,1\}$. The general case, which has
\[M = (T_1 \times I) \sqcup (T_2 \times S^1)  \sqcup (T_3 \times [0,1]) \sqcup (T_4 \times [0,1])\]
for finite sets $T_i$ and $\partial_0 M = (T_1 \times \{0,1\}) \sqcup (T_3 \times \{0\})$ follows from this by Lemmas \ref{lem:forget-collar} and \ref{lem:thick-arc-discs} together with isotopy invariance (see \cref{sec:formal-properties} \ref{naturality}).

\subsection*{Step \ref{enum:step-disc}}
\textit{Convergence for $(M,N)$  if $M=D^2$ with $\partial_0M=\partial M$, and $\dim(N)=2$.}

\medskip

\noindent By \cref{cor:add-components} we may assume that $N$ is connected. 

We first prove the case where the target $N$ is \emph{not} diffeomorphic to $D^2$. In this case $\Emb_\partial(D^2,N) = \varnothing$, so we need to show $T_\infty\Emb_\partial(D^2,N)=\varnothing$. If this were to fail, then the target of the map $T_\infty \Emb_\partial(D^2,N) \ra \Map_\partial(D^2,N)$ from \cref{sec:formal-properties} \ref{mapping-space} must be nonempty, so $N$ would be a connected surface with a boundary component whose inclusion is null-homotopic. We claim this is impossible unless $N \cong D^2$. Firstly, if $N = N_1 \natural \cdots \natural N_1$, then $\pi_1(N)$ splits as a free product $\pi_1(N_1) \ast \cdots \ast \pi_1(N_n)$ and we may choose this decomposition so that the homotopy class of the boundary inclusion represents the free product of the homotopy classes of boundary inclusions of those components at which we perform the boundary connected sums. By the classification of connected compact surfaces, it then suffices to observe that all boundary inclusions are non-trivial in the fundamental group of the surfaces $\Sigma_{0,2}$, $\Sigma_{1,1}$, and $\mr{Mo}$. For $\Sigma_{0,2}$, each inclusion represents a generator of $\pi_1(\Sigma_{0,2}) \cong \bZ$, for $\Sigma_{1,1}$ the boundary inclusion represents $xyx^{-1}y^{-1} \in \pi_1(\Sigma_{1,1}) \cong \langle x,y\rangle$, and for the Möbius strip it represents twice a generator in $\pi_1(\mr{Mo}) \cong \bZ$.

It remains to show that $\Emb_\partial(D^2,D^2) \ra T_\infty \Emb_\partial(D^2,D^2)$
a weak equivalence for which we follow the proof of what is sometimes called the \emph{Cerf Lemma} \cite[Proposition 5]{CerfAppl}. We consider the triad $H = D^2 \cap ([-1/2,\infty) \times \bR)$ with $\partial_0 H = H \cap \partial D^2$ and $\partial_1 H = H \cap (\{-1/2\} \times \bR)$ containing the strip $J = H \cap ([-1/4,1/4] \times \bR)$ with  $\partial_0J=J\cap\partial D^2$, see Figure \ref{fig:disc:triads}. Writing $H_0 = H \setminus ((-1/4,1/4) \times \bR)\cap H$ and $D^2_0 = D^2 \setminus ((-1/4,1/4) \times \bR)\cap D^2$, an application of isotopy extension (see \cref{sec:formal-properties} \ref{isotopy-extension}) justified by Step \ref{enum:step-interval} in the case $M = J \cong I \times [0,1]$ and \cref{lem:thick-arc-discs} gives a map of fibre sequences with connected weakly equivalent bases and homotopy fibres over the standard inclusion $J \hookrightarrow D^2$
\[\begin{tikzcd} \Emb_{\partial_0}(H_0,D^2_0) \rar \dar & \Emb_{\partial_0}(H,D^2) \dar \rar & \Emb_{\partial_0}(J,D^2) \dar{\simeq}  \\ 
	T_\infty \Emb_{\partial_0}(H_0,D^2_0) \rar & T_\infty \Emb_{\partial_0}(H,D^2) \rar & T_\infty \Emb_{\partial_0}(J,D^2) \end{tikzcd}.\]
As $H$ is a closed collar on $\partial_0H$, the middle vertical map is a weak equivalence by isotopy invariance and the convergence on collars (see \cref{sec:formal-properties} \ref{naturality} and \ref{convergence}). By \cref{lem:forget-collar} we may discard the collar $H_0 \cap ((-\infty,0] \times \bR)$ from the source of the left vertical map, and obtain that for $H'_0 = H \cap ([1/4,\infty) \times \bR)$ the map $\Emb_{\partial_0}(H'_0,D^2_0) \ra T_\infty \Emb_{\partial_0}(H'_0,D^2_0)$ is a weak equivalence. Invoking \cref{cor:add-components} to neglect $D_0^2\backslash H_0$ from the target and identifying $H'_0$ with a disc upon smoothing corners, we conclude that $\Emb_\partial(D^2,D^2) \ra T_\infty \Emb_\partial(D^2,D^2)$ is a weak equivalence.

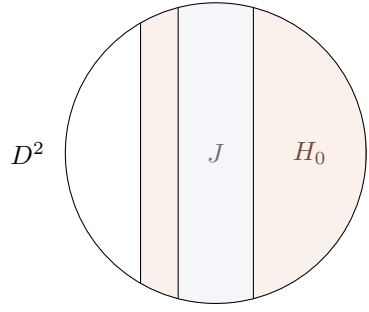
\begin{figure}
	\begin{tikzpicture}[scale=1]
		\node at (-2.5,0) {$D^2$};
		\begin{scope}
			\clip (0,0) circle (2cm);
			\draw[fill=Mahogany!5!white] (-1,3) -- (-1,-3) -- (2,-3) -- (2,3) -- cycle;
			\draw[fill=Periwinkle!5!white] (-.5,3) -- (-.5,-3) -- (.5,-3) -- (.5,3) -- cycle;
		\end{scope}
		\draw (0,0) circle (2cm);
		\node[Periwinkle!50!black] at (0,0) {$J$};
		\node[Mahogany!50!black] at (1.25,0) {$H_0$};
	\end{tikzpicture}
	\caption{The triads $J,H_0 \subset D^2$. Here $H$ the union of $J$ and $H_0$, and $H'_0 \subset H_0$ is the component to the right of $J$.}
	\label{fig:disc:triads}
\end{figure}

\subsection*{Step \ref{enum:step-genus0}}
\textit{Convergence for $(M,N)$ if $M$ is an orientable surface of genus $0$ with $n \geq 1$ boundary components and $\partial_0 M = \partial M$, and $\dim(N)=2$.}

\medskip

\noindent Note that by gluing $(n-1)$ discs to $M$ we obtain a disc $D^2$. We also glue $n-1$ discs to the corresponding boundary components of $N$ to obtain a triad $N'$ with a canonical embedding $e \colon \ul{n{-}1} \times D^2 \hookrightarrow N'$. Then isotopy extension and the convergence on discs (see \cref{sec:formal-properties} \ref{isotopy-extension} and \ref{convergence}) yields fibre sequences
\[\begin{tikzcd} \Emb_\partial(M,N) \rar \dar & \Emb_\partial(D^2,N') \rar \dar & \Emb(\ul{n{-}1} \times D^2,N') \dar{\simeq} \\
T_\infty \Emb_\partial(M,N) \rar & T_\infty \Emb(D^2,N') \rar & T_\infty \Emb(\ul{n{-}1} \times D^2,N').\end{tikzcd}\]
The middle vertical map a weak equivalence by Step \ref{enum:step-disc}, so the left map is one as well.

\subsection*{Step \ref{enum:step-connected-full-bdy}}\textit{Convergence for $(M,N)$ if $M$ is connected, $\partial_0M=\partial M$, and $\dim(M)=\dim(N)=2$.}

\begin{figure}
\centering
\begin{minipage}[b]{0.52\textwidth}
  \centering
  \begin{tikzpicture}[scale=0.8]
		\draw (0,0) circle (3cm);
		
		\begin{scope}[yshift=-.4cm,xshift=1.2cm,scale=1.3]
		\draw [fill=Mahogany!20!white] (-1.1,0.8) -- (-1.1,1.19) to[out=10,in=-190] (-0.9,1.19) -- (-0.9,0.8) -- cycle;
		\draw (-1.5,-.5) to[out=0,in=-90] (-1.4,0.8) to[out=90,in=180] (-1,1.2) to[out=0,in=90] (-.6,0.8) to[out=-90,in=180] (-.5,-.5);	
		\draw (-1.2,0.8) to[out=-90,in=-90] (-0.8,0.8);
		\draw [fill=white] (-1.15,0.75) to[out=90,in=90] (-0.85,0.75);
		\end{scope}
	
		\begin{scope}[yshift=-.4cm,xshift=-.3cm,scale=1.3]
		\draw [fill=Mahogany!20!white] (-1.1,0.8) -- (-1.1,1.19) to[out=10,in=-190] (-0.9,1.19) -- (-0.9,0.8) -- cycle;
		\draw (-1.5,-.5) to[out=0,in=-90] (-1.4,0.8) to[out=90,in=180] (-1,1.2) to[out=0,in=90] (-.6,0.8) to[out=-90,in=180] (-.5,-.5);	
		\draw (-1.2,0.8) to[out=-90,in=-90] (-0.8,0.8);
		\draw [fill=white] (-1.15,0.75) to[out=90,in=90] (-0.85,0.75);
		\end{scope}
		
		\draw (1.3,1.5) circle (.45cm);
		\draw (1.3,0) circle (.45cm);
		\draw (1.3,-1.5) circle (.45cm);
	\end{tikzpicture}
	\caption{$M=\Sigma_{0,4}\,\natural\,(\Sigma_{1,1})^{\natural \ul{2}}$ with subtriad $P = \ul{2} \times S^1 \times [0,1]\subset M$ whose complement has genus $0$ and $8$ boundary components.}
	\label{fig:arcs}
\end{minipage}%
\begin{minipage}[b]{0.52\textwidth}
  \centering
\begin{tikzpicture}[scale=.8]
		\draw (0,0) circle (3cm);
		
		\begin{scope}[yshift=-.4cm,xshift=-.2cm,scale=1.3]
			\draw (-1.5,-.5) to[out=0,in=-90] (-1.4,0.8) to[out=90,in=180] (-1,1.2) to[out=0,in=90] (-.6,0.8) to[out=-90,in=180] (-.5,-.5);	
			\draw (-1.2,0.8) to[out=-90,in=-90] (-0.8,0.8);
			\draw (-1.15,0.75) to[out=90,in=90] (-0.85,0.75);
		\end{scope}

		\draw [fill=Mahogany!20!white] (1.3,1) circle (.9cm);
		\draw [fill=white] (1.3,1) circle (.75cm);
		\draw [very thick,Mahogany,dashed] (1.3,1) circle (.6cm);
		
		\begin{scope}[yshift=-.2cm,xshift=.1cm]
			\draw [fill=Mahogany!20!white] (.9,-1.4) circle (.75cm);
			\draw [fill=white] (.9,-1.4) circle (.6cm);
			\draw [fill=Mahogany!20!white] (1.7,-.6) circle (.75cm);
			\draw [fill=white] (1.7,-.6) circle (.6cm);
			\fill [white] (1.3,-1) circle (.6cm);
			\draw [domain=90:0,very thick,Mahogany,dashed] plot ({.6*cos(\x)+1.3}, {.6*sin(\x)-1});
			\draw [domain=270:180,very thick,Mahogany,dashed] plot ({.6*cos(\x)+1.3}, {.6*sin(\x)-1});
			\draw [domain=180:90] plot ({.6*cos(\x)+1.3}, {.6*sin(\x)-1});
			\draw [domain=360:270] plot ({.6*cos(\x)+1.3}, {.6*sin(\x)-1});
		\end{scope}
	\end{tikzpicture}
	\caption{$M = \Sigma_{0,3}\,\natural\,\Sigma_{1,1}$ with $\partial_1 M$ dashed, with subtriad $P = \ul{2} \times I \times [0,1] \sqcup S^1 \times [0,1] \subset M$. The component of $M \backslash \mr{int}(P)$ containing $\Sigma_{1,1}$ is $M'$.}
	\label{fig:boundary}
\end{minipage}
\end{figure}

\medskip
\noindent As a result of \cref{lem:remove-discs}, we may assume that $\partial M \neq \varnothing$, so $M$ is a boundary connected sum 
\[\Sigma_{0,n} \,\natural\, (\Sigma_{1,1})^{\natural T_1} \,\natural\, (\bR P^2)^{\natural T_2}\]
for $n\ge1$ and possibly empty finite sets $T_1$ and $T_2$. Thus we may find an embedding
\[P = (T_1 \times S^1 \times [0,1]) \sqcup (T_2 \times \mr{Mo}) \lra M\]
such that $M \backslash \mr{int}(P) \cong \Sigma_{0,n'}$ with $n' = n+2|T_1|+|T_2|$; see \cref{fig:arcs} for an example. For any embedding $e \colon M \hookrightarrow N$ extending the boundary condition, an application of isotopy extension (see \cref{sec:formal-properties} \ref{isotopy-extension}), justified by Step \ref{enum:step-line-over-1dim} and \cref{lem:thick-arc-discs}, gives a map of fibre sequences
\[\begin{tikzcd} \Emb_\partial(\Sigma_{0,n'},N{\backslash} e(\mr{int}(P))) \rar \dar & \Emb_\partial(M,N) \rar \dar & \Emb(P,N) \dar{\simeq} \\
	T_\infty \Emb_\partial(\Sigma_{0,n'},N{\backslash} e(\mr{int}(P))) \rar & T_\infty \Emb(M,N) \rar & T_\infty \Emb(P,N)\end{tikzcd}\]
whose left vertical map a weak equivalence by Step \ref{enum:step-connected-full-bdy}. Varying the embedding $e \colon M \hookrightarrow N$, we conclude that the middle vertical map is also a weak equivalence.

\subsection*{Step \ref{enum:step-connected}}\textit{Convergence for $(M,N)$ if $M$ is connected, $\partial_0M\neq \partial M$, and $\dim(M)=\dim(N)=2$.}

\medskip
\noindent Choose a triad embedding $P = (T_1 \times I \times [0,1]) \sqcup (T_2 \times S^1 \times [0,1]) \hookrightarrow M$
such that $M \backslash \mr{int}(P)$ is the disjoint union of a component $M'$ with $\partial M' = M \cap \partial_0(M \backslash \mr{int}(P))$ and collars on components of $\partial_0(M \backslash \mr{int}(P))$; see \cref{fig:boundary} for an example. By Step \ref{enum:step-line-over-1dim} and \cref{lem:thick-arc-discs}, we may apply isotopy extension as in Step \ref{enum:step-line-over-1dim} to the restriction map $\Emb_{\partial_0}(M,N) \ra \Emb_{\partial_0}(P,N)$ and its $T_\infty$-version. From Step \ref{enum:step-connected-full-bdy} and \cref{lem:forget-collar} we see that the map between fibres is a weak equivalence, from which we conclude the claim.

\subsection*{Step \ref{enum:step-general}}\textit{Convergence for $(M,N)$ if $\dim(M)=\dim(N)=2$}

\medskip
\noindent This is a induction on the number $n$ of components of $M$. The initial case $n=1$ is the previous one, and for the induction step we write $M = M' \sqcup M''$ with $M'$ connected. The induction hypothesis applied to $M'$ together with \cref{lem:thick-arc-discs} ensures that we may apply isotopy extension (see \cref{sec:formal-properties} \ref{isotopy-extension}) to the restriction $\Emb_{\partial_0}(M,N)\ra \Emb_{\partial_0}(M',N)$ and its $T_\infty$-version from which the claim follows by noting that the map on fibres is a weak equivalence by applyinh the induction hypothesis to $M''$.

\subsection*{Step \ref{enum:step-both-1dim}}\textit{Convergence for $(M,N)$ if $\dim(M)=\dim(N)=1$}
This can be proved similarly to Step \ref{enum:surfaces} but is easier. We outline the argument.

First one proves the case $M=D^1$ with $\partial_0(D^1)=\{-1,1\}$ by a strategy analogously to Step \ref{enum:step-disc}: one first uses \cref{cor:add-components} to reduce to $N=D^1$ as in the case for surfaces. Then one takes $H = [-1/2,1]$, $J = [-1/4,1/4]$, and $D^1_0 = D^1 \backslash \mr{int}(J)$ and develops a map of fibre sequences
\[\begin{tikzcd} \Emb_{\partial_0}(H_0,D^1_0) \rar \dar & \Emb_{\partial_0}(H,D^1) \dar{\simeq} \rar & \Emb_{\partial_0}(J,D^1) \dar{\simeq}  \\ 
	T_\infty \Emb_{\partial_0}(H_0,D^1_0) \rar & T_\infty \Emb_{\partial_0}(H,D^1) \rar & T_\infty \Emb_{\partial_0}(J,D^1). \end{tikzcd}\]
similar to Step \ref{enum:step-disc}. Using \cref{lem:forget-collar} and \cref{cor:add-components}, the map on fibres agrees with 
$\Emb_\partial(D^1,D^1) \ra T_\infty \Emb_\partial(D^1,D^1)$,
so it is a weak equivalence.

Next one shows the case of a general connected triad $M$: the case $M=S^1$ follows directly by an application of isotopy extension (see \cref{sec:formal-properties} \ref{isotopy-extension}) together with the case $M=D^1$ above, and the cases $M=[0,1]$ with $\partial_0(M)=\{0\}$ or $\partial_0(M)=\varnothing$ hold by \cref{sec:formal-properties} \ref{convergence}. 

Finally, the case of a possibly disconnected triad $M$ can be settled as in Step \ref{enum:step-connected}.

\subsection{Automorphisms of the $E_1$- and $E_2$-operad}\label{sec:aut-little-discs}The above arguments do not rely on the fact that $\Diff_\partial(D^d)=\Emb_\partial(D^d,D^d)$ is contractible for $d\le 2$ (this is folklore for $d=1$ and due to Smale for $d=2$ \cite{Smale}). Using this fact, we may conclude from \cref{thm:main-general-triads} that $T_\infty\Emb_\partial(D^d,D^d)$ is contractible for $d\le2$. Combining Theorems 1.2, 1.4, and 6.4 of \cite{BoavidaWeissConf}, we have \[T_\infty \Emb_\partial(D^d,D^d)\simeq \Omega^{d+1}\mr{Aut}^h(E_d)/\mr{O}(d)\] where $\mr{Aut}^h(E_d)/\mr{O}(d)$ is the homotopy fibre of the map $B\mr{O}(d)\ra B\mr{Aut}^h(E_d)$
resulting from the standard action of $\mr{O}(d)$ on the little discs operad by derived operad automorphisms, so we deduce:

\begin{corollary}\label{cor:haute2} $\Omega^{d+1}\mr{Aut}^h(E_d)/\mr{O}(d)\simeq *$ for $d\le 2$.
\end{corollary}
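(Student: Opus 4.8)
The plan is to chain the convergence result of this paper with two further inputs. First, for $d\le 2$ one has $\Emb_\partial(D^d,D^d)=\Diff_\partial(D^d)$, and this space is contractible: the case $d=0$ is trivial, the case $d=1$ follows from a straight-line (Alexander-trick) contraction of the group of diffeomorphisms of $[0,1]$ fixing the boundary, and the case $d=2$ is Smale's theorem \cite{Smale}. Applying \cref{thm:main-general-triads} with $M=N=D^d$, $\partial_0M=\partial M=S^{d-1}$, and $e_\partial=\mathrm{id}$, the embedding calculus map $\Emb_\partial(D^d,D^d)\to T_\infty\Emb_\partial(D^d,D^d)$ is a weak equivalence, so $T_\infty\Emb_\partial(D^d,D^d)\simeq *$ for $d\le 2$.

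Second, I would invoke the identification of embedding calculus of a disc in terms of derived automorphisms of the little $d$-discs operad due to Boavida de Brito and Weiss. Combining \cite[Theorems 1.2, 1.4, and 6.4]{BoavidaWeissConf} yields a weak equivalence $T_\infty\Emb_\partial(D^d,D^d)\simeq \Omega^{d+1}\bigl(\mathrm{Aut}^h(E_d)/\mathrm{O}(d)\bigr)$, where $\mathrm{Aut}^h(E_d)/\mathrm{O}(d)$ is the homotopy fibre of the map $B\mathrm{O}(d)\to B\mathrm{Aut}^h(E_d)$ induced by the standard action of $\mathrm{O}(d)$ on $E_d$ by derived operad automorphisms. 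Feeding in the contractibility established in the previous step gives $\Omega^{d+1}\bigl(\mathrm{Aut}^h(E_d)/\mathrm{O}(d)\bigr)\simeq *$, which is the claim.

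The only point requiring care is matching conventions between the presheaf model of embedding calculus used here (\cref{sec:presheaf-model}, with the prescribed collar boundary condition) and the model underlying \cite{BoavidaWeissConf}; but this is a compatibility-of-models bookkeeping matter rather than a genuine obstacle, since the equivalence of the presheaf model with the classical one is recorded in \cite[Section 8]{BoavidaWeissSheaves}. There is no substantive difficulty here: the corollary is an immediate consequence of \cref{thm:main-general-triads}, the contractibility of $\Diff_\partial(D^d)$ for $d\le 2$, and the cited operadic identification, so the ``hard part'' has already been done in proving \cref{thm:main-general-triads}.
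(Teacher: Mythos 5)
Your argument is exactly the paper's: apply \cref{thm:main-general-triads} with $M=N=D^d$, use the contractibility of $\Diff_\partial(D^d)$ for $d\le 2$ (folklore for $d\le 1$, Smale for $d=2$) to conclude that $T_\infty\Emb_\partial(D^d,D^d)\simeq *$, and then invoke the identification $T_\infty\Emb_\partial(D^d,D^d)\simeq \Omega^{d+1}\mr{Aut}^h(E_d)/\mr{O}(d)$ from \cite[Theorems 1.2, 1.4, 6.4]{BoavidaWeissConf}. This matches the paper's proof in both structure and cited inputs.
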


\begin{remark}Horel \cite[Theorem 8.5]{HorelAut} proved that $\mr{Aut}^h(E_2)/\mr{O}(2)\simeq *$ with different methods. His proof crucially uses the spaces of $k$-arity operations in the operad $E_2$ are $K(\pi,1)$ for all $k$. This fact can also be used to give an alternative proof of $\Omega^2\mr{Aut}^h(E_2)\simeq \ast$ (and thus of \cref{cor:haute2}): the derived mapping space $\mathrm{Map}^h(O,P)$ between operads $O$ and $P$ can be computed as a homotopy limit of a diagram whose values are products of spaces of operations in $O$ and $P$; this follows e.g.\,by using the alternative model of operads in terms of dendroidal Segal spaces. Applied to $O=P=E_2$, one sees that $\mathrm{Map}^h(E_2,E_2)$ is a homotopy limit of $K(\pi,1)$, so it is contractible after looping twice.
\end{remark}

\section{Embedding calculus and the Johnson filtration}\label{sec:johnson} 
This section serves to introduce the filtration \eqref{equ:filtration-embcalc} of the mapping class group $\pi_0\,\Diff_\partial(\Sigma_{g,1})$, and to prove in \cref{thm:johnson-filtration} that it is contained in the Johnson filtration.

\subsection{The cardinality filtration} Returning to the general setting of manifold calculus of \cref{sec:presheaf-model} with a fixed $(d-1)$-manifold $K$, possibly with boundary, we consider the filtration 
\begin{equation}\label{equ:disc-filtration}
\cat{Disc}_{{\partial_0},\le0}\subset\cat{Disc}_{{\partial_0},\le1}\subset\cdots\subset \cat{Disc}_{{\partial_0},\le\infty}\coloneqq\cat{Disc}_{{\partial_0}}
\end{equation}
of the topologically enriched category $\cat{Disc}_{{\partial_0}}$ by its full subcategories $\cat{Disc}_{{\partial_0},\le k}$ on triads that are diffeomorphic to $K\times [0,1)\sqcup T\times \mathbb{R}^d$ for finite sets $T$ of bounded cardinality $\le k$. Localising the categories $\cat{PSh}(\cat{Disc}_{\partial_0,\le k})$ at the objectwise weak equivalences as we did for $k=\infty$ in \cref{sec:presheaf-model}, given a presheaf $F\in\cat{PSh}(\cat{Disc}_{\partial_0})$ we obtain presheaves on $\cat{Man}_{\partial_0}$ by
\[T_kF(M)\coloneqq  \Map_{\cat{PSh}(\cat{Disc}_{\partial_0,\le k})^{\loc}}(\Emb_{\partial_0}(-,M),F )\]
which are related by maps of presheaves
\begin{equation}\label{equ:cardinality-filtration-mfd-calc}
T_\infty F(M)\lra\cdots\lra T_2 F(M)\lra T_1F(M)
\end{equation}
induced by restriction along the inclusions \eqref{equ:disc-filtration}. If $F$ is the restriction of a presheaf on $\cat{Man}_{\partial_0}$, we can precompose this tower with the canonical map $F(M)\ra T_\infty F(M)$ from \eqref{equ:manifold-calculus-taylor}.

\subsubsection{Sheaf-theoretic point of view}The tower \eqref{equ:cardinality-filtration-mfd-calc} can also be seen from the point of view of $\cJ_k$-sheaves as described in \cref{sec:formal-properties-mfd} \ref{descent}: by \cite[Theorem 1.2]{BoavidaWeissSheaves} the functor
\[\cat{PSh}(\cat{Man}_{\partial_0})\ni F\longmapsto T_k F\in \cat{PSh}(\cat{Man}_{\partial_0})\]
together with the natural transformation $\mr{id}_{\cat{PSh}(\cat{Man}_{\partial_0})}\Rightarrow T_k$ is a model for the homotopy $\cJ_k$-sheafification. From this point of view the maps \eqref{equ:cardinality-filtration-mfd-calc} are induced by the universal property of homotopy sheafification, using the fact that any $\cJ_{k+1}$-sheaf is in particular a $\cJ_{k}$-sheaf. 

In particular, in the case of embedding calculus, i.e.\,for presheaves $F(-)=\Emb_{\partial_0}(-,N)$ for triads $M$ and $N$ and a boundary condition $e_{\partial_0}\colon \partial_0M\hookrightarrow \partial_0N$ (see Example~\ref{ex:emb-calc}), this implies that there is a factorisation of the map from the discussion in \cref{sec:formal-properties} \ref{mapping-space} of the form
\begin{equation}\label{equ:t1-factorisation-mapping-space}
T_\infty\Emb_{\partial_0}(M,N)\lra\cdots\lra T_1\Emb_{\partial_0}(M,N)\lra \Bun_{\partial_0}(TM,TN)\lra \Map_{\partial_0}(M,N).
\end{equation}
\subsection{$H\bZ$-embedding calculus} 
Much of the above goes through for presheaves valued in categories other than spaces. We have use for one such generalisation, which we discuss now.

It involves the topologically enriched category $\cat{Sp}$ of spectra and the topologically enriched category  $H\bZ\text{-}\cat{mod}$ of module spectra over the Eilenberg--Mac Lane spectrum $H\bZ$, both modelled for example using symmetric spectra in spaces as in \cite{MMSS}. We denote by $\cat{PSh}^{H\mathbb{Z}}(\cat{Disc}_{\partial_0})$ the category of $H\bZ$-module spectrum-valued enriched presheaves on $\cat{Disc}_{\partial_0}$, and its localisation at the objectwise stable equivalences by $\cat{PSh}^{H\bZ}(\cat{Disc}_{\partial_0})^\mr{loc}$. The composition of the left-adjoints  $\Sigma^\infty_+ \colon \cat{Top} \to \cat{Sp}$ and $- \wedge H\bZ \colon \cat{Sp} \to H\bZ\text{-}\cat{mod}$ induces the vertical arrows in the commutative diagram
\begin{equation}
\label{equ:smash-HZ}
\begin{tikzcd} \cat{PSh}(\cat{Disc}_{\partial_0}) \rar \dar[swap]{(-)_+ \wedge H\bZ} & \cat{PSh}(\cat{Disc}_{\partial_0})^\mr{loc} \dar{(-)_+ \wedge H\bZ} \\
\cat{PSh}^{H\bZ}(\cat{Disc}_{\partial_0}) \rar & \cat{PSh}^{H\bZ}(\cat{Disc}_{\partial_0})^\mr{loc}.
\end{tikzcd}
\end{equation}
For a presheaf $F \in \cat{PSh}(\cat{Disc}_{\partial_0})$ we define presheaves 
\[T_k^{H\bZ} F(M) \coloneqq 	\Map_{\cat{PSh}^{H\bZ}(\cat{Disc}_{\partial_0,\le k})^{\loc}}(\Emb_{\partial_0}(-,M)_+ \wedge H\bZ,F_+ \wedge H\bZ)\] for $1\leq k\leq\infty$, giving rise to an extension of the tower \eqref{equ:cardinality-filtration-mfd-calc} to a map of towers
\begin{equation}\label{equ:homology-mfd-calc}
\begin{tikzcd}[column sep=0.5cm ]
T_\infty F(M)\rar\dar&\cdots\rar& T_2 F(M)\rar\dar& T_1F(M)\dar\\
T^{H\bZ}_\infty F(M)\rar&\cdots\rar& T^{H\bZ}_2 F(M)\rar& T^{H\bZ}_1F(M)
\end{tikzcd}
\end{equation}
whose vertical maps are induced by \eqref{equ:smash-HZ} and whose horizontal maps are induced by restriction along \eqref{equ:disc-filtration}. Note that for $F(-)=\Emb_{\partial_0}(-,M)$, composition induces an $E_1$-structure on $T_kF(M)=T_k\Emb_{\partial_0}(M,M)$ and $T_k^{H\bZ} F(M)=T^{H\bZ}_k\Emb_{\partial_0}(M,M)$ which upgrades \eqref{equ:homology-mfd-calc} to a diagram of $E_1$-spaces.

\begin{remark}In \cite{WeissHomology}, Weiss considers manifold calculus applied to the space-valued presheaf $\Omega^\infty(\Emb_{\partial_0}(-,M)_+ \wedge H\bZ)$. This agrees with the above $H\bZ$-embedding calculus since the adjunctions $\Sigma^\infty_+ \dashv \Omega^\infty$ and $- \wedge H\bZ \dashv U$, with $U \colon H\bZ\cat{mod} \to \cat{Sp}$ the forgetful functor, induce adjunctions on presheaf categories, which in turn induce for $F \in \cat{PSh}(\cat{Man}_{\partial_0})$ and $1\leq k\le \infty$ an identification 
\[\begin{tikzcd}\Map_{\cat{PSh}(\cat{Disc}_{\partial_0M,\le k})^{\loc}}(\Emb_{\partial_0}(-,M),\Omega^\infty (F_+ \wedge H\bZ)) \dar{\simeq} \\[-5pt]
	T^{H\bZ}_k F(M)=\Map_{\cat{PSh}^{H\bZ}(\cat{Disc}_{\partial_0M,\le k})^{\loc}}(\Emb_{\partial_0}(-,M)_+ \wedge H\bZ,F_+ \wedge H\bZ)\end{tikzcd}\]
which is compatible with the restrictions maps. 
\end{remark}

\subsection{An $H\bZ$-embedding calculus filtration of $\pi_0\,\Diff_\partial(\Sigma)$} We fix a compact orientable $\Sigma$ of genus $g$ with a single boundary component. A naive attempt at a filtration of the mapping class group $\pi_0\,\Diff_\partial(\Sigma)$ as promised in the introductory \cref{sec:intro-johnson} would be to consider the kernels of the maps $\pi_0\,\Diff_\partial(\Sigma)=\pi_0\,\Emb_\partial(\Sigma,\Sigma)\ra \pi_0\,T_k\Emb_\partial(\Sigma,\Sigma)$ for varying $k$, but these turn out to be trivial for all $k\ge1$ simply because the composition of the above map with the map to $\pi_0\,\Map_{\partial}(\Sigma,\Sigma)$ from \eqref{equ:t1-factorisation-mapping-space} is injective (see \cref{rem:dnb}). To obtain a more interesting filtration, we perform two modifications.

Firstly, we change the triad structure of $\Sigma$. Instead of $\partial_0\Sigma=\partial \Sigma$ we choose $\partial_0\Sigma\subset \partial \Sigma$ to be an embedded interval. We think of $\partial_0\Sigma$ as ``half the boundary'' and abbreviate $\nicefrac{\partial}{2}\coloneqq \partial_0\Sigma\cong [0,1]$. Note that the inclusion $\Diff_\partial(\Sigma)\subset \Emb_{\nicefrac{\partial}{2}}(\Sigma,\Sigma)$ is a homotopy equivalence since its homotopy fibres are equivalent to $\Diff_\partial(D^2)\simeq \ast$. The maps 
$\pi_0\,\Diff_\partial(\Sigma)=\pi_0\,\Emb_{\nicefrac{\partial}{2}}(\Sigma,\Sigma)\ra \pi_0\,T_k\Emb_{\nicefrac{\partial}{2}}(\Sigma,\Sigma)$ still do not give rise to an interesting filtration, for a similar reason as above since the map $\pi_0\,\Map_\partial(\Sigma,\Sigma)\ra \pi_0\,\Map_{\nicefrac{\partial}{2}}(\Sigma,\Sigma)$ is injective. The filtration becomes more interesting after the second modification: we switch from embedding calculus to embedding calculus in $H\bZ$-modules as described above. More precisely, we consider the filtration
\begin{equation}\label{equ:emb-calc-filtration}\pi_0\,\Diff_\partial(\Sigma)= T\mathcal{J}_{\nicefrac{\partial}{2}}^{H\mathbb{Z}}(0)\supset T\mathcal{J}_{\nicefrac{\partial}{2}}^{H\mathbb{Z}}(1)\supset T\mathcal{J}_{\nicefrac{\partial}{2}}^{H\mathbb{Z}}(2)\supset \cdots\end{equation}
defined by  
\[T\mathcal{J}_{\nicefrac{\partial}{2}}^{H\mathbb{Z}}(k)\coloneqq \ker\big[\pi_0\,\Diff_\partial(\Sigma)\ra \pi_0\,T^{H\mathbb{Z}}_k\Emb_{\nicefrac{\partial}{2}}(\Sigma,\Sigma)\big],\] where we formally set $\pi_0\,T^{H\mathbb{Z}}_k\Emb_{\nicefrac{\partial}{2}}(\Sigma,\Sigma)\coloneqq *$. Denoting by
\begin{equation}\label{equ:johnson-filtration}\pi_0\,\Diff_\partial(\Sigma)=\mathcal{J}(0)\supset \mathcal{J}(1)\supset \mathcal{J}(2)\supset \cdots\end{equation} the usual Johnson filtration 
\[\mathcal{J}(k)\coloneqq \ker\left[\pi_0\,\Diff_\partial(\Sigma)\ra\mathrm{Aut}\left(\frac{\pi_1(\Sigma,*)}{\Gamma_k(\pi_1(\Sigma,*))}\right)\right]\]
where $\Gamma_i(-)$ is the $i$th stage in the lower central series of a group (so $\Gamma_0(G)=G$ and $\Gamma_1(G)$ is the derived subgroup of $G$), the purpose of this section is to relate the filtrations \eqref{equ:emb-calc-filtration} and \eqref{equ:johnson-filtration} as follows.

\begin{theorem}\label{thm:johnson-filtration}For a compact orientable surface $\Sigma$ with a single boundary component, the subgroup \[T\mathcal{J}_{\nicefrac{\partial}{2}}^{H\mathbb{Z}}(k)=\ker\big[\pi_0\,\Diff_\partial(\Sigma)\ra \pi_0\,T^{H\mathbb{Z}}_k\Emb_{\nicefrac{\partial}{2}}(\Sigma,\Sigma)\big]\] 
is contained in the $k$th stage $\mathcal{J}(k)$ of the Johnson filtration for $k\ge0$.
\end{theorem}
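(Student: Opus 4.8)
The plan is to reduce \cref{thm:johnson-filtration} to Moriyama's description \cite{Moriyama} of the Johnson filtration: the subgroup $\mathcal{J}(k)\subset\pi_0\,\Diff_\partial(\Sigma)$ coincides with the kernel of the action of the mapping class group on the compactly supported cohomology groups $H^*_c(F_n(\mr{int}(\Sigma));\bZ)$ of the ordered configuration spaces of $n\le k$ points in the punctured surface $\mr{int}(\Sigma)\cong\Sigma_g\backslash\{*\}$, compatibly with the structure maps relating these. Granting this, and observing that the composite $\pi_0\,\Diff_\partial(\Sigma)\to\pi_0\,T^{H\bZ}_k\Emb_{\nicefrac{\partial}{2}}(\Sigma,\Sigma)$ is, by Example~\ref{ex:emb-calc} and \cref{sec:formal-properties} \ref{postcomposition}, the map sending $\phi$ to the $H\bZ$-linearisation of post-composition with $\phi$ (restricted to $\cat{Disc}_{\nicefrac{\partial}{2},\le k}$), it suffices to show that every $\phi\in T\mathcal{J}_{\nicefrac{\partial}{2}}^{H\bZ}(k)$ acts trivially on $H^*_c(F_n(\mr{int}(\Sigma));\bZ)$ for all $n\le k$.

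So suppose $\phi\in T\mathcal{J}_{\nicefrac{\partial}{2}}^{H\bZ}(k)$, i.e.\ the endomorphism of the presheaf $\Emb_{\nicefrac{\partial}{2}}(-,\Sigma)_+\wedge H\bZ$ on $\cat{Disc}_{\nicefrac{\partial}{2},\le k}$ given by post-composition with $\phi$ is homotopic to the identity in $\cat{PSh}^{H\bZ}(\cat{Disc}_{\nicefrac{\partial}{2},\le k})^{\loc}$. Evaluating such a homotopy at the objects $c_n\coloneqq\nicefrac{\partial}{2}\times[0,1)\sqcup\underline{n}\times\bR^2$ with $n\le k$ shows that $\phi$ induces the identity on $\pi_*(\Emb_{\nicefrac{\partial}{2}}(c_n,\Sigma)_+\wedge H\bZ)=H_*(\Emb_{\nicefrac{\partial}{2}}(c_n,\Sigma);\bZ)$, naturally in the morphisms of $\cat{Disc}_{\nicefrac{\partial}{2},\le k}$. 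Contracting the space of collars identifies $\Emb_{\nicefrac{\partial}{2}}(c_n,\Sigma)$ with the space of embeddings $\underline{n}\times\bR^2\hookrightarrow\mr{int}(\Sigma)$, hence with the framed ordered configuration space $F^{\mr{fr}}_n(\mr{int}(\Sigma))$; as $\mr{int}(\Sigma)$ is an open, hence parallelisable, surface there is a $\phi$-equivariant splitting $F^{\mr{fr}}_n(\mr{int}(\Sigma))\cong F_n(\mr{int}(\Sigma))\times\mr{GL}_2(\bR)^n$ in which $\phi$ acts on the second factor through left translations by matrices of positive determinant (since $\phi$ is orientation-preserving), hence trivially on its homology. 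Thus $\phi$ acts trivially on $H_*(F_n(\mr{int}(\Sigma));\bZ)$, and by Poincar\'e--Lefschetz duality for the oriented open $2n$-manifold $F_n(\mr{int}(\Sigma))$ also trivially on $H^*_c(F_n(\mr{int}(\Sigma));\bZ)\cong H_{2n-*}(F_n(\mr{int}(\Sigma));\bZ)$, for all $n\le k$ and compatibly with the structure maps. Moriyama's theorem then yields $\phi\in\mathcal{J}(k)$.

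The main difficulty is bookkeeping rather than conceptual: one has to pin down the exact indexing conventions in \cite{Moriyama} so that triviality of the action on configuration spaces of $\le k$ points gives membership in $\mathcal{J}(k)$ rather than merely $\mathcal{J}(k-1)$, and check that the functoriality used there --- the restriction maps between configuration spaces of different cardinalities, and the symmetric-group actions --- matches, under the identification above, the functoriality of $\cat{Disc}_{\nicefrac{\partial}{2},\le k}$ that the $H\bZ$-calculus tower remembers. The orientation and framing bookkeeping in passing from $\Emb_{\nicefrac{\partial}{2}}(c_n,\Sigma)$ to $F_n(\mr{int}(\Sigma))$ --- including that $\phi$ restricts to a proper orientation-preserving diffeomorphism of $\mr{int}(\Sigma)$ fixing a neighbourhood of the puncture --- also needs care but is routine.
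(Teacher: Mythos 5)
Your reduction bypasses the heart of the paper's argument, and the step where you invoke Moriyama is where the gap sits. Moriyama's theorem (as the paper uses it) identifies $\mathcal{J}(k)$ with the kernel of the action on the \emph{relative} homology $H_*(\Sigma^k,\Delta_k\cup A_k;\bZ)$, which is excision-equivalent to $H_*(\FM_k(\Sigma),\partial_0\FM_k(\Sigma);\bZ)$. This group is not a function of the absolute homologies $H_*(F_n(\mr{int}\Sigma))$, $n\le k$, alone: it records the whole boundary map $\partial_0\FM_k\hookrightarrow\FM_k$, i.e.\ the way configuration spaces of different cardinalities fit together along the collision strata. Knowing that $\phi$ acts trivially on each $H_*(\Emb(c_n,\Sigma))$ only gives you triviality on the two outer columns of the relevant long exact sequence, which a priori yields a unipotent (not necessarily trivial) action on the relative term. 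The paper's \cref{prop:hlan-fm} is precisely the device that closes this gap: it exhibits the entire map $\partial_0\FM_k(\Sigma)\to\FM_k(\Sigma)$, functorially, as $\mathrm{hLan}_{\iota_k}\,\Emb_{\partial_0}(-,\Sigma)(c_k)\sslash O(2)^k\to\Emb_{\partial_0}(c_k,\Sigma)\sslash O(2)^k$, so that the $\phi$-action on the \emph{square}, hence on the relative homology, is a functor of $\phi_*\in\cat{PSh}^{H\bZ}(\cat{Disc}_{\le k})^{\loc}$. Your argument never produces this control of the pair.

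There are also errors in the intermediate steps. The claimed ``$\phi$-equivariant splitting'' $F_n^{\mr{fr}}(\mr{int}\Sigma)\cong F_n(\mr{int}\Sigma)\times\mr{GL}_2(\bR)^n$ is not $\phi$-equivariant: $\phi$ acts on the framing factor through its point-dependent derivative, not through a fixed left translation. (The conclusion that triviality on $H_*(F_n^{\mr{fr}})$ gives triviality on $H_*(F_n)$ does hold, but only because the projection $F_n^{\mr{fr}}\to F_n$ is $\phi$-equivariant and split surjective on homology — a different argument than the one you wrote.) Finally, your parenthetical ``compatibly with the structure maps relating these'' cannot be made precise as stated: compactly supported cohomology is not a homotopy invariant and is not contravariantly functorial for the (non-proper) restriction maps between configuration spaces induced by $\cat{Disc}_{\le k}$-morphisms, so there is no obvious sense in which the $H^*_c(F_n)$'s carry the functoriality you would need, and one cannot freely swap $\mr{int}(\Sigma)$ for $\Sigma\setminus\nicefrac{\partial}{2}$ when computing $H^*_c$. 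These are not bookkeeping issues: the content of \cref{prop:hlan-fm} is exactly the replacement of this ill-defined ``compatibility'' with a precise statement.
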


\begin{remark}\quad
	\begin{enumerate}[label=(\roman*),leftmargin=*]
		\item The group $\pi_1(\Sigma,*)$ is free, so it is residually nilpotent (that is, $\cap_k \Gamma_k(\pi_1(\Sigma,*))=\{1\}$), which implies that the Johnson filtration is exhaustive, i.e.\ $\cap_k \mathcal{J}(k) = \{\mr{id}\}$. By \cref{thm:johnson-filtration}, the same holds for $\{T\mathcal{J}_{\nicefrac{\partial}{2}}^{H\mathbb{Z}}(k)\}$ so in particular the map $\pi_0 \,\Diff_\partial(\Sigma) \to \pi_0\,T^{H\bZ}_\infty \Emb_{\nicefrac{\partial}{2}}(\Sigma,\Sigma)$ is injective.
		\item If the genus of $\Sigma$ is at least $3$, then the inclusion $\smash{T\mathcal{J}_{\nicefrac{\partial}{2}}^{H\mathbb{Z}}(1)}\subset \mathcal{J}(1)$ is strict. Indeed, an element of the mapping class group lies in $T\mathcal{J}_{\nicefrac{\partial}{2}}^{H\mathbb{Z}}(1)$ if and only if induced the identity on the homology of frame bundle $\mr{Fr}(T\Sigma)$. By \cite[Theorem 2.2, Corollary 2.7]{Trapp}, this is the case if and only if it lies in the \emph{Chillingworth subgroup} of the Torelli subgroup $\mathcal{J}(1)$ \cite{ChillingworthI,ChillingworthII}.
	\end{enumerate}
\end{remark}

\cref{thm:johnson-filtration} and the final part of the previous remark suggest:

\begin{question}What is the precise relationship between the Johnson filtration $\mathcal{J}(k)$ and the filtration $\mathcal{J}^{H\bZ}_{\nicefrac{\partial}{2}}(k)$ arising from the $H\bZ$-embedding calculus tower?\end{question}

We will deduce \cref{thm:johnson-filtration} from Moriyama's work \cite{Moriyama}. The key step for this deduction is not special to surfaces and applies to a general $d$-dimensional manifold triad $M$, so we will formulate it in this generality. To do so, we fix a  presheaf $F \in \cat{PSh}(\cat{Disc}_{\partial_0,\leq k})$, restrict it to $\cat{Disc}_{\partial_0,\leq k-1}$ and homotopy left Kan extending it back along the inclusion $\iota_k\colon \cat{Disc}_{\partial_0,\leq k-1}\subset  \cat{Disc}_{\partial_0,\leq k}$ to obtain a presheaf $\mr{hLan}_{\iota_k}\, F$ with a natural map 
$\mr{hLan}_{\iota_k}\, F\ra F$. Evaluating it at 
\[\partial_0 \sqcup \bR^d_\ul{k} \coloneqq \partial_0 M \times [0,1) \sqcup \ul{k} \times \bR^d\] where $\ul{k}\coloneqq\{1,\ldots,k\}$ we get a map of $\Sigma_k \wr O(d)$-spaces
$(\mr{hLan}_{\iota_k}\,F)(\partial_0 \sqcup \bR^d_\ul{k}) \ra F(\partial_0 \sqcup \bR^d_\ul{k})$,
and then taking homotopy quotients by the subgroup $O(d)^k \subset \Sigma_k \wr O(d)$ gives a map
\begin{equation}\label{equ:quotient-hLan}(\mr{hLan}_{\iota_k}\, F)(\partial_0 \sqcup \bR^d_\ul{k})\sslash O(d)^k \lra F(\partial_0 \sqcup \bR^d_\ul{k}) \sslash O(d)^k.\end{equation}
In Proposition~\ref{prop:hlan-fm} below, we relate this map for $F(-)=\Emb_{\partial_0}(-,M)$ to a certain ``boundary inclusion'' of the ordered configuration spaces $\Emb(\ul{k},M)$. For this, recall the \emph{Fulton--Mac Pherson compactification} $\FM_k(M)$ of $\Emb(\ul{k},M)$ (e.g.\,from \cite{Sinha}) which comes with a natural inclusion $\Emb(\ul{k},M) \hookrightarrow \FM_k(M)$ that is homotopy equivalence, and a ``macroscopic location'' map $\mu \colon \FM_k(M) \to M^k$ that extends the inclusion $\Emb(\ul{k},M) \hookrightarrow M^k$. We write $\partial_0 \FM_k(M)$ for the preimage $\mu^{-1}(\Delta_k \cup A_k)$ of the union of the subspace $A_k \subset M^k$ where at least one point lies in $\partial_0 M$ and the fat diagonal $\Delta_k\coloneqq \{(m_1,\ldots,m_i)\in M^k|m_i=m_j\text{ for some }i\neq j\}\subset M^k$.


The key step in the proof of \cref{thm:johnson-filtration} is to identify the map \eqref{equ:quotient-hLan} for $F(-)=\Emb_{\partial_0}(-,M)$ with the boundary inclusion $\partial_0 \FM_k(M) \subset \FM_k(M)$ in the following sense:

\begin{proposition}\label{prop:hlan-fm}There is zig-zag of compatible weak equivalences
\[
\begin{tikzcd}
(\mr{hLan}_{\iota_k}\, \Emb_{\partial_0}(-,M))(\partial_0 \sqcup \bR^d_\ul{k})\sslash O(d)^k\dar\rar{\simeq}&\cdots\dar&\arrow["\simeq",l,swap]\partial_0 \FM_k(M)\dar\\
\Emb(\partial_0 \sqcup \bR^d_\ul{k},M) \sslash O(d)^k\rar{\simeq}&\cdots&\arrow["\simeq",l,swap]\FM_k(M),
\end{tikzcd}
\]
which, when varying $M$, defines a zig-zag of weak equivalences in the arrow category of $\cat{Fun}(\Man_{\partial_0},\cat{S})$.
\end{proposition}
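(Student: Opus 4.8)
The plan is to prove Proposition~\ref{prop:hlan-fm} by interpolating through a Fulton--Mac Pherson model for the presheaf $\Emb_{\partial_0}(-,M)$ on $\cat{Disc}_{\partial_0,\le k}$: one leg of the zig-zag is a change of model, which is formal, and the other is an explicit computation of $\mr{hLan}_{\iota_k}$ applied to this model, identifying it with the boundary inclusion $\partial_0\FM_k(M)\subset\FM_k(M)$.

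First I would treat the bottom row. Discarding the contractible space of collar data, $\Emb_{\partial_0}(\partial_0 \sqcup \bR^d_{\ul k},M)$ is weakly equivalent to the space of $\ul k$ disjoint embeddings $\bR^d\hookrightarrow\mr{int}(M)$, and scaling each disc towards its centre exhibits the latter as weakly equivalent to the \emph{framed} Fulton--Mac Pherson space $\FM^{\mr{fr}}_k(M)\coloneqq\mu^*\big(\mr{Fr}(TM)^{\times k}\big)$, the pullback of the $k$-fold power of the frame bundle along $\mu\colon\FM_k(M)\to M^k$; the homotopy quotient by $O(d)^k$ then collapses the frame data and leaves $\Emb(\ul k,\mr{int}(M))\simeq\Emb(\ul k,M)\simeq\FM_k(M)$. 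These equivalences are $O(d)^k$-equivariant and compatible with the structure maps of $\cat{Disc}_{\partial_0,\le k}$, so carrying them out uniformly over all objects $\partial_0 \sqcup \bR^d_{\ul j}$ with $j\le k$ produces a natural objectwise weak equivalence of presheaves $\Emb_{\partial_0}(-,M)\simeq\cE_M(-)$ on $\cat{Disc}_{\partial_0,\le k}$, where $\cE_M(\partial_0 \sqcup \bR^d_{\ul j})=\FM^{\mr{fr}}_j(M)$ with structure maps the framed operadic insertions of configurations in $\bR^d$ and insertions into the collar; here one can cite \cite{BoavidaWeissConf} for this configuration-category model and its compatibility with the cardinality filtration, or extract it from the above. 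Since $\mr{hLan}_{\iota_k}$ is a homotopy left Kan extension, and hence homotopy-invariant, applying it to this equivalence gives the left-hand horizontal equivalences of the asserted zig-zag, compatibly with the vertical maps down to $\cE_M(\partial_0 \sqcup \bR^d_{\ul k})$ and $\FM_k(M)$.

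It then remains to identify $(\mr{hLan}_{\iota_k}\cE_M)(\partial_0 \sqcup \bR^d_{\ul k})$ with the framed boundary $\partial_0\FM^{\mr{fr}}_k(M)$, the restriction of $\FM^{\mr{fr}}_k(M)$ over $\partial_0\FM_k(M)=\mu^{-1}(\Delta_k\cup A_k)$, in such a way that after $\sslash O(d)^k$ the natural map $\mr{hLan}_{\iota_k}\cE_M\to\cE_M$ becomes the boundary inclusion. By the pointwise formula for homotopy left Kan extensions, the left-hand side is the homotopy colimit of $\cE_M$ over the relevant comma category, whose objects are the triad embeddings $\partial_0 \sqcup \bR^d_{\ul k}\hookrightarrow\partial_0 \sqcup \bR^d_{\ul j}$ with $j\le k-1$. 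Since each of the $k$ discs lands in one of the $j<k$ discs or in the collar, every such embedding has an underlying \emph{degeneration type}: a surjection $\ul k\twoheadrightarrow S$ with $|S|\le k-1$ recording the clustering, together with the subset of $S$ consisting of the blocks landing in the collar. Contracting the position-and-size data inside each target disc --- which is itself a framed Fulton--Mac Pherson space of $\bR^d$ --- toward the infinitesimal locus, one exhibits a homotopy-cofinal subcategory of the comma category indexed by the poset of degeneration types together with the nested-cluster (tree) structure of the Fulton--Mac Pherson compactification, on which $\cE_M$ evaluates to the framed Fulton--Mac Pherson space of the associated reduced configuration in $M$; the presence of the collar summand $\partial_0 M\times[0,1)$ in the disc objects is exactly what lets a cluster of points escape to $\partial_0 M$. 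This homotopy colimit is the standard presentation of $\partial_0\FM^{\mr{fr}}_k(M)$ as the homotopy colimit of its closed strata --- the images of the framed operadic insertion maps $\FM^{\mr{fr}}_j(M)\times\prod_{s\in S}\FM_{k_s}(\bR^d)\to\FM^{\mr{fr}}_k(M)$ with $\sum_s k_s=k$ and $j\le k-1$, together with the collar insertions accounting for the $A_k$ part --- which agrees with the honest union of strata because these inclusions are collared, cf.\ \cite{Sinha}. Passing to homotopy quotients by $O(d)^k$ and tracing through the comparison identifies $\mr{hLan}_{\iota_k}\cE_M\to\cE_M$ with $\partial_0\FM_k(M)\hookrightarrow\FM_k(M)$. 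Finally, as $\FM_k(-)$, the framed operadic insertions, the comma categories, and the presheaf $\cE_M$ are functorial for embeddings of triads $M\hookrightarrow M'$, the whole construction upgrades to a zig-zag of weak equivalences in the arrow category of $\cat{Fun}(\Man_{\partial_0},\cat{S})$.

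The main obstacle I anticipate is this last identification. One must pin down the homotopy-cofinal subcategory of the comma category --- this is where the combinatorics of how $k$ little discs together with a collar can sit inside $\le k-1$ little discs together with a collar enters, and one must check it is governed by the poset of decorated, nested degeneration types --- and then verify that the resulting homotopy colimit computes the homotopy type of $\partial_0\FM^{\mr{fr}}_k(M)$, i.e.\ that its closed strata are suitably collared so that the homotopy colimit over them agrees with their union, handling the collisions recorded by $\Delta_k$ and the boundary escapes recorded by $A_k$ on the same footing. The remaining ingredients --- the change of model, the homotopy-invariance of $\mr{hLan}_{\iota_k}$, the $O(d)^k$-quotients, and the functoriality in $M$ --- are formal.
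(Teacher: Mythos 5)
You handle the bottom row and the formal ingredients (change of model to Fulton--Mac Pherson configuration spaces, passing to $O(d)^k$-orbits, homotopy-invariance of $\mr{hLan}_{\iota_k}$, naturality in $M$) in a way that is essentially parallel to the paper. The genuine divergence is in how you compute $\mr{hLan}_{\iota_k}$ at $\partial_0\sqcup\bR^d_{\ul k}$. The paper models the left Kan extension by the bar construction $B\big(\Emb_{\partial_0}(\partial_0\sqcup\bR^d_{\ul k},-),\cat{Disc}_{\partial_0,\le k-1},\Emb_{\partial_0}(-,M)\big)$, replaces the left-hand functor by $\FM_k(-)$ after $O(d)^k$-orbits, notes by a pigeonhole argument that $\partial_0\FM_k\subset\FM_k$ is an objectwise equivalence on $\cat{Disc}_{\partial_0,\le k-1}$ so that the two bar constructions with $\partial_0\FM_k$ and with $\FM_k$ agree, and then proves (\cref{lem:fmk-geomrel}, the crux) that the augmentation $B(\partial_0\FM_k(-),\cat{Disc}_{\partial_0,\le k-1},\Emb_{\partial_0}(-,M))\to\partial_0\FM_k(M)$ is a weak equivalence via Weiss's criterion for Serre microfibrations with weakly contractible fibres. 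You instead propose to compute the homotopy colimit over the comma category directly, by exhibiting a homotopy-cofinal subcategory of ``degeneration types'' and then identifying the resulting colimit with a stratification of $\partial_0\FM_k(M)$.

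That route could in principle be made to work, but the two claims you flag as ``the main obstacle'' are precisely where the content lies, and neither is established. First, the comma category $(\iota_k\downarrow(\partial_0\sqcup\bR^d_{\ul k}))$ is a topological category with large spaces of continuous parameters (positions, radii, collar widths); the claim that the discrete poset of decorated nested partitions is homotopy-cofinal needs a genuine deformation-retraction argument, not just the observation that each object has a degeneration type. Second, even granting cofinality, recovering $\partial_0\FM_k(M)$ as a homotopy colimit of its closed strata over a tree poset is not a quotable fact: the strata are closed and overlap along lower strata, and one must either verify cofibrancy of that diagram or convert the collaring into an open cover before the homotopy colimit computes the union; moreover one must verify this compatibly for the $A_k$-escapes to $\partial_0M$ and the $\Delta_k$-collisions simultaneously. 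The paper's microfibration argument for the augmentation sidesteps both issues: it uses exactly your geometric idea (shrinking discs onto clusters, pushing configurations towards $\partial_0M\times\{0\}$) but packages it as a local lifting-and-contractibility check rather than a global cofinality-and-stratification computation, which is why it can be carried out in a page. As written, then, your proposal is a different and harder route with the key step left open rather than a complete alternative proof.
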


Before turning to the proof of \cref{prop:hlan-fm}, we explain how it implies \cref{thm:johnson-filtration}.

\begin{proof}[Proof of \cref{thm:johnson-filtration}] An element $\phi\in\Diff_\partial(\Sigma)$ induces a commutative diagram
	\begin{equation}\label{equ:diagram-bdy-inc}\begin{tikzcd} \partial_0 \FM_k(\Sigma)_+ \wedge H\bZ \rar{\phi_*} \dar & \partial_0 \FM_k(\Sigma)_+ \wedge H\bZ \dar \\
		\FM_k(\Sigma)_+ \wedge H\bZ \rar{\phi_*} & \FM_k(\Sigma)_+ \wedge H\bZ \end{tikzcd}.\end{equation} Abbreviating $E_\Sigma\coloneqq\Emb_{\partial_0}(-,\Sigma)$, this agrees by \cref{prop:hlan-fm} with the square	
		\[\hspace{-0.5cm}\begin{tikzcd} \big((\mr{hLan}_{\iota_k}\, E_\Sigma)(\partial_0 \sqcup \bR^d_\ul{k})\sslash O(d)^k\big)_+ \wedge H\bZ \rar{\phi_*} \dar & \big((\mr{hLan}_{\iota_k}\, E_\Sigma)(\partial_0 \sqcup \bR^d_\ul{k})\sslash O(d)^k\big)_+ \wedge H\bZ \dar \\
		\big(\Emb(\partial_0 \sqcup \bR^d_\ul{k},\Sigma) \sslash O(d)^k\big)_+ \wedge H\bZ \rar{\phi_*} &\big(\Emb(\partial_0 \sqcup \bR^d_\ul{k},\Sigma) \sslash O(d)^k\big)_+ \wedge H\bZ.\end{tikzcd}
		\] 
		up to a zig-zag of weak equivalences of maps of squares.
As $(-)_+ \wedge H\bZ$ commutes with taking homotopy orbits and left Kan extensions, we conclude that the square \eqref{equ:diagram-bdy-inc} depends up to natural weak equivalences only on the endomorphism $\phi_*\colon \Emb_{\partial_0}(-,\Sigma)_+ \wedge H\bZ\ra\Emb_{\partial_0}(-,\Sigma)_+ \wedge H\bZ$ in $\cat{PSh}^{H\bZ}(\cat{Disc}_{\leq k-1})$ and moreover, as homotopy orbits and homotopy left Kan extensions preserve weak equivalences, only on its image in $\cat{PSh}^{H\bZ}(\cat{Disc}_{\leq k-1})^{\loc}$. Taking vertical cofibres in \eqref{equ:diagram-bdy-inc} and homotopy groups, we conclude that the map
	\begin{equation}\label{equ:map-on-homology-of-fm}\phi_* \colon H_*\big(\FM_k(\Sigma),\partial_0 \FM_k(\Sigma);\bZ\big) \lra H_*\big(\FM_k(\Sigma),\partial_0 \FM_k(\Sigma);\bZ\big)\end{equation}
	depends only on the image of $\phi$ under the map $\pi_0\,\Diff_\partial(\Sigma) \to \pi_0\,T^{H\bZ}_k \Emb_{\partial_0}(\Sigma,\Sigma)$. In particular, if $\phi$ lies in the kernel $T\mathcal{J}^{H\bZ}_{\nicefrac{\partial}{2}}(k)$ of this map, then  \eqref{equ:map-on-homology-of-fm} is the identity. Using excision as in \cite[Section 5.4.1]{KRWalgebraic} one see that the  macroscopic location map $\mu\colon (\FM_k(M),\partial_0 \FM_k(M))\ra (M^k,\Delta_k \cup A_k) $
is a homology isomorphism, so $\phi$ induces the identity on $H_*(M^k,\Delta_k \cup A_k;\bZ)$. But the subgroup of mapping classes with this property is exactly $\mathcal{J}(k)$, by \cite[Theorem A, Proposition 3.3]{Moriyama}.
\end{proof}

\begin{remark}It might be interesting to study the various filtrations of the mapping class group obtained by replacing $H\mathbb{Z}$ in the definition of $T\mathcal{J}_{\nicefrac{\partial}{2}}^{H\mathbb{Z}}(k)$ with $HR$ for any ring $R$, such as $\bQ$ or $\bF_p$.

As long as $R$ has characteristic $0$, the resulting filtration is contained in the Johnson filtration. This follows from the proof for $\bZ$ we gave above, together with the fact from \cite[Proposition 3.3]{Moriyama} that $H_*(\Sigma^k,\Delta_k \cup A_k;\bZ)$ is trivial if $*\neq k$ and free abelian for $*=k$. 
\end{remark}

\subsection{The proof of \cref{prop:hlan-fm}} It will be convenient for us to work with an explicit model for the homotopy left Kan extension as a bar construction, which we recall next.

\subsubsection{The enriched bar construction} Given enriched space-valued functors $F$ and $G$ on a topologically enriched category $\cat{C}$ where $F$ is contravariant and $G$ is covariant, the \emph{bar construction} $B_\bullet(G,\cat{C},F)$ is the semi-simplicial space given by
		\[[p] \longmapsto \bigsqcup_{c_0,\ldots,c_{p}} \Big(G(c_0) \times \prod_{i=1}^p \cat{C}(c_{i-1},c_{i}) \times F(c_p)\Big)\]
where the coproduct is taking over ordered collections $c_0,\ldots,c_p$ of objects in $\cat{C}$ and face maps are induced by the composition in $\cat{C}$ and the functoriality of $F$ and $G$. We denote the geometric realisation of this semi-simplicial space by omitting the $\bullet$-subscript. Since geometric realisations of levelwise weak equivalences of semi-simplicial spaces are weak equivalences, the object $B(G,\cat{C},F)$ is weakly homotopy invariant in  triples $(G,\cat{C},F)$, in the appropriate sense.
		
Given an enriched functor $\iota\colon \cat{C}\ra \cat{D}$ and $d\in\cat{D}$, the space $B\big(\cat{D}(d,\iota(-)),\cat{C},F\big)$ agrees, naturally in $d$, with the homotopy left Kan extension $\mr{hLan}_{\iota}\, F(d)$ (see e.g.\,\cite[Example 9.2.11]{Riehl}; the cofibrancy conditions are not relevant for us as we consider the bar-construction as a \emph{semi}-simplicial space and geometric realisations of \emph{semi}-simplicial spaces preserve weak equivalences). Moreover, if $F$ extends to a functor on $\cat{D}$, then there is a natural augmentation map 
\begin{equation}\label{equ:augmentation}B_\bullet\big(\cat{D}(d,\iota(-)),\cat{C},F\big)\lra F(d)\end{equation}
induced by composition and evaluation, which agrees upon geometric realisations with the canonical map $\mr{hLan}_{\iota}\, F(d)\ra F(d)$ (or rather, it provides a model thereof). 

In particular, using the notation introduced above, the left vertical map in the statement of \cref{prop:hlan-fm} is given by the map induced by \eqref{equ:augmentation} and taking homotopy orbits 
\begin{equation}\label{equ:left-kan-is-bar}
B\big(\Emb_{\partial_0}(\partial_0 \sqcup \bR^d_{\ul{k}},-),\cat{Disc}_{\partial_0,\leq k-1},\Emb_{\partial_0}(-,M)\big)\sslash O(d)^k\xlra{\epsilon} \Emb_{\partial_0}(\partial_0 \sqcup \bR^d_{\ul{k}},M)\sslash O(d)^k.
\end{equation}
To compare \eqref{equ:left-kan-is-bar} to the boundary inclusion $\partial_0\FM_k(M)\subset \FM_k(M)$, we first show the following.
	
	\begin{lemma}\label{lem:fmk-geomrel}The map induced by the augmentation 
	\[ B\big(\partial_0 \FM_k(-),\cat{Disc}_{\partial_0,\leq k-1},\Emb_{\partial_0}(-,M)\big) \lra \partial_0 \FM_k(M)\]
	is a weak equivalence.
\end{lemma}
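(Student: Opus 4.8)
The plan is to recognise the bar construction as an ordinary homotopy colimit and then compare it to $\partial_0\FM_k(M)$ by a nerve-theorem argument. Write $\cat{D}(M)$ for the topologically enriched over-category $\cat{Disc}_{\partial_0,\leq k-1}\downarrow M$, whose objects are pairs $(U,e)$ of an object $U\in\cat{Disc}_{\partial_0,\leq k-1}$ together with an embedding $e\colon U\hookrightarrow M$. Since $\Emb_{\partial_0}(-,M)$ is the restriction to $\cat{Disc}_{\partial_0,\leq k-1}$ of the functor corepresented by $M$ on $\cat{Man}_{\partial_0}$, the standard identity for bar constructions with a (co)representable in the last slot identifies $B\big(\partial_0\FM_k(-),\cat{Disc}_{\partial_0,\leq k-1},\Emb_{\partial_0}(-,M)\big)$ with $\hocolim_{(U,e)\in\cat{D}(M)}\partial_0\FM_k(U)$, and under this identification the augmentation becomes the canonical map
\[\hocolim_{(U,e)\in\cat{D}(M)}\partial_0\FM_k(U)\lra\partial_0\FM_k(M)\]
that sends a point of $\partial_0\FM_k(U)$ to its image under $e_*$. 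The first thing I would check is that this map is built from open inclusions that cover the target: every codimension-zero embedding $e\colon U\hookrightarrow M$ is open, and using that $\partial_0\FM_k(U)=\FM_k(U)\cap\partial_0\FM_k(M)$ one sees $e_*$ is an open embedding; moreover the images cover $\partial_0\FM_k(M)$, because a point $\xi\in\partial_0\FM_k(M)$ is by definition degenerate—either two of its macroscopic points collide, or one of them lies in $\partial_0M$—so its macroscopic points occupy at most $k-1$ disjoint balls in $\mr{int}(M)$ together with (possibly) an arbitrarily thin collar on $\partial_0M$, and any such union is an object of $\cat{Disc}_{\partial_0,\leq k-1}$ embedded in $M$ whose image under $e_*$ contains $\xi$.

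Granting this, I would conclude by a nerve-theorem computation of homotopy fibres. Since each structure map $\partial_0\FM_k(U)\to\partial_0\FM_k(M)$ of the diagram is an open inclusion into $\partial_0\FM_k(M)$, the homotopy fibre of the augmentation over a point $\xi\in\partial_0\FM_k(M)$ is weakly equivalent to the classifying space of the full subcategory $\cat{D}_\xi(M)\subset\cat{D}(M)$ on those $(U,e)$ with $\xi\in e_*\partial_0\FM_k(U)$; it therefore suffices to prove $\cat{D}_\xi(M)$ is weakly contractible. For this I would fix a Riemannian metric and, for small $\epsilon>0$, let $U_\epsilon\subset M$ be the $\epsilon$-collar on $\partial_0M$ together with the $\epsilon$-balls around the (at most $k-1$) macroscopic points of $\xi$ lying in $\mr{int}(M)$; these form a subcategory of $\cat{D}_\xi(M)$ isomorphic to the poset $(0,\epsilon_0]$, hence contractible, and this subcategory is homotopy-initial: for any object $(U,e)\in\cat{D}_\xi(M)$ the comma category of standard neighbourhoods mapping to it is again a contractible interval, because a morphism $(U_\epsilon,\mr{incl})\to(U,e)$ over $M$ is forced to be $e^{-1}|_{U_\epsilon}$ and so is unique—this is what makes the $\mr O(d)$-reparametrisation ambiguities in the morphism spaces of $\cat{Disc}_{\partial_0,\leq k-1}$ harmless here.

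The step I expect to be the main obstacle is the passage from "each $\partial_0\FM_k(U)\hookrightarrow\partial_0\FM_k(M)$ is an open inclusion and the images cover, with contractible point-local indexing categories" to "the augmentation is a weak equivalence"—that is, the interchange of homotopy fibres with the homotopy colimit over $\cat{D}(M)$. This is a genuine theorem rather than a formality: it is exactly the content of the nerve theorem for open covers whose point-local posets are directed (equivalently, a local-to-global statement of the kind underlying the descent results recalled in \cref{sec:formal-properties-mfd}\ref{descent}), and I would either cite such a nerve theorem directly or run the Mayer--Vietoris/complete-cover induction by hand after checking that $\{\,e_*\partial_0\FM_k(U)\mid(U,e)\in\cat{D}(M)\,\}$ is directed at every point of $\partial_0\FM_k(M)$. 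A secondary technical point requiring care is the identification of the enriched bar construction with $\hocolim_{\cat{D}(M)}\partial_0\FM_k(-)$ in the topologically enriched setting, which I would justify using the contractibility of spaces of collars and the equivalence between $\cat{Disc}_{\partial_0,\leq k-1}$ and its smaller models from \cref{sec:smaller-model}.
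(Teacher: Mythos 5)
The paper proves this lemma by showing directly that the augmentation from the bar construction to $\partial_0\FM_k(M)$ is a Serre microfibration with weakly contractible fibres and then applying Weiss's lemma \cite[Lemma 2.2]{WeissClassifying}. Your route is genuinely different in organization: you reinterpret the bar construction as a homotopy colimit over the over-category $\cat{D}(M)$ and want to invoke a nerve theorem for open covers. The geometric inputs you use (each $\partial_0\FM_k(U)\hookrightarrow\partial_0\FM_k(M)$ is an open inclusion, the images cover because $\mu(\xi)\in\Delta_k\cup A_k$ forces $\leq k-1$ distinct interior macroscopic points, and local directedness via $\epsilon$-collars and $\epsilon$-balls) are correct and match the geometric content of the paper's "contractible fibres" step. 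You are also right to flag the nerve-theorem interchange as the main obstacle.

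However, there is a genuine gap precisely there, and I think it is deeper than you suggest. The identification $B\big(\partial_0\FM_k(-),\cat{Disc}_{\partial_0,\leq k-1},\Emb_{\partial_0}(-,M)\big)\simeq\hocolim_{\cat{D}(M)}\partial_0\FM_k$ is only valid if $\cat{D}(M)$ is taken to be a \emph{topological} category (internal to $\cat{Top}$, with object space $\bigsqcup_U\Emb_{\partial_0}(U,M)$ carrying the $C^\infty$-topology), not a topologically enriched category with a discrete object set. Indeed, if you forget the topology on the set of objects of $\cat{D}(M)$, the hocolim becomes a large disjoint union indexed by the \emph{discrete set} $\bigsqcup_U\Emb_{\partial_0}(U,M)^\delta$ and is not weakly equivalent to the enriched bar construction (consider already the one-object toy case with trivial hom-space, where the bar construction gives $G(\ast)\times F(\ast)$ but the over-category hocolim gives $G(\ast)\times F(\ast)^\delta$). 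So the nerve-theorem reference you want must apply to a diagram of open subsets indexed by a \emph{topological} poset whose object space is topologized; the standard statements (Dugger--Isaksen, the $\cJ_\infty$-descent results recalled in \cref{sec:formal-properties-mfd}\ref{descent}, or complete-cover/Mayer--Vietoris inductions) are all for discrete posets and do not apply off the shelf. Nor does passing to the poset $\cP$ of images $e(U)\subset M$ help directly, since the "fibre" of $\cat{D}(M)\to\cP$ over a fixed image is a space of parametrisations (roughly a torsor under $\Diff_{\partial_0}$ of that open set), not contractible, so the bar construction is not equivalent to $\hocolim_{\cP^\delta}$. The contractibility of spaces of collars and the equivalences $\cat{Disc}^{\bigcdot}\to\cat{Disc}^{\mathrm{sk}}\to\cat{Disc}_{\partial_0}$ reduce the index category but leave the object and morphism spaces topologized, so they do not close this gap. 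The cleanest way to prove the topological nerve theorem you need in this generality is to work directly with the simplicial space $B_\bullet$ and run the microfibration-plus-contractible-fibres argument — which is exactly what the paper does. So your reorganization is sound in spirit, but the "interchange" step is not a citation; it is the lemma.
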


\begin{proof}[Proof sketch] The strategy is to show that this map is a Serre microfibration and has weakly contractible fibres, which implies the statement by a lemma of Weiss \cite[Lemma 2.2]{WeissClassifying}. This is a standard argument, so we will explain the idea somewhat informally and avoid spelling out lengthy but routine technical details that are similar to e.g.~\cite[Section 4]{KKM}.
	
	To verify that the map is a Serre microfibration the task is to show that in a commutative diagram 
	\[\begin{tikzcd}[ar symbol/.style = {draw=none,"\textstyle#1" description,sloped},
  subset/.style = {ar symbol={\subset}},row sep=1cm,column sep=0.3cm] &D^i\times{\{0\}} \dar \rar & {B(\partial_0 \FM_k(-),\cat{Disc}_{\partial_0,\leq k-1},\Emb_{\partial_0}(-,M))} \arrow[d] \\
		D^i \times \left[0,\varepsilon\right]\arrow[urr,dashed,crossing over, end anchor={south west}]\arrow[r,subset]&D^i \times [0,1] \rar & \partial_0\FM_k(M)\end{tikzcd}\]
		whose solid arrows are given, there exists an $\varepsilon>0$ and dashed lift. To see why this holds, it is helpful to think of the space $B(\partial_0 \FM_k(-),\cat{Disc}_{\partial_0,\leq k-1},\Emb_{\partial_0}(-,M))$ as the subspace of $\partial_0 \FM_k(M) \times B(\ast,\cat{Disc}_{\partial_0,\leq k-1},\Emb_{\partial_0}(-,M))$
	consisting of pairs $(x,[\vec{e},\vec{t}])$ of element $x$ in $\partial_0\FM_k(M)$ and an equivalence class of a collection $\vec{e}$ of $p+1$ levels of nested embedded discs in $M$ with weight $\vec{t} \in \Delta^p$. The pair $[\vec{e},\vec{t}]$ must have the property that the image $\mu(x)$ of $x$ under the macroscopic location map is contained in the interior of the deepest level (see \cref{fig:fmk} for an example) and the equivalence relation is that if a coordinate of $\vec{t} \in \Delta^p = \{(t_0,\ldots,t_p) \in [0,1]^{p+1} \mid t_0+\cdots+t_p = 1\}$ is $0$ then we may forget it and the corresponding level of discs. 
	
	In these terms, the right vertical map in the diagram sends $(x,[\vec{e},\vec{t}])$ to $x$. The map $D^i \to B(\partial_0 \FM_k(-),\cat{Disc}_{\partial_0,\leq k},\Emb_{\partial_0}(-,M))$ provides for each $s \in D^i$ a configuration $x(s) \in \partial_0 \FM_k(M)$ together with nested embedded  discs and weights $[\vec{\epsilon}(s),\vec{t}(s)]$. The map $D^i \times [0,1] \to \partial_0 \FM_k(M)$ defines a homotopy $x_t(s)$ with $t \in [0,1]$ starting at $x(s)$. If $t$ is small enough then this remains within the deepest level of the discs for $x(s,0)$, and by compactness of $D^i$ we find a single $\varepsilon>0$ such that this is the case for all $(s,t)$ with $t \leq \varepsilon$. The dashed lift is then given by sending $(s,t)$ to $(x(s,t),[\vec{e}(s),\vec{t}(s)])$.
	
	To see that the fibre over $x \in \partial_0 \FM_k(M)$ is weakly contractible, i.e.\,any map from $S^i$ to the fibre extends over $D^{i+1}$, we observe that given an equivalence class $[\vec{e},\vec{t}]$ represented by a family of nested embedded discs in $M$ with weights, whose deepest level contains $x$, we find a smaller collection of $\leq (k-1)$ discs around points in the macroscopic image $\mu(x)$ of $x$ and contained in the deepest level. By compactness we can find a single such small collection which works for all images of $s \in S^i$. Adding this collection and transferring all weight to this collection provides an extension to $D^{i+1}$.
\end{proof}

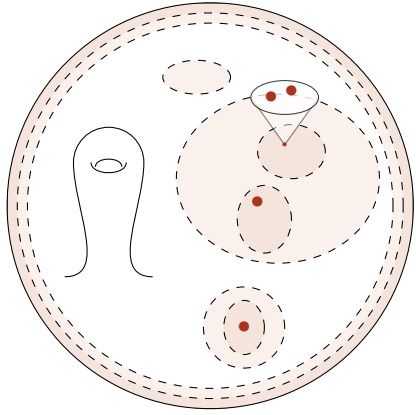
\begin{figure}
	\centering
	\begin{tikzpicture}[scale=.9]
		
	\begin{scope}
		\clip (0,0) circle (3cm);
		\draw [fill=Mahogany!10!white] (-3,-3) rectangle (3,3);
		\draw [fill=Mahogany!5!white,dashed] (0,0) circle (2.85cm);
		\draw [fill=white,dashed] (0,0) circle (2.7cm);
	
	\begin{scope}[yshift=-.4cm,xshift=-.2cm,scale=1.3]
		\draw (-1.5,-.5) to[out=0,in=-90] (-1.4,0.8) to[out=90,in=180] (-1,1.2) to[out=0,in=90] (-.6,0.8) to[out=-90,in=180] (-.5,-.5);	
		\draw (-1.2,0.8) to[out=-90,in=-90] (-0.8,0.8);
		\draw (-1.15,0.75) to[out=90,in=90] (-0.85,0.75);
	\end{scope}
	
	\draw [dashed,fill=Mahogany!5!white] (-0.2,1.9) ellipse (0.5cm and 0.25cm);
	\draw [dashed,fill=Mahogany!5!white] (1,0.4) ellipse (1.5cm and 1.25cm);
	\draw [dashed,fill=Mahogany!5!white] (.5,-1.8) ellipse (0.6cm and .6cm);	
	\draw [dashed,fill=Mahogany!10!white] (1.2,0.8) ellipse (0.5cm and 0.4cm);
	\draw [dashed,fill=Mahogany!10!white] (0.8,-0.2) ellipse (0.4cm and .5cm);
	\draw [dashed,fill=Mahogany!10!white] (.5,-1.8) ellipse (0.3cm and .4cm);		

	\draw [fill=white,opacity=.5] (.6,1.6) -- (1.1,.9) -- (1.6,1.6);
	\draw [fill=white,opacity=.75] (1.1,1.6) ellipse (0.5cm and .25cm);	
	\node at (.9,1.6) [Mahogany] {$\bullet$};
	\node at (1.2,1.7) [Mahogany] {$\bullet$};
	
	\node at (.5,-1.8) [Mahogany] {$\bullet$};
	\node at (1.1,.9) [Mahogany] {$\boldsymbol{\cdot}$};
	\node at (.7,.05) [Mahogany] {$\bullet$};
	
	\end{scope}
	
	\draw (0,0) circle (3cm);
	\end{tikzpicture}
	\caption{An element of $B(\partial_0 \FM_4(-),\cat{Disc}_{\partial_0,\leq 3},\mr{Emb}_{\partial_0}(-,M)$ for $\partial_0 M = \partial M$, consisting of a configuration $x \in \partial_0 \FM_4(M)$ where two points are infinitesimally close, so that its macroscopic image $\mu(x)$ consists of three points, and two levels of discs and collars indicated by the orange and light-orange coloured regions. We suppressed the weights $(t_0,t_1) \in \Delta^1$.}
	\label{fig:fmk}
\end{figure}

\begin{proof}[Proof of \cref{prop:hlan-fm}]
For brevity, we abbreviate
\[\begin{gathered}\cat{D}_{k}\coloneqq \cat{Disk}_{\partial_0,\le k}, \quad E_M\coloneqq\Emb_{\partial_0}(-,M),\quad E^{\partial_0 \sqcup \bR^d_\ul{k}}\coloneqq\Emb_{\partial_0}(\partial_0 \sqcup \bR^d_\ul{k},-),\\
\FM_k = \FM_k(-), \quad \partial_0 \FM_k = \partial_0 \FM_k(-).\end{gathered}\]
We claim that the commutative diagram
\[
\begin{tikzcd}[column sep=0.4cm,row sep=0.3cm]
B\big(E^{\partial_0 \sqcup \bR^d_\ul{k}},\cat{D}_{k-1},E_M\big){\sslash} O(d)^k\arrow[dd]\rar{\circled{1}}&B\big(\FM_k,\cat{D}_{ k-1},E_M\big)\arrow[dd]&B\big(\partial_0\FM_k,\cat{D}_{ k-1},E_M\big)\lar[swap]{\circled{3}}\rar{\circled{4}}\arrow[d]&\partial_0\FM_k(M)\arrow[d,equal]\\
&&\partial_0\FM_k(M)\arrow[d] \arrow[equal]{r}&\partial_0\FM_k(M)\arrow[d]\\
\Emb_{\partial_0}(\partial_0 \sqcup \bR^d_\ul{k},M){\sslash} O(d)^k\rar[swap]{\circled{2}}&\FM_k(M)\arrow[r,equal]&\FM_k(M)\arrow[r,equal]&\FM_k(M)
\end{tikzcd}
\]
provides a zig-zag as claimed. Here all vertical arrows are induced by the augmentation \eqref{equ:augmentation} or the inclusion $\partial_0\FM_k\subset \FM_k$. $\circled{1}$ and $\circled{2}$ are induced by the composition
\begin{equation}\label{equ:comparison-to-fm}\Emb_{\partial_0}(\partial_0 \sqcup \bR^d_\ul{k},-)\ra \Emb(\bR^d_\ul{k},-)\ra \Emb(\ul{k},-)\ra \FM_k(-)\end{equation}
induced by restriction and inclusion, $\circled{3}$ is induced by inclusion, and $\circled{4}$ is another instance of \eqref{equ:augmentation}.  As the diagram is natural in $M$ and the leftmost vertical map agrees with the left vertical map in the statement by the discussion around \eqref{equ:left-kan-is-bar}, it remains show that $\circled{1}$--$\circled{4}$ are weak equivalences. 

The map $\circled{1}$ factors as a composition
\[B\big(E^{\partial_0 \sqcup \bR^d_\ul{k}},\cat{D}_{k-1},E_M\big)\sslash O(d)^k\ra B\big(E^{\partial_0 \sqcup \bR^d_\ul{k}}\sslash O(d)^k,\cat{D}_{k-1},E_M\big)\ra B\big(\FM_k,\cat{D}_{ k-1},E_M\big)\]
whose first map is a weak equivalence since left Kan extensions commute with homotopy orbits. To show that the second map in this composition (and also the map $\circled{2}$) is a weak equivalence, we argue that the composition \eqref{equ:comparison-to-fm} consists of weak equivalences upon applying $(-)\sslash O(d)^k$ to the first two spaces. For the first map this follows by shrinking the collar, for the second map it holds because the derivative $\Emb(\bR^d_\ul{k},N)\ra \ul{k}\times \mathrm{Fr}(N)$ is a weak equivalence for any manifold $N$ where $\mathrm{Fr}(N)$ is the frame bundle, and for the third map it is clear. 

The map $\circled{3}$ is a weak equivalence because $\partial_0 \FM_k(-) \subset \FM_k(-)$ is a weak equivalence when evaluated on objects $U$ of $\cat{D}_{\leq k-1}$. Indeed, if $U$ consists of a collar and $\ell \leq k-1$ discs we have
\[\FM_k(U) \cong \bigsqcup_{n_0+\ldots+n_\ell = k} \FM_{n_0}(\partial_0 M \times [0,1)) \times \FM_{n_1}(\bR^d) \times \cdots \times \FM_{n_\ell}(\bR^d)\]
and $\partial_0 \FM_k(U)$ is the union of such terms where one $\FM_{n_i}$ is replaced by $\partial_0 \FM_{n_i}$. By the pigeonhole principle we have $n_0 \geq 1$ or $n_i \geq 2$ for some $1 \leq i \leq \ell$, so it suffices to observe that in these cases $\partial_0 \FM_{n_0}(\partial M \times [0,1)) \hookrightarrow \FM_{n_0}(M \times [0,1))$ or $\partial_0 \FM_{n_i}(\bR^d) \hookrightarrow \FM_{n_i}(\bR^d)$ are inclusions of deformation retracts, either by modifying configurations such that one has a macroscopic location in $\partial_0 M \times \{0\} \subset \partial_0 M \times [0,1)$ or such that all have macroscopic location at $\{0\} \in \bR^d$. Finally, $\circled{4}$ is a weak equivalence by \cref{lem:fmk-geomrel}.
\end{proof}

\bibliographystyle{amsalpha}
\bibliography{refs2}

\vspace{+0.2cm}
\end{document}